\newtheorem{thm}[equation]{Theorem}
\newtheorem*{thm*}{Theorem}
\newtheorem*{lem*}{Lemma}
\newtheorem{thmA}{Theorem}
\newtheorem{lem}[equation]{Lemma}
\newtheorem{prop}[equation]{Proposition}
\newtheorem*{prop*}{Proposition}
\newtheorem{cor}[equation]{Corollary}
\newtheorem*{conj}{Conjecture}
\newtheorem*{optconj}{Optimistic conjecture}
\newtheorem*{question}{Question}
\theoremstyle{definition}
\newtheorem*{defn}{Definition}
\newtheorem{rmk}[equation]{Remark}
\numberwithin{equation}{section}
\DeclareMathOperator{\id}{id}
\DeclareMathOperator{\img}{im}
\DeclareMathOperator{\vol}{vol}
\DeclareMathOperator{\Dil}{Dil}
\DeclareMathOperator{\Map}{Map}
\DeclareMathOperator{\Lip}{Lip}
\DeclareMathOperator{\Hom}{Hom}
\DeclareMathOperator{\Hopf}{Hopf}
\newcommand{\ph}{\varphi}
\newcommand{\epsi}{\varepsilon}
\newcommand{\s}{\mathbf{S}}
\begin{document}
\title{Scalable spaces}
\author[A.~Berdnikov]{Aleksandr Berdnikov}
\address[A.~Berdnikov]{Department of Mathematics, Massachussetts Institute of Technology, Cambridge, MA, United States}
\email{aberdnik@mit.edu}
\author[F.~Manin]{Fedor Manin}
\address[F.~Manin]{Department of Mathematics, University of California, Santa Barbara, CA, United States}
\email{manin@math.ucsb.edu}
\begin{abstract}
  \emph{Scalable spaces} are simply connected compact manifolds or finite complexes whose real cohomology algebra embeds in their algebra of (flat) differential forms.  This is a rational homotopy invariant property and all scalable spaces are formal; indeed, scalability can be thought of as a metric version of formality.  They are also characterized by particularly nice behavior from the point of view of quantitative homotopy theory.  Among other results, we show that spaces which are formal but not scalable provide counterexamples to Gromov's long-standing conjecture on distortion in higher homotopy groups.
\end{abstract}
\maketitle

\section{Introduction}

Starting with the 1978 paper \cite{GrHED} and continuing in the 1990s with \cite[Ch.~7]{GrMS} and \cite{GrQHT}, Gromov has promoted the idea that the geometry of maps between simply connected spaces is governed by Sullivan's minimal models in rational homotopy theory and maps between them.    In this paper, we both show that this intuition is true in the strongest possible sense for a large class of ``scalable'' spaces, which includes spheres, complex projective spaces, Lie groups and Grassmannians, and give examples showing that the general situation is more complicated.  In particular, we disprove one of the main conjectures from \cite{GrQHT}.

Scalable spaces are closely related to formal spaces, a notion introduced by Sullivan in \cite{SulLong} and elsewhere.  A formal space is one whose rational homotopy type is a ``formal consequence'' of its rational cohomology ring; that is, all higher-order rational cohomology operations are trivial.  However, Sullivan gives two other characterizations: one in terms of quasi-isomorphisms (maps preserving cohomology) and another in terms of rational self-maps.  Scalable spaces satisfy two analogous equivalent conditions, but with a metric flavor.  In addition, they satisfy two equivalent conditions regarding Lipschitz homotopies.  The precise statement of these four equivalent conditions is given in Theorem \ref{tfae}.

\subsection{Growth, distortion, Lipschitz homotopy}
Let $X$ and $Y$ be sufficiently nice compact metric spaces, for example Riemannian manifolds or piecewise Riemannian simplicial complexes.  In \cite{GrQHT}, Gromov outlines a number of homotopical invariants concerning the asymptotic behavior of the Lipschitz constant as a functional on the mapping space $\Map(X,Y)$.  The Lipschitz constant tells us the scale at which the map becomes homotopically trivial, and therefore is a good measure of homotopical information.  Besides the inherent appeal of this program, it is important for achieving an understanding of broader questions in quantitative geometric topology, for example the questions regarding cobordism theory studied in \cite{FWPNAS} and \cite{CDMW}.

In the past decade, a fair amount of progress has been made on this program; see \cite{FWPNAS,CDMW,CMW,Guth,PCDF,IRMC,zoo,Berd}.

A convenient language for discussing asymptotics is ``big-O notation'', reviewed below:

\begin{itemize}
    \item We write $f=O(g)$ if the function $f$ grows asymptotically no faster than $g$, i.e.~$f$ is eventually bounded by the function $Cg$ for {\itshape some} $C>0$.
    \item We write $f=o(g)$ if $f$ grows asymptotically strictly more slowly than $g$, i.e.~for {\it every} $c>0$, $f$ is eventually bounded by $cf$.
    \item We write $f=\Omega(g)$ if $f$ grows asymptotically no slower than $g$, i.e.~$g=O(f)$.
    \item We write $f=\Theta (g)$ if $f$ and $g$ have the same asymptotic growth, i.e.~$f=O(g)$ and $g=O(f)$ both hold.
\end{itemize}

The most basic asymptotic invariant of $\Map(X,Y)$ is \emph{growth}: how many elements of the set of homotopy classes $[X,Y]$ have representatives with Lipschitz constant $\leq L$?  This line of inquiry goes back to \cite{GrHED}, in which Gromov proved the following:
\begin{thm*}
  For a simply connected compact Riemannian manifold $Y$, the growth of $\pi_n(Y)$ is at most polynomial in $L$.
\end{thm*}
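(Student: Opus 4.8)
The plan is to bound directly the set $\mathcal L_L\subseteq\pi_n(Y)=[S^n,Y]$ of homotopy classes admitting an $L$-Lipschitz representative, by showing that it injects with bounded fibers into a fixed lattice, inside a Euclidean ball whose radius is polynomial in $L$; a lattice-point count then finishes the job.

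First I would reduce. Since $Y$ is a simply connected compact manifold it has the homotopy type of a finite complex, so by Serre's theorem $\pi_n(Y)$ is finitely generated; write $d=\dim_{\mathbb Q}\bigl(\pi_n(Y)\otimes\mathbb Q\bigr)$ and let $T$ be the order of the torsion subgroup. The quotient $\pi_n(Y)\to\Lambda:=\pi_n(Y)/\mathrm{tors}$ realizes $\Lambda\cong\mathbb Z^d$ as a full lattice in $\Lambda_{\mathbb R}:=\pi_n(Y)\otimes\mathbb R\cong\mathbb R^d$ and has fibers of size $\le T$ (if $d=0$ then $\pi_n(Y)$ is finite and there is nothing to prove). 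So it suffices to bound the image of $\mathcal L_L$ in $\Lambda$; and for that it is enough to show that this image lies in the ball of radius $CL^N$ for a fixed norm on $\Lambda_{\mathbb R}$, with $C,N$ depending only on $Y$ and $n$, since then $\Lambda$ contains $O\bigl(L^{Nd}\bigr)$ points in that ball and hence $\#\mathcal L_L=O\bigl(L^{Nd}\bigr)$.

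The heart of the matter is the single estimate: if $f\colon S^n\to Y$ is $L$-Lipschitz, then the image of $[f]$ in $\Lambda_{\mathbb R}$ has norm $O(L^N)$. Here I would invoke Sullivan's rational homotopy theory. Fixing a minimal model $(\Lambda V,d)\xrightarrow{\ \simeq\ }\Omega^*(Y)$ of the de Rham (or $\mathrm{PL}$ de Rham) algebra, one has $\pi_n(Y)\otimes\mathbb Q\cong(V^n)^{\vee}$, and it is classical (the Hopf invariant formula of Whitehead and its higher analogues, due to Haefliger and Sullivan) that for each generator $x\in V^n$ the pairing $\langle[f],x\rangle$ is computed by an iterated-integral formula of a fixed shape: pull a fixed finite collection of closed forms $\omega_1,\dots,\omega_r\in\Omega^{<n}(Y)$ back to $S^n$ along $f$, where they become exact; choose primitives; form the combination of wedge products dictated by the differential of the minimal model up to degree $n$; and integrate over $S^n$. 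The answer is independent of the choice of primitives because $H^{<n}(S^n)=0$ and $[S^n]$ is a cycle. The quantitative input is that $\|f^*\omega\|_{C^0(S^n)}\le\|\omega\|_{C^0(Y)}\,L^{\deg\omega}$, so each $f^*\omega_i$ has $L^2$-norm $O\bigl(L^{\deg\omega_i}\bigr)$; taking primitives through the (fixed) Green operator of the round metric on $S^n$ costs only a constant in $L^2$, and via Sobolev embedding improves integrability; the final integral of a fixed finite sum of products of these forms and primitives, of total degree $n$, is then bounded by $O(L^N)$ via Hölder. Only the \emph{existence} of such a polynomial bound is needed, not the optimal exponent.

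The main obstacle is precisely this last step: assembling the higher Hopf-type period formulas cleanly, and pushing the Green-operator and Sobolev estimates through an iterated integral with several nested primitives so that one uniform polynomial bound emerges. A reader who prefers to avoid minimal models can instead run an induction up the Postnikov tower of $Y$, bounding at each stage the relevant "obstruction" or "degree" of $f$ by a filling/volume estimate in the spirit of the elementary fact that a degree-$e$ self-map of $S^n$ requires Lipschitz constant $\gtrsim e^{1/n}$; but then one must propagate metrics (or controlled mapping-space models) up the tower, which is its own bookkeeping burden, so the minimal-model route above is the one I would pursue.
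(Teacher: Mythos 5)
Your proposal is correct and follows essentially the same route the paper sketches for Gromov's theorem: finite generation of $\pi_n(Y)$, plus the fact that the real homotopy invariants dual to the degree-$n$ indecomposables of the minimal model are computed by pulling back forms along $f$, taking wedges and antidifferentials, and integrating over $S^n$, with each step polynomially bounded in $\Lip(f)$, followed by a lattice-point count. The only stylistic difference is in the analytic bookkeeping for the nested primitives: the paper's machinery (the coisoperimetric inequality, Lemma \ref{coIP}) takes primitives with controlled \emph{sup}-norm, which makes the iterated wedge-and-antidifferential estimates immediate, whereas your $L^2$/Green-operator-plus-Sobolev route requires exactly the integrability juggling you flag as the main obstacle.
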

The proof derives from rational homotopy theory.  Sullivan, following K.-T.\ Chen, had showed that all real-valued invariants $\pi_n(Y) \to \mathbb{R}$ could be computed by pulling back differential forms along a map $f:S^n \to Y$, taking wedges and antidifferentials, and finally integrating a resulting $n$-form over the sphere.  Gromov remarked that all steps of this procedure could be bounded polynomially in terms of the Lipschitz constant of the original map.

In \cite{GrQHT}, Gromov conjectured that the upper bounds on the homotopical complexity of $L$-Lipschitz maps obtained in this way are sharp.  To make this precise, it is natural to define the \emph{distortion} of an element $\alpha \in \pi_n(Y)$ to be
\[\delta_\alpha(L)=\max\{k: k\alpha\text{ has an $L$-Lipschitz representative}\}\footnote{This is essentially the inverse function of the notion used in \cite{GrQHT}, but accords with the notion of distortion used in geometric group theory.}.\]
Then Gromov's conjecture would imply that the distortion of any element is $\Theta(L^r)$ where $r$ is an integer.  Moreover, an easily stated consequence is:
\begin{conj}[Gromov]
  The distortion of an element $\alpha \in \pi_n(Y)$ is $\Theta(L^n)$ if and only if $\alpha$ has nontrivial image under the rational Hurewicz homomorphism, and $\Omega(L^{n+1})$ otherwise.
\end{conj}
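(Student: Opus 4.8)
The plan is to isolate the conjecture's elementary content from a single hard construction; everything except that construction is standard.

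\emph{The easy inequalities.} Fix an $L$-Lipschitz map $f\colon S^n\to Y$ representing $k\alpha$. If the rational Hurewicz image of $\alpha$ is nonzero, choose a closed $n$-form $\omega$ on $Y$ with $c:=\int_{S^n}g^*\omega\neq 0$ for a fixed representative $g$ of $\alpha$; then $kc=\int_{S^n}f^*\omega$, while $f^*\omega$ is pointwise bounded by $\lVert\omega\rVert_{C^0}L^n$, so $\lvert k\rvert=O(L^n)$ and hence $\delta_\alpha(L)=O(L^n)$. In the other direction every nonzero $\alpha$ satisfies $\delta_\alpha(L)=\Omega(L^n)$: pack $\sim L^n$ disjoint balls of radius $\sim L^{-1}$ into $S^n$, place a rescaled copy of a fixed representative of $\alpha$ in each, and collapse the complement to a point, obtaining an $O(L)$-Lipschitz map representing $\Omega(L^n)\cdot\alpha$. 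Thus Hurewicz-nontrivial elements have distortion exactly $\Theta(L^n)$; and since $\Omega(L^{n+1})$ already excludes $\Theta(L^n)$, both the ``only if'' direction and the ``$\Omega(L^{n+1})$ otherwise'' clause reduce to the single statement: if the rational Hurewicz image of $\alpha\in\pi_n(Y)$ vanishes, then $k\alpha$ has a representative of Lipschitz constant $O(k^{1/(n+1)})$.

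\emph{The construction.} I would prove this last statement through the minimal model $(\Lambda V,d)$ of $Y$. A rationally-Hurewicz-trivial $\alpha$ is dual to a generator $y\in V^n$ with $0\neq dy$ a decomposable cocycle that is cohomologous to zero; in the basic case $dy=ab$ with $a,b$ closed forms of degrees $p,q$, where $p+q=n+1$ and $ab$ is nullcohomologous. By the Chen--Sullivan recipe the invariant detecting $\alpha$ on a map $f\colon S^n\to Y$ is then a quantity of Massey-product type, built from $a$, $b$ and a primitive of $f^*(ab)$ on $S^n$, and it is \emph{bilinear} in $a$ and $b$. The efficient representatives should therefore come from post-composing a fixed representative of $\alpha$ with a self-map $\phi\colon Y\to Y$ realizing $a\mapsto ra$, $b\mapsto sb$, $y\mapsto rs\,y$ on the model: such a $\phi$ can plausibly be built supported near the relevant skeleta with Lipschitz constant $O(\max(r^{1/p},s^{1/q}))$ --- morally ``$r$ disjoint copies'' along the $p$-dimensional direction and ``$s$'' along the $q$-dimensional one --- producing a representative of $(rs)\alpha$. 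Setting $rs=k$ and balancing $r^{1/p}=s^{1/q}$ yields Lipschitz constant $O(k^{1/(p+q)})=O(k^{1/(n+1)})$. The guiding example is $Y=S^2$, $\alpha=\eta$, $n=3$: a degree-$m$ self-map of $S^2$ costs only $O(\sqrt{m})$, and since the Hopf invariant is quadratic under self-maps of the target, post-composition turns a representative of $\eta$ into one of $m^2\eta$, so $k\eta$ is realized with Lipschitz constant $O(k^{1/4})$.

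\emph{The main obstacle.} This construction is the crux, and I expect it to be exactly the point at which the conjecture as stated breaks down. Making the heuristic rigorous forces one to represent the entire real cohomology algebra of $Y$ by flat forms with the expected Lipschitz behavior, and to produce the ``multiply $a$ by $r$'' self-maps \emph{compatibly} with a chosen nullhomotopy of $ab$; for a merely formal $Y$ the relation $dy=ab$ may be carried by a twist that makes it impossible to scale the $p$- and $q$-directions independently and cheaply, so no such $\phi$ exists and the naive disjoint-copies map is the best available. This is precisely the scalability condition in disguise. So the realistic outcome is: prove the conjecture for spaces whose cohomology embeds in flat forms, and, for formal non-scalable $Y$, exhibit a rationally-Hurewicz-trivial $\alpha$ with $\delta_\alpha(L)=o(L^{n+1})$ --- possibly even $\Theta(L^n)$ --- together with a matching lower bound showing that disjoint copies cannot be beaten for it. That lower bound is where the real work lies.
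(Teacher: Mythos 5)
You were asked to prove a statement that the paper itself does not prove: this is Gromov's conjecture, which the paper establishes only for scalable spaces (Theorem \ref{props}(d)) and \emph{disproves} in general (Theorem \ref{grconj}). Your proposal gets this exactly right, and its structure mirrors the paper's treatment. Your ``easy inequalities'' are Propositions \ref{hur-bound} and \ref{SntoSn}; your reduction of the remaining content to ``Hurewicz-trivial $\alpha$ admits representatives of $k\alpha$ with Lipschitz constant $O(k^{1/(n+1)})$'' is correct; your proposed mechanism --- postcomposition with self-maps that scale cohomology --- is precisely condition (ii) of Theorem \ref{tfae} (there the map $r_p$ multiplies $H^j$ by $p^j$, hence multiplies the pairing with a depth-one degree-$n$ generator by $p^{n+1}$ at Lipschitz cost $O(p)$, which is your $r$--$s$ balancing computation in disguise); and your diagnosis that this mechanism is ``scalability in disguise'' and fails for some formal spaces is the content of Theorems \ref{subalg} and \ref{grconj}.

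One correction on where the real work lies. You propose that the counterexample should come with ``a matching lower bound showing that disjoint copies cannot be beaten,'' i.e.\ a distortion upper bound of $O(L^n)$. The paper does not achieve this and explicitly leaves the exact distortion of its example open: Theorem \ref{grconj} shows only that the distortion is $o(L^{n+1})$, which already contradicts the conjectured $\Omega(L^{n+1})$. Moreover the proof is not an analysis of any particular construction but a compactness argument: assuming a sequence of $CN$-Lipschitz representatives of $N^{n+1}\alpha$, one rescales the pulled-back flat forms by the powers of $N$ predicted by the conjecture, extracts a weak$^\flat$ limit, and obtains at some point a pointwise embedding of the cohomology algebra into an exterior algebra $\bigwedge^*\mathbb{R}^N$, which Theorem \ref{subalg} forbids. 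So the crux is an obstruction to the $\Omega(L^{n+1})$ bound rather than a sharp determination of the exponent; obtaining $\Theta(L^n)$ (or any exact answer) for such classes remains open.
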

The ``if'' here is easy to see using a degree argument (see Propositions~\ref{hur-bound}) and~\ref{SntoSn}; the ``only if'' has been open until now, and Gromov noted that even a proof of the first part would be remarkable.

Finally, Gromov also defined a related relative invariant: given two homotopic $L$-Lipschitz maps, we can ask for bounds on the Lipschitz constants of the intermediate maps of a homotopy.  For example, given nice compact spaces $X$ and $Y$, when can we expect two homotopic $L$-Lipschitz maps $X \to Y$ to be homotopic through $KL$-Lipschitz maps, for some constant $K=K(X,Y)$?  Ferry and Weinberger noted that for the applications they were considering, it was more useful to also bound the Lipschitz constant in the time direction.  Hence:
\begin{question}
  For what spaces $Y$ is there always a constant $K=K(X,Y)$, for any compact metric simplicial complex $X$, such that any two homotopic $L$-Lipschitz maps $X \to Y$ have a $K(L+1)$-Lipschitz homotopy?
\end{question}
Ferry and Weinberger characterized spaces satisfying a more restrictive condition, where the constant only depends on the dimension $d$ of $X$.  In that case, all homotopy groups of $Y$ must be finite.  On the other hand, it was shown in \cite{CDMW} that spaces satisfying the above condition include those that are rationally products of Eilenberg--MacLane spaces, including for example odd-dimensional spheres.  This paper also includes the first example of a target space $Y$ which does not have this property.  Moreover, in \cite{CMW} it was shown that even even-dimensional spheres do not have the property as stated; to include them in our class, we must consider only nullhomotopic maps.

A number of weaker, polynomial bounds on sizes of homotopies and nullhomotopies appear in \cite{CMW} and \cite{PCDF}, but before this paper, linearity had only been additionally proven in the case of maps $S^3 \to S^2$, by the first author \cite{Berd}.

The various quantities described here are intimately connected.  For example, in \cite{CDMW}, it is shown that if one attaches a cell along an element of $\pi_n(Y)$ with sufficiently large distortion, then the resulting complex is forced to have nonlinear nullhomotopies.  Conversely, the argument of \cite{zoo} describing the growth of $[X,Y]$ for certain $X$ and $Y$ relies on estimates on the sizes of Lipschitz homotopies.

\subsection{Main results}
The main result of this paper defines a new class of spaces in which the answers to these questions are particularly nice.
\begin{thmA} \label{tfae}
  The following are equivalent for a \emph{formal} simply connected finite complex $Y$:
  \begin{enumerate}[(i)]
  \item There is a homomorphism $H^*(Y) \to \Omega_\flat^*Y$ of differential graded algebras which sends each cohomology class to a representative of that class.  Here $\Omega_\flat^*Y$ denotes the flat forms, an algebra of not-necessarily-smooth differential forms studied by Whitney.
  \item There is a constant $C(Y)$ and infinitely many (indeed, a logarithmically dense set of) $p \in \mathbb{N}$ such that there is a $C(Y)(p+1)$-Lipschitz self-map which induces multiplication by $p^n$ on $H^n(Y;\mathbb{R})$.
  \item For all finite simplicial complexes $X$, nullhomotopic $L$-Lipschitz maps $X \to Y$ have $C(X,Y)(L+1)$-Lipschitz nullhomotopies.
  \item For all $n<\dim Y$, homotopic $L$-Lipschitz maps $S^n \to Y$ have $C(Y)(L+1)$-Lipschitz homotopies.
  \end{enumerate}
\end{thmA}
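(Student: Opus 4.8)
The plan is to establish the implications $(i)\Rightarrow(iii)\Rightarrow(iv)\Rightarrow(i)$ together with the equivalence $(i)\Leftrightarrow(ii)$, the latter playing the role of a ``metric formality'' counterpart of Sullivan's quasi-isomorphism and self-map characterizations of formality. (The exact routing among the three metric conditions $(ii)$--$(iv)$ can be arranged in a few ways; the content is the same.) The unifying idea is the quantitative rational homotopy dictionary: formality of $Y$ already supplies, for each scalar $\lambda$, a DGA endomorphism $\psi_\lambda$ of the minimal model $\mathcal M_Y$ inducing multiplication by $\lambda^n$ on $H^n$ --- an algebraic consequence of the bigrading of a formal minimal model --- and condition $(i)$ is exactly what is needed to realize such algebraic data \emph{metrically, with linear control}, using Whitney's fact that an $L$-Lipschitz map pulls a flat $k$-form back to a flat $k$-form whose comass grows by at most a factor $O(L^k)$.

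For $(i)\Leftrightarrow(ii)$: fix $\iota\colon H^*(Y)\to\Omega_\flat^*Y$ as in $(i)$. Composing the formality quasi-isomorphism $\mathcal M_Y\xrightarrow{\sim}(H^*(Y),0)$ with $\iota$ realizes the minimal model of $Y$ inside the flat forms and through the \emph{finite-dimensional} subalgebra $\iota(H^*(Y))$. One realizes $\psi_p$ geometrically skeleton by skeleton; the obstructions are the successive rational $k$-invariants, each represented by a pulled-back flat form which --- being exact, because $\psi_p$ is a genuine algebra map factoring through $\iota(H^*(Y))$ --- has a primitive smaller by a factor of $p$, plus finitely many torsion obstructions handled as in \cite{CDMW}; these last are why the construction closes up only for a logarithmically dense set of $p$. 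Conversely, given the self-maps of $(ii)$, rescaling the pullbacks $\psi_p^*$ by $p^{-k}$ on $k$-forms and taking a subsequential limit along those $p$ --- via a compactness argument for bounded families of flat forms --- yields an algebra map $H^*(Y)\to\Omega_\flat^*Y$ sending cohomology classes to representatives, which is $(i)$.

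For $(i)\Rightarrow(iii)$ --- the technical heart of the theorem, and the step I expect to be the main obstacle --- I would run quantitative obstruction theory along the Postnikov tower $\cdots\to Y_k\to Y_{k-1}\to\cdots$ of $Y$. Given nullhomotopic $L$-Lipschitz $f\colon X\to Y$, lift $f$ up the tower and build a nullhomotopy down it one stage at a time; at stage $k$ the obstruction to extending the partial nullhomotopy over $X\times[0,1]$ is a class in $H^k(X;\pi_k(Y))$ whose free part is represented, using the $L$-Lipschitz data and the forms $\iota(\alpha)$, by an explicit flat form of comass $O(L^k)$. Because $\iota$ is \emph{multiplicative}, every cup product and higher operation of $Y$ is witnessed by honest wedges of forms, so this form --- which we know to be exact --- has a primitive of comass $O(L^{k-1})$, correcting the nullhomotopy by an $O(L)$-Lipschitz amount; the torsion part of $\pi_k(Y)$ contributes only a bounded correction by the finiteness argument of \cite{CMW,CDMW}. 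Summing the $O(\dim X)$ stages produces a $C(X,Y)(L+1)$-Lipschitz nullhomotopy. What makes this genuinely depend on $(i)$ rather than on formality alone is that not every formal space admits such a multiplicative system of representing forms; when it does not, the secondary obstructions reappear at nonlinear size --- which is precisely the mechanism behind the counterexamples to Gromov's conjecture announced in the abstract.

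Finally, $(iii)\Rightarrow(iv)$ is a comparison upgrading linear nullhomotopies to linear homotopies, in the spirit of \cite{CMW,CDMW}: triangulating $S^n\times[0,1]$ at scale proportional to $1/L$, one extends the map given by $f$ and $g$ on $S^n\times\{0,1\}$ over simplices in order of dimension, filling the boundary sphere of each top simplex --- a nullhomotopic $O(L)$-Lipschitz map --- using $(iii)$; the hypothesis $n<\dim Y$ is what forces the primary obstruction to the global extension to vanish, and its failure at $n=\dim Y$ is exactly the even-sphere phenomenon of \cite{CMW}. For $(iv)\Rightarrow(i)$ I would argue contrapositively: a space failing $(i)$ has a cup-product relation not witnessed by any wedge of closed representing forms, and running the forms/rational-homotopy dictionary backwards turns this into a Hurewicz-trivial class in some $\pi_n(Y)$, $n<\dim Y$, that is too distorted to admit linear homotopies between suitable $L$-Lipschitz maps $S^n\to Y$ --- closely tied to the distortion counterexamples above. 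The torsion bookkeeping and the metric gluing lemmas are routine; the genuinely hard point is the exact-form-with-linear-primitive estimate driving $(i)\Rightarrow(iii)$, and its mirror image in $(iv)\Rightarrow(i)$.
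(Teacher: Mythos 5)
Your equivalence $(i)\Leftrightarrow(ii)$ is essentially the paper's argument: in particular the direction $(ii)\Rightarrow(i)$ --- rescale the pullbacks under the self-maps degree by degree and extract a weak$^\flat$-convergent subsequence using compactness for bounded families of flat forms --- is exactly how the paper proceeds (the limit kills the positive-depth generators of the bigraded minimal model and factors through $H^*(Y;\mathbb{R})$). The direction $(i)\Rightarrow(ii)$ in the paper runs through a realization theorem for grading automorphisms of formal spaces plus the shadowing principle rather than a hands-on obstruction count, but your version is morally the same.

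The genuine gap is in your $(i)\Rightarrow(iii)$. Two problems. First, your obstruction cochains are only estimated for the free, Hurewicz-nontrivial part of $\pi_k(Y)$. For a class $\alpha$ of depth $i\geq 1$ in the filtration dual to the $U_j$, the obstruction cochain on a unit $k$-cell has size $O(L^{k+i})$, and inserting it as an $O(L)$-Lipschitz correction requires knowing that multiples of size $L^{k+i}$ of such $\alpha$ admit $O(L)$-Lipschitz representatives --- i.e., the sharp lower bound in Gromov's distortion conjecture. The paper obtains that bound by composing with the scaling self-maps of $(ii)$; it is not a consequence of $(i)$ together with exactness of forms, and your ``primitive of comass $O(L^{k-1})$'' does not address it. Second, and more fundamentally, a small primitive for an obstruction form controls a \emph{formal} (DGA-level) homotopy; converting that into a genuine homotopy that is $O(L)$-Lipschitz in the time direction as well as the space direction is precisely the open difficulty, and a single pass through the Postnikov tower does not achieve it --- that style of argument is what yields the shadowing principle, which produces an efficient \emph{map} in the homotopy class, not an efficient \emph{homotopy}. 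The paper's actual mechanism for $(ii)\Rightarrow(iii)$ is an iterated ``combing'': one interpolates through $\log_p L$ intermediate maps $r_{p^k}\circ f_k$, where $f_k$ shadows $f^*m_Y\rho_{p^{-k}}$ and $r_{p^k}$ is a scaling self-map, and the $k$th connecting homotopy has thickness $O(L)$ but length $O(p^k)$; linearity comes from the geometric series $\sum_k p^k=O(L)$, not from summing boundedly many Postnikov stages. Nothing in your sketch produces this telescoping structure, and without it the time-direction Lipschitz constant is not controlled. Relatedly, your $(iv)\Rightarrow(i)$ is only a heuristic contrapositive; the paper instead proves $(iv)\Rightarrow(ii)$ constructively, by an induction on skeleta (a strengthened shadowing principle whose inductive step uses the distortion conjecture for the skeleta), and then applies the limit argument $(ii)\Rightarrow(i)$. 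The scaling-limit obstruction you allude to does show that specific non-scalable formal spaces fail $(iii)$, but it does not by itself yield the general implication.
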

\begin{rmk}
  The conditions (i) and (ii) imply formality of $Y$ almost immediately and in fact can be seen as geometric strengthenings of two equivalent characterizations of formality given by Sullivan.  In \S\ref{S:NF}, we give an example of a non-formal space which satisfies (iv) but not (iii).  It is not clear whether (iii) implies formality.

  On the other hand, condition (i) is strictly weaker than the notion of ``geometric formality'' introduced by Kotschick \cite{Kot} based on Sullivan's observation that it is satisfied by symmetric spaces, and studied by several others.  For example, all simply connected geometrically formal 4-manifolds are rationally equivalent to $S^4$, $\mathbb{C}P^2$, or $S^2 \times S^2$.
\end{rmk}

We call spaces satisfying (i)--(iv) \emph{scalable} based on the scaling maps of (ii).  Examples of scalable spaces include spheres, projective spaces, and other symmetric spaces of compact type.  More examples of spaces known to be scalable and those known not to be scalable are given in Table \ref{table}.

We summarize some properties of scalable spaces below.
\begin{thmA}[Properties of scalable spaces] \label{props}\ 
  \begin{enumerate}[(a)]
  \item Scalability is invariant under rational homotopy equivalence.\label{Qinv}
  \item The class of scalable spaces is closed under products and wedge sums.
  \item All skeleta of scalable complexes are scalable.
  \item Scalable spaces satisfy Gromov's distortion conjecture. \label{B:dist}
  \end{enumerate}
\end{thmA}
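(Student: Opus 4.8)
The plan is to use Theorem~\ref{tfae} throughout, working with whichever of the equivalent conditions (i)--(iv) is most convenient for each item; almost all of the work goes into part~(d), while parts (a)--(c) are comparatively soft manipulations of condition~(i).

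For part~(a) I would show that condition~(i) is inherited by the source of a rational equivalence. Given a rational equivalence $g\colon Z\to Y$ of simply connected finite complexes, which we may take to be simplicial, the induced map $g^*\colon H^*(Y;\mathbb R)\to H^*(Z;\mathbb R)$ is an isomorphism and $g^*\colon \Omega_\flat^* Y\to\Omega_\flat^* Z$ is a homomorphism of differential graded algebras; hence if $s\colon H^*(Y;\mathbb R)\to\Omega_\flat^* Y$ witnesses scalability of $Y$, then $g^*\circ s\circ (g^*)^{-1}$ witnesses scalability of $Z$. To upgrade this to a two-sided invariance statement one needs a rational equivalence pointing each way between any two rationally equivalent finite complexes; this follows from the standard fact that a rational equivalence becomes invertible after inverting the finitely many primes appearing in the torsion of its mapping cone, together with the compactness of finite complexes. (One could instead phrase condition~(i) purely in terms of the minimal model, which makes invariance automatic, but then the flat-form side must be compared with the minimal model, so the direct argument above seems cleaner.)

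For part~(b), closure under products and wedges is read off from condition~(i) using the behaviour of flat forms under the relevant maps. If $s_i\colon H^*(Y_i;\mathbb R)\to\Omega_\flat^* Y_i$ are given, then for the product the composite
\[ H^*(Y_1\times Y_2;\mathbb R)\;\cong\;H^*(Y_1;\mathbb R)\otimes H^*(Y_2;\mathbb R)\xrightarrow{\,s_1\otimes s_2\,}\Omega_\flat^* Y_1\otimes\Omega_\flat^* Y_2\xrightarrow{\,\wedge\,}\Omega_\flat^*(Y_1\times Y_2), \]
where the last map sends $\alpha\otimes\beta$ to $\pi_1^*\alpha\wedge\pi_2^*\beta$, is a differential graded algebra homomorphism picking representatives, using that pullback along the smooth projections and wedge products preserve flatness together with the Künneth theorem. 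For the wedge one uses that $\Omega_\flat^*(Y_1\vee Y_2)$ is the fibre product of $\Omega_\flat^* Y_1$ and $\Omega_\flat^* Y_2$ over $\mathbb R$ (a flat form on the wedge is a pair of flat forms agreeing at the base point) and that $H^*(Y_1\vee Y_2;\mathbb R)$ is the corresponding fibre product of cohomology rings, so $s_1\times s_2$ does the job. For part~(c), let $\iota\colon Y^{(k)}\hookrightarrow Y$ be the inclusion of the $k$-skeleton of a scalable complex. Since $\dim Y^{(k)}\le k$, the composite $\iota^*\circ s\colon H^*(Y;\mathbb R)\to\Omega_\flat^* Y^{(k)}$ annihilates $H^{>k}(Y;\mathbb R)$ for degree reasons, hence descends to the subring $\iota^*H^*(Y;\mathbb R)\subseteq H^*(Y^{(k)};\mathbb R)$ and picks representatives there; the only classes not reached lie in degree exactly $k$, and since any product of such a class with a positive-degree class vanishes in $H^*(Y^{(k)})$ for dimension reasons, one extends to a ring map on all of $H^*(Y^{(k)};\mathbb R)$ by choosing arbitrary flat representatives for a complementary set of degree-$k$ classes.

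For part~(d) the strategy is to attach to each $\alpha\in\pi_n(Y)$ an integer $w=w(\alpha)$ with $\delta_\alpha(L)=\Theta(L^{w})$ and to check that $w=n$ exactly when $\alpha$ has nontrivial rational Hurewicz image and $w\ge n+1$ otherwise. Here $w$ should be the \emph{weight} of $\alpha$ coming from the positively graded (``pure'') structure that scalability puts on the real minimal model of $Y$; in particular $\pi_n(Y)\otimes\mathbb R$ acquires a weight grading, and the scaling self-maps $\Phi_p$ of condition~(ii) act on the weight-$w$ summand by $p^{w}$. The lower bound $\delta_\alpha(L)=\Omega(L^{w(\alpha)})$ then comes from these self-maps: for $\alpha$ of pure weight $w$, the map $\Phi_p\circ\alpha$ is $C(Y)(p+1)$-Lipschitz and represents $p^{w}\alpha$ up to a bounded-order torsion correction, so $\delta_\alpha(L)=\Omega(L^{w})$; a general $\alpha$ is handled by writing $\alpha=\sum_w\alpha_w$, realizing each $k\alpha_w$ cheaply and adding the maps by pinching $S^n$, with an induction on the top weight so that the smallest-weight piece dominates. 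The matching upper bound $\delta_\alpha(L)=O(L^{w(\alpha)})$ is Gromov's integration procedure made quantitative using the flat representatives of condition~(i): every real-valued invariant of $\pi_n(Y)$ detecting $\alpha$ is an integral over the fixed sphere $S^n$ of a polynomial expression in pulled-back flat forms and their antiderivatives whose total cohomological degree is the weight of that invariant, pulling back along an $L$-Lipschitz map costs $O(L)$ per unit of degree, and antidifferentiation over the fixed $S^n$ costs only a constant, so such an invariant is $O(L^{w})$ on $L$-Lipschitz maps; one then takes the invariant of minimal weight nonzero on $\alpha$. The main obstacle is carrying out this last step honestly for a \emph{mixed}-weight $\alpha$: one needs the weight grading on $\pi_n(Y)\otimes\mathbb R$ to be dual, weight by weight, to a grading on the space of invariants, so that ``smallest weight detecting $\alpha$'' coincides with ``smallest weight occurring in $\alpha$'', and one needs this smallest weight to equal $n$ precisely in the Hurewicz-nontrivial case — both of which should fall out of unwinding the pure minimal model but require care.
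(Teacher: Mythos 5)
Parts (a)--(c) are essentially sound. For (a), your pullback of condition (i) along a rational equivalence $g\colon Z\to Y$ is exactly the right mechanism, but the existence of a genuine map between two rationally equivalent finite complexes that induces a rational equivalence (in the direction you need) is not the soft localization fact you gesture at: it is a real theorem, and the paper leans on \cite[Thm.~B]{PWSM} for it (the paper instead transports condition (ii), composing $Z\to Y\xrightarrow{r_p} Y\to Z$). For (b) the paper takes products and wedges of the scaling maps of condition (ii); your argument via condition (i) and K\"unneth/fibre products is a legitimate alternative and arguably cleaner (it avoids having to intersect the two logarithmically dense sets of scales). Your (c) is the paper's argument almost verbatim.

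The genuine gap is in the lower bound of part (d). You propose to write $\alpha=\sum_w\alpha_w$ in terms of the weight grading and to ``realize each $k\alpha_w$ cheaply,'' but the weight decomposition lives in $\pi_n(Y)\otimes\mathbb{R}$ (it comes from the non-canonical Halperin--Stasheff bigrading of the real minimal model), so the pieces $\alpha_w$ are in general not integral --- not even rational --- homotopy classes, and ``$k\alpha_w$'' has no representing map to pinch together. The paper avoids this entirely: it takes one fixed representative $f$ of $\alpha$ and composes with the scaling maps $r_p$ of condition (ii), so that only genuine maps ever appear; the bookkeeping for which multiple of $\alpha$ is obtained is done through the filtration $\Lambda_\ell$ rather than through a choice of splitting. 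If you want to salvage your decomposition idea, the correction terms you must absorb (the difference between $p^{n+\ell}\alpha$ and $(r_p)_*\alpha$) at least lie in $\Lambda_{\ell+1}$ and are integral, so an induction on depth through genuine classes can work --- but an induction on weight through real classes cannot.

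Two further remarks on (d). First, your upper bound is morally Gromov's integration argument, but making ``total cost $=L^{\text{weight}}$'' precise for invariants involving iterated antidifferentials is exactly what the paper's $U$-dilatation formalism (Propositions \ref{dist<} and \ref{Udil-for-M}) packages; note that this upper bound holds for \emph{all} simply connected finite complexes, with no scalability hypothesis, so deriving it from condition (i) is doing unnecessary work. Second, the duality issue you flag at the end (matching the smallest weight occurring in $\alpha$ with the smallest weight of an invariant detecting $\alpha$) is resolved in the paper by phrasing everything in terms of the canonical filtrations $U_i$ and $\Lambda_i$ rather than the non-canonical gradings; I would recommend the same.
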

\renewcommand{\arraystretch}{1.1}
\begin{table}
  \begin{tabular}{|l l|}
    \firsthline
    \multirow{4}{*}{
      \begin{tabular}{|l l|}
        \firsthline
        \multirow{4}{*}{
          \begin{tabular}{|l|}
            \firsthline
            \textbf{Symmetric spaces} \\
            $S^n$, $\mathbb{C}P^n$, $\mathbb{H}P^n$ \\
            Grassmannians \\
            \lasthline
          \end{tabular}
        }
        & \textbf{Scalable spaces} \\
        & $\mathbb{C}P^2 \mathbin{\#} \mathbb{C}P^2$, \\
        & $3\mathbb{C}P^2 \mathbin{\#} 3 \overline{\mathbb{C}P^2} $, \\
        & $\# r(S^n \times S^n)$, $r \leq {2n \choose n}/2$ \\[0.05cm]
        \lasthline
      \end{tabular}
    }
    & \textbf{Formal spaces} \\
    & $\#r\mathbb{C}P^2$, $r \geq 4$ \\
    & $\mathbb{C}P^n \mathbin{\#} \mathbb{C}P^n$, $n \geq 3$ \\
    & $\#r(S^n \times S^n)$, $r>{2n \choose n}/2$ \\[0.2cm]
    \lasthline
  \end{tabular}
  \caption{A Venn diagram of simply connected manifolds.}
  \label{table}
\end{table}

In fact, in Theorem \ref{MM-only} we state a stronger result related to Theorem \ref{props}\eqref{B:dist}, which essentially says that for any simply connected domain $X$, we can understand the minimal Lipschitz constant of maps in a homotopy class in $[X,Y]$ purely by looking at maps between Sullivan minimal models.  We defer the statement until it can be made precise.

On the other hand, we show that the strong form of the distortion conjecture does not always hold for non-scalable spaces, even those that are formal:
\begin{thmA} \label{grconj}
  The class of the puncture in $\pi_5([\#4\mathbb{C}P^2 \times S^2]^\circ)$
  has distortion $o(L^6)$.
\end{thmA}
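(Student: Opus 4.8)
Throughout write $N:=\#4\mathbb{C}P^2$, $M:=N\times S^2$, and $X:=M^{\circ}=M\smallsetminus\{*\}$. Since $M$ has no odd‑dimensional cells and a single top cell, the product of the top cells of $N$ and $S^2$, the space $X$ is homotopy equivalent to $M^{(4)}$, and the class $\alpha\in\pi_5(X)$ of the puncture is the attaching map of that top cell, with mapping cone $C_\alpha\simeq M$. I would begin by recording the relevant algebra: $H^*(X)$ has degree‑$2$ generators $x_1,\dots,x_4,y$ and degree‑$4$ generators $\mu=x_i^2$ and $x_iy$, with $x_ix_j=0$ for $i\neq j$, $y^2=0$, and $H^{\geq5}(X)=0$. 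In particular $H_5(X)=0$, so $\alpha$ has trivial rational Hurewicz image and Gromov's conjecture predicts distortion $\Omega(L^6)$; and because $C_\alpha\simeq M$, the primary Hopf invariants satisfy $h_{x_i,x_iy}(\alpha)=1$ (since $x_i\cup x_iy=\mu y=[M]^{*}$), hence $h_{x_i,x_iy}(k\alpha)=k$ for each $i$.

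Next I would recall the standard upper bound and isolate where it must be beaten. Given an $L$‑Lipschitz $f\colon S^5\to X$ representing $k\alpha$, rescaling produces a $1$‑Lipschitz $\widetilde f\colon S^5(L)\to X$; choosing flat forms on $X$ representing the cohomology classes, pulling them back, and expressing $k=h_{x_i,x_iy}(k\alpha)$ as a functional‑cup‑product integral $\int_{S^5(L)}\gamma_i\wedge\widetilde f^{*}\omega_{x_i}\wedge\widetilde f^{*}\omega_y$ with $d\gamma_i=\widetilde f^{*}\omega_{x_i}$, the crude estimates $\|\gamma_i\|_\infty\lesssim L$, $\|\widetilde f^{*}\omega_{x_i}\|_\infty,\|\widetilde f^{*}\omega_y\|_\infty\lesssim1$, $\operatorname{vol}S^5(L)\sim L^5$ give $k\lesssim L^6$, i.e.\ $\delta_\alpha(L)=O(L^6)$. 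This merely reproduces Gromov's lower bound, so the content of Theorem \ref{grconj} is that this estimate is not sharp, and the reason is that $M$ is formal but \emph{not} scalable: the relations $x_i^2=x_j^2$ and $x_ix_j=0$ ($i\neq j$) cannot hold simultaneously for flat forms on $X$, because at a point of the top cell of $N$ where a closed form $\omega_\mu$ representing $\mu$ is nonzero (a positive‑measure set, as $\int\omega_\mu=1$ over that cell) the $\omega_{x_i}$ would restrict, on the at most $4$‑dimensional tangent space, to four elements of $\Lambda^2\mathbb{R}^4$ whose Gram matrix with respect to the wedge pairing is a nonzero multiple of $I_4$ — impossible, since that pairing has signature $(3,3)$. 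This is exactly the obstruction making $N$, hence $M$, non‑scalable (cf.\ Table~\ref{table}).

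Finally one must convert this pointwise algebraic obstruction into a genuine improvement of the exponent. The forms representing the $x_i$ can be chosen freely, but never so that the "defect" by which the above relations fail vanishes, and $k$ is controlled by that defect — heuristically, if the relations held exactly then $\gamma_i\wedge\widetilde f^{*}\omega_{x_i}$ would be closed, hence exact on $S^5$, forcing $k=0$. The plan is to use all four Hopf‑invariant identities at once, playing the four primitives $\gamma_i$ against the defect forms and using that $\widetilde f$ factors through the $4$‑dimensional complex $X$, to show the $\gamma_i$ cannot all be as efficient as the crude bound allows; this should upgrade $k\lesssim L^6$ to $k\lesssim L^{r}$ for some $r<6$ (I expect a clean value, e.g.\ $r=4$, matching the behaviour of $\alpha$ under the collapse $X\to S^4\vee S^2$). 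The main obstacle is precisely this conversion: the obstruction is pointwise while the target is an integral inequality with a strictly smaller power of $L$, so one likely needs a multiscale argument — decomposing $S^5(L)$ according to the size of $\widetilde f^{*}\omega_\mu$, extracting a fixed proportional gain at each scale from the signature obstruction, and summing. As a complement, to pin down $\delta_\alpha(L)=\Theta(L^{r})$ rather than only bound it, I would also construct $O(k^{1/r})$‑Lipschitz representatives of $k\alpha$ by combining the nontrivial self‑maps of $N$ realizing integer matrices $A$ with $A^{\top}A=dI$ (which wrap the top cell of $N$ roughly $d$ times at Lipschitz cost $\sim d^{1/2}$) with degree maps on the $S^2$ factor.
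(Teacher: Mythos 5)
Your setup and your identification of the algebraic obstruction are exactly right: the relations $x_i^2=x_j^2\neq 0$, $x_ix_j=0$ for four classes cannot be realized by $2$-forms at a point, by the signature-$(3,3)$ argument on $\bigwedge^2\mathbb{R}^4$ (this is Theorem \ref{subalg}(ii) with $n=2$, $r=4$). But the step you flag as "the main obstacle" --- converting the pointwise obstruction into an asymptotic statement --- is precisely the content of the proof, and your proposal does not supply it. The paper's mechanism is not a multiscale decomposition of $S^5(L)$ aiming at an explicit exponent $r<6$; it is a rescaling-and-compactness argument. One assumes for contradiction a sequence of $CN$-Lipschitz maps $f_N$ representing $N^6\alpha$, forms the normalized pullbacks $N^{-2}f_N^*\alpha_i$, $N^{-2}f_N^*\beta$, $N^{-4}f_N^*\gamma_i$ (where $d\gamma_i=\alpha_i^2-\alpha_4^2$ records the "defect"), and extracts a weak$^\flat$-convergent subsequence by Banach--Alaoglu. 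The defect terms are forced to zero because their natural scaling is $N^{-4}$ while the Lipschitz bound only produces $N^3$; so the limit forms satisfy the relations \emph{exactly}. Nontriviality of the limit is then guaranteed by the functional-cup-product identity $\int_{S^5}f_N^*\alpha_i^2\wedge\eta_N=N^6$ together with the coisoperimetric inequality (Lemma \ref{coIP}), which lets one pass this identity to the limit. The contradiction with the pointwise algebra yields $o(L^6)$ directly --- which is all the theorem claims.

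Two further cautions. First, your hope for a clean exponent $r<6$ (e.g.\ $r=4$) goes beyond what is known: the paper explicitly states that no upper bound on the distortion stronger than $L^6$ is known, and no lower bound beyond the trivial $L^5$; the compactness argument is qualitative and does not produce a quantitative improvement of the exponent. Second, your heuristic that "$k$ is controlled by the defect" needs care: the defect primitives $\gamma_i$ enter the functional cup products only after pullback, and the whole point is that their contribution is $O(L^7)$ against a target of $N^6=\Theta(L^6)$ --- it does not vanish for any finite $L$, only in the rescaled limit. Without the weak$^\flat$ limit (which requires working with flat forms and Lemma \ref{lem:dga} to commute limits with $d$ and $\wedge$), the pointwise obstruction gives no bound at all for fixed $L$.
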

We do not, however, know any matching lower bounds on distortion besides the trivial $L^5$, nor do we have upper bounds stronger than the already known $L^6$ (either of which would be very interesting.)  We merely show that the known upper bound cannot be sharp.  This is similar to the situation for Lipschitz homotopies of non-scalable formal spaces: we show that they cannot have linear Lipschitz constant, but we do not give any other lower bound for the sizes of homotopies.  This contrasts with the examples given in \cite{CDMW} and \cite{CMW}, which include an explicit lower bound.

Finally, applying Theorem \ref{tfae} to maps between wedges of spheres yields the following:
\begin{cor}
  For every rational number $r \geq 4$, there are spaces $X_r$ and $Y_r$ such
  that the growth of $[X_r,Y_r]$ is $\Theta(L^r)$.
\end{cor}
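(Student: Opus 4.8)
The plan is to reduce the growth of $[X_r,Y_r]$, for a suitable wedge of spheres $Y_r$ and a suitable finite complex $X_r$, to a lattice‑point count whose exponent can be prescribed. The first step is to note that every wedge of spheres is scalable: since all cup products of positive‑degree classes in $H^*(\bigvee S^{m_j})$ vanish, choosing closed flat representatives supported near the individual wedge summands assembles into a dga homomorphism $H^*(Y)\to\Omega_\flat^*Y$, verifying condition (i) of Theorem~\ref{tfae}. Consequently Theorem~\ref{MM-only} applies with $Y=Y_r$, so the minimal Lipschitz constant of a map $X_r\to Y_r$ in a given homotopy class is, up to a multiplicative constant depending only on $X_r,Y_r$, computed from maps of Sullivan minimal models; concretely, each generator of $\mathcal M_{Y_r}$ contributes a cost equal to a fixed fractional power of the ``mass'' of its image, the denominator being the total leaf–dimension of that generator in the Quillen model.

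The second step is the construction of $X_r$. We build $X_r$ by attaching cells to a wedge of spheres along iterated Whitehead products, arranged so that: (a) a homotopy class in $[X_r,Y_r]$ is determined, up to a finite ambiguity coming from higher (finite) homotopy, by an integer vector $(k_1,\dots,k_m)$ recording the ``degrees'' of the map on the bottom spheres, subject to the obstruction equations forced by the attaching maps; and (b) by Theorem~\ref{MM-only} the minimal Lipschitz constant of a representative of $(k_1,\dots,k_m)$ is comparable to the minimal‑model cost, which one evaluates to be of the form $\max_i|k_i|^{1/d_i}$ on the admissible locus, with the $d_i$ determined by the cell and sphere dimensions. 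The growth of $[X_r,Y_r]$ is then, up to a bounded factor, the number of admissible $(k_1,\dots,k_m)$ with $\max_i|k_i|^{1/d_i}\le L$. The dimensions are chosen so that the admissible locus is cut out by a single monomial inequality $\prod_i|k_i|^{a_i}\lesssim L^{b}$ with exponent vector not parallel to a coordinate axis; such a region contains $\Theta\bigl(L^{\,b/\min_i a_i}\bigr)$ lattice points, with \emph{no} logarithmic factor precisely because the minimum $a_i$ is attained uniquely. Since $a_1,\dots,a_m,b$ are affine‑integral functions of the dimensions involved, one checks that $b/\min_i a_i$ runs over every rational number that is at least $4$ (the lower bound reflecting that simply connected cells have dimension $\ge 2$, so the smallest exponent realizable this way is that of $[S^3,S^2]$).

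The last, and hardest, step is the matching lower bound on Lipschitz constants: one must show that no admissible class can be represented with a Lipschitz constant asymptotically smaller than what the minimal‑model computation predicts — equivalently, that the contributions of the various minimal‑model generators combine multiplicatively rather than cancelling. This is exactly where scalability of $Y_r$ is indispensable: for a merely formal target such cancellations genuinely occur (that phenomenon is the content of Theorem~\ref{grconj}), and it is ruled out here by the quantitative equivalence supplied by Theorem~\ref{MM-only}. Two bookkeeping points round out the argument: verifying that the obstruction equations really do carve the lattice of degree‑data down to the claimed monomial region (a finite computation with Hilton's theorem and the Quillen model of $Y_r$), and checking that no other family of homotopy classes — in particular none supported on higher cells or involving the finite part of the homotopy — contributes a larger power of $L$. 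Granting these, the count gives $\Theta(L^r)$, and taking a direct sum of two such pairs (using closure of scalable spaces under wedges, Theorem~\ref{props}(b), to keep $Y_r$ scalable) would let one add exponents and thereby hit all the required rationals in a uniform way.
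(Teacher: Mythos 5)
Your proposal tries to build $X_r$ and $Y_r$ and verify the count from scratch, whereas the paper simply cites \cite[Thm.~3.2]{zoo}, where the spaces are constructed and the growth of $[X_r,Y_r]$ is already shown to be $O(L^r)$ and $\Omega(L^{r-\epsi})$ for every $\epsi>0$; the only new input needed is that the efficient maps built there use homotopies between $O(L)$-Lipschitz maps of wedges of spheres, and Theorem \ref{tfae} (wedges of spheres being scalable, e.g.\ by your own verification of condition (i)) makes those homotopies $O(L)$-Lipschitz, closing the $\epsi$ gap. Measured against that, your write-up has a genuine gap: the entire construction of $X_r$ --- the choice of cells and dimensions, the verification that the obstruction equations cut out exactly a monomial region, the claim that $b/\min_i a_i$ realizes \emph{every} rational $r\geq 4$, and the check that no other family of classes dominates the count --- is asserted rather than proved. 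These are not ``bookkeeping points''; they are the substance of \cite[Thm.~3.2]{zoo}, and without either carrying them out or invoking that result the corollary is not established. (Your closing suggestion that wedging two such pairs ``adds exponents'' is also unjustified: $[X_1\vee X_2,\,Y_1\vee Y_2]$ contains cross-terms such as $[X_1,Y_2]$, so growth exponents need not simply add.)

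You have also inverted the role of scalability. The direction you call the hardest step --- that no class admits a representative with Lipschitz constant asymptotically smaller than the minimal-model cost, which is what the \emph{upper} bound on growth requires --- holds for \emph{every} simply connected target: it is the inequality $c\lvert\alpha\rvert_{\mathrm{UDil}}\leq\lvert\alpha\rvert_{\Lip}$ coming from Proposition \ref{Udil-for-M}, the half of Theorem \ref{MM-only} that needs no hypothesis on $Y$. What scalability buys is the opposite inequality: the existence of representatives actually achieving the minimal-model cost, which is what the \emph{lower} bound $\Omega(L^r)$ on growth (and the paper's improvement over the $\Omega(L^{r-\epsi})$ of \cite{zoo}) requires. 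Your appeal to Theorem \ref{grconj} in fact cuts against you: the ``cancellation'' there shows that efficient representatives predicted by the minimal model \emph{fail to exist} for a non-scalable formal target, i.e.\ it is the construction of efficient maps, not the lower bound on Lipschitz constants, that breaks down without scalability.
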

The spaces $X_r$ and $Y_r$ are constructed in \cite[Thm.~3.2]{zoo}, and it is shown there that the growth of $[X_r,Y_r]$ is asymptotically $\Omega(L^{r-\epsi})$ for every $\epsi>0$ and $O(L^r)$.  The construction of efficient maps uses a homotopy between two $O(L)$-Lipschitz maps between wedges of spheres; the remark after Example 3.1 indicates that if one had such a homotopy with Lipschitz constant $O(L)$, then the estimate on growth could be improved to $\Theta(L^r)$.  The existence of such homotopies follows from Theorem \ref{tfae}.

\subsection{Which spaces are scalable?}
To decide whether a space is scalable, we typically use condition (i) of Theorem \ref{tfae}.  To prove that a closed, formal $n$-manifold $Y$ is not scalable, we show the following local obstruction.  A map $\ph$ as in (i) sends the fundamental class $[Y]\in H^n(Y)$ to a nonzero (flat) form; in particular, it has a nonzero restriction at some point $p\in Y$.  Evaluating forms at that point, we get a mapping of graded algebras
\[\ph_p: H^*(Y;\mathbb{R})\xrightarrow{\ph} \Omega^*_{\flat}(Y)\to \bigwedge T_p^*\,,\] 
which we show to be an embedding due to the Poincar\'e pairing.  We discuss several families of manifolds for which such an embedding is impossible.  Conversely, in some cases we are able to extend a local embedding of $H^*(Y;\mathbb{R})$ in the exterior algebra on a single tangent space to an embedding into the entire $\Omega^*(Y)$.

It is tempting to conjecture that this can always be done; that is, that one can always extend an embedding of $H^*(Y;\mathbb{R})$ at one point (when $Y$ is a closed manifold) or several points (otherwise) to an embedding into $\Omega^*(Y)$.  This would imply the following additional criterion for scalability:
\begin{optconj}
  A space is scalable if and only if it is formal and $H^*(Y;\mathbb{R})$ embeds in $\bigwedge^* \mathbb{R}^N$ for some finite $N$.
\end{optconj}
Scalability would then depend only on real homotopy type---itself an open problem:
\begin{question}
  By Theorem \ref{props}(\ref{Qinv}), scalability is a rational homotopy invariant.  Is it also an $\mathbb{R}$-homotopy invariant?
\end{question}

\subsection{Scaling limits}
We now discuss techniques used in the proof of Theorem \ref{tfae}.  The most novel of these is used in showing (ii) $\Rightarrow$ (i).  Given a sequence of self-maps, we move to the sequence of induced maps $\mathcal{M}_Y^* \to \Omega^*X$ in the world of rational homotopy.  These can be formally scaled so that the corresponding geometric bounds are uniform; then by a compactness theorem, we can find an accumulation point, although this requires us to expand the space of forms to one which is complete.  This accumulation point is the map of (i).  The same technique, in combination with the algebraic impossibility discussed above, is used to prove Theorem \ref{grconj}.

These proofs are reminiscent of the work of Wenger \cite{Wenger} showing that there are nilpotent groups whose Dehn function is not exactly polynomial.  There the role of the limiting object obtained after scaling is played by the asymptotic cone, and one can use the algebraic structure of the nilpotent group to prove the nonexistence of a filling with certain bounds.  Since nilpotent groups can also be studied using rational homotopy theory, it would be interesting to get a stronger handle on the formal similarities between these arguments.

\subsection{Formal maps to genuine maps}
The \emph{shadowing principle} introduced in \cite{PCDF} allows formal, rational homotopy-theoretic maps and homotopies to be upgraded to actual maps between spaces with only a linear deterioration in geometric bounds.  This finds a number of applications in this paper; the simplest is (i) $\Rightarrow$ (ii) of Theorem \ref{tfae}.

A more involved application of the shadowing principle is the direction (ii) $\Rightarrow$ (iii).  This is a generalization of the first author's argument \cite{Berd} proving that maps $S^3 \to S^2$ have linear nullhomotopies, which we summarize as follows.    A map $S^3 \to S^2$ can be ``regularized'' via a short homotopy to have a nice structure: imagine a bowl of spaghetti, in which the sauce is mapped to the basepoint of $S^2$, while the cross-section of each noodle maps homeomorphically onto its complement.  The construction iteratively ``combs'' the spaghetti at larger and larger scales: $2$, $4$, and so on up to $2^{\log(\Lip f)}$.  Each step takes twice as long as the previous one, but there are logarithmically many steps total, making for a linear bound.  Finally the last map is well-organized enough to be nullhomotoped by hand.

Here, we generalize this idea by abstracting the components: a ``combed'' $O(L)$-Lipschitz map is the composition of an $O(L/K)$-Lipschitz map with a ``scaling'' self-map as provided by (ii).  The shadowing principle is used to produce both the intermediate maps (by ``squinting at'' the original map) and the homotopies between them.

\subsection{How to read this paper}

The first few sections are intended to provide examples of most of the phenomena discussed in this paper without requiring knowledge of rational homotopy theory.  Section \ref{S:flat} introduces flat differential forms, which are an important technical tool throughout. Section \ref{S3} proves some simple facts about linear algebra which allow us to show that certain spaces are not scalable.  Section \ref{S4} introduces the basic techniques of quantitative homotopy theory and gives examples of some of the phenomena which occur in non-scalable spaces, one of which is the proof of Theorem \ref{grconj}.  In Section \ref{S:scal}, we show that certain high-dimensional manifolds are scalable, beyond the obvious examples of symmetric spaces and their wedges and products.

In Section \ref{S2}, we discuss rational homotopy theory and its relationship to quantitative results, introducing necessary facts from \cite{PCDF} and relating them to flat forms.   The remaining sections all use this material in an essential way.  The reader who is interested in a slower-paced introduction to the subject is invited to consult \cite{PCDF} for a treatment focusing on quantitative results or a textbook on the subject such as \cite{GrMo}.  Section \ref{S:NF} discusses an example which demonstrates that our methods don't extend straightforwardly to non-formal spaces.  Finally, Section \ref{S5} gives the proof of Theorem \ref{tfae} and Section \ref{S:B} gives the proof of Theorem \ref{props}; one particularly technical point
is banished to an additional final section.

\subsection{Acknowledgements}

We would like to thank Robert Young, who pointed out the reference \cite{Wenger}, as well as Robin Elliott and Shmuel Weinberger for other useful comments.  We also thank the anonymous referee for a large number of corrections as well as clarifying questions and remarks which greatly improved the exposition and led us to several discoveries.  The second author was supported by the NSF via the individual grant DMS-2001042. 

\section{Flat differential forms} \label{S:flat}

For technical reasons we need to introduce the flat differential forms on $X$, denoted by $\Omega_\flat^*(X)$.  These can be defined in several ways:
\begin{itemize}
\item As the dual normed space to the space of flat chains on $X$ \cite[\S IX.7]{GIT}.
\item As the set of $L^\infty$ forms with $L^\infty$ differential, cf.~\cite[Thm.~1.5]{GKS}.  Here the differential of a non-smooth form is defined using Stokes' theorem applied to its action on currents.
\item As the set of (non-smooth) differential forms satisfying certain complicated ``niceness'' conditions, see \cite[\S IX.6]{GIT}.
\end{itemize}
We also write $\Omega_\flat^*(X,A)$ to denote the subalgebra of flat forms that are identically zero on a subcomplex $A$.

Flat forms have a number of attractive properties:
\begin{lem}[{see \cite[\S3]{GKS}}] \label{flatQI}
  The inclusion $\Omega^*(X) \to \Omega_\flat^*(X)$ induces an isomorphism on cohomology.
\end{lem}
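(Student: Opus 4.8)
The plan is to compare both complexes with the simplicial cochain complex $C^*(X;\mathbb{R})$ of a triangulation of $X$ via integration over simplices. A flat $k$-form $\omega$ may be integrated over any $k$-simplex $\sigma$ of the triangulation — this integral is the pairing of $\omega$ with the current of integration along $\sigma$, which is a flat chain — and Stokes' theorem $\int_\sigma d\omega=\int_{\partial\sigma}\omega$ holds, essentially by definition of $d$ on flat forms. Hence integration defines cochain maps $I_\flat\colon\Omega_\flat^*(X)\to C^*(X;\mathbb{R})$ and $I_{\mathrm{sm}}\colon\Omega^*(X)\to C^*(X;\mathbb{R})$ forming a commutative triangle with the inclusion $\Omega^*(X)\hookrightarrow\Omega_\flat^*(X)$. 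The de Rham theorem (in its piecewise-smooth form when $X$ is a complex rather than a manifold) says $I_{\mathrm{sm}}$ is a quasi-isomorphism, so it suffices to prove that $I_\flat$ is one; the inclusion is then a quasi-isomorphism by the two-out-of-three property.

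To show $I_\flat$ is a quasi-isomorphism I would run the usual Mayer--Vietoris induction. Cover $X$ by the open stars $\mathrm{St}(v)$ of its vertices; every nonempty finite intersection of these is the open star of a simplex, hence star-shaped and in particular contractible. Restriction of flat forms to open sets and multiplication of flat forms by Lipschitz functions both preserve flatness, so there is a Mayer--Vietoris sequence for $\Omega_\flat^*$ compatible via $I_\flat$ with the Mayer--Vietoris sequence of simplicial cochains, and a finite induction using the five lemma reduces everything to a \emph{flat Poincar\'e lemma}: on a star-shaped open set $U$, $I_\flat$ induces $H^0\cong\mathbb{R}$ and $H^k=0$ for $k>0$.

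The Poincar\'e lemma is the only step that touches the definition of flatness. Pulling back flat forms along the Lipschitz contraction $H(x,t)=v+t(x-v)$ of $U$ onto $v$ and integrating out the $[0,1]$ variable produces the standard homotopy operator $K$ with $dK\omega+Kd\omega=\omega$ in positive degrees; the point to verify is that $K$ sends flat forms to flat forms. This is true because $K\omega$ is built from integrals $\int_0^1 t^{k-1}(\cdots)\,dt$ whose integrand is bounded by a constant (the diameter of $U$) times $\|\omega\|_\infty$, so $K\omega\in L^\infty$, and then $dK\omega=\omega-Kd\omega\in L^\infty$ as well. Thus $K$ is a genuine contracting homotopy of the flat de Rham complex of $U$, which gives the lemma.

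I expect the main obstacle to be purely the analytic bookkeeping: making precise the pairing of flat forms with currents of integration over simplices together with the accompanying Stokes' theorem, and checking that Lipschitz pullback, Lipschitz partitions of unity, and the operator $K$ all land back among flat forms (bounded with bounded differential). These are standard facts of Whitney's theory; granting them, the remainder is a formal diagram chase. One can also bypass the local computation altogether by invoking Whitney's elementary forms: the Whitney map $W\colon C^*(X;\mathbb{R})\to\Omega_\flat^*(X)$ has piecewise-smooth, hence flat, image and satisfies $I_\flat W=\id$ together with $WI_\flat\simeq\id$ via an explicit chain homotopy, exhibiting $I_\flat$ directly as a quasi-isomorphism.
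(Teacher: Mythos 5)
The paper does not prove this lemma at all: it is quoted from \cite[\S3]{GKS} (and ultimately rests on Whitney's \emph{Geometric Integration Theory}), so there is no internal proof to compare against. Your sketch is essentially the standard argument carried out in those references: map both complexes to simplicial cochains by integration and either run Mayer--Vietoris down to a flat Poincar\'e lemma (via the Lipschitz cone contraction, using that flat forms pull back under Lipschitz maps and admit fiberwise integration with the bound $\lVert K\omega\rVert_\infty \leq C\lVert\omega\rVert_\infty$, so that $dK\omega=\omega-Kd\omega$ is again $L^\infty$) or use the Whitney map $W$ with $I_\flat W=\id$ and $WI_\flat\simeq\id$. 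The one step carrying real content is the one you flag as bookkeeping: verifying that Stokes' theorem and the homotopy formula hold in the flat (distributional) category, i.e.\ that $K$ is dual to the cone construction on flat chains; granting Whitney's foundational results there, your argument is complete and matches the cited proof.
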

\begin{lem}[{see \cite[Theorem 3.6]{GKS}}] \label{flatLip}
  Flat forms pull back to flat forms along Lipschitz maps.
\end{lem}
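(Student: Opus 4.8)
The plan is to exploit the duality between flat forms and flat chains. Recall (Whitney, Wolfe) that on a compact polyhedron or Lipschitz manifold $X$ a flat $k$-form is precisely a bounded linear functional on the normed space $\mathcal F_k(X)$ of flat $k$-chains, equipped with the flat norm $|\cdot|_\flat$; equivalently, it is an $L^\infty$ $k$-form whose distributional exterior derivative is again $L^\infty$, with flat norm comparable to $\max(\|\omega\|_\infty,\|d\omega\|_\infty)$ in comass. I would construct the pullback along a Lipschitz $f\colon X\to Y$ as the adjoint of the pushforward on flat chains, which simultaneously shows it is well-defined and that the result is flat, with good quantitative bounds.

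First I would recall that $f$ induces a pushforward $f_\#\colon\mathcal F_k(X)\to\mathcal F_k(Y)$: it is the obvious operation on Lipschitz singular chains, and it extends continuously because, for a $k$-chain $T$,
\[
\mass(f_\# T)\le(\Lip f)^k\,\mass(T),\qquad \partial(f_\# T)=f_\#(\partial T).
\]
The mass bound is the area formula — the $k$-Jacobian of $f\circ\sigma$ is at most $(\Lip f)^k$ times that of $\sigma$, using Rademacher's theorem that $f$ is differentiable a.e.\ — while the boundary identity is clear on Lipschitz chains and passes to the closure. Combining these, if $T=R+\partial S$ with $\mass R+\mass S$ close to $|T|_\flat$, then $f_\# T=f_\# R+\partial(f_\# S)$, so
\[
|f_\# T|_\flat\le\max\bigl((\Lip f)^k,(\Lip f)^{k+1}\bigr)\,|T|_\flat .
\]

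Next I would define $f^*\colon\Omega_\flat^k(Y)\to\Omega_\flat^k(X)$ by $\langle T,f^*\omega\rangle:=\langle f_\# T,\omega\rangle$. The last display shows this functional is bounded on $\mathcal F_k(X)$, so $f^*\omega$ is a flat form with $\|f^*\omega\|_\flat\le\max((\Lip f)^k,(\Lip f)^{k+1})\|\omega\|_\flat$; that $d\circ f^*=f^*\circ d$ follows by taking adjoints of $\partial\circ f_\#=f_\#\circ\partial$. It then remains to check that $f^*\omega$ is genuinely "the" pullback: when $f$ or $\omega$ is smooth this is the classical change-of-variables identity on chains, which are dense, and in general $f^*\omega$ is represented a.e.\ by $x\mapsto\omega_{f(x)}\bigl(df_x\,\cdot\,,\ldots,df_x\,\cdot\,\bigr)$, again via the area formula applied to this a.e.-defined form (using Rademacher once more).

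The main obstacle is precisely the measure-theoretic bookkeeping underlying the mass estimate and the a.e.\ pointwise identification: making $\mass(f_\# T)\le(\Lip f)^k\mass(T)$ and the change-of-variables formula precise for arbitrary flat chains, not just polyhedral or Lipschitz ones, requires the area/coarea formula together with Rademacher's theorem and some care that the various null sets are compatible. An alternative that avoids constructing $f_\#$ directly is to mollify $f$ to smooth maps $f_\epsilon$ with $\Lip f_\epsilon\le\Lip f$, observe that $\|f_\epsilon^*\omega\|_\infty$ and $\|f_\epsilon^*\,d\omega\|_\infty$ are uniformly bounded, extract a weak-$*$ limit, and identify it with the a.e.-pullback — but there the obstacle reappears as the need to show the limit is independent of the mollification, which is the same area-formula computation in disguise.
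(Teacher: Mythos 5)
The paper does not actually prove this lemma; it is quoted verbatim from \cite[Thm.~3.6]{GKS}, whose argument runs through the $L^\infty$ characterization of flat forms (smooth the map, use uniform $L^\infty$ bounds, pass to a weak$^*$ limit) --- essentially the ``alternative'' you sketch at the end. Your primary route, defining $f^*$ as the adjoint of the pushforward $f_\#$ on flat chains, is the classical Whitney--Federer approach and is sound: the mass and boundary estimates give $\lvert f_\# T\rvert_\flat \leq \max((\Lip f)^k,(\Lip f)^{k+1})\lvert T\rvert_\flat$ on a dense class of finite-mass (or polyhedral) chains, $f_\#$ extends by continuity, and the adjoint is then a bounded functional on flat chains, i.e.\ a flat form, commuting with $d$ because $\partial f_\# = f_\#\partial$. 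This is in fact the same duality the paper invokes later in the proof of Proposition \ref{pullback-bound}, so your argument buys the quantitative comass bound for free, which the mollification route only yields after identifying the limit.

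Two caveats. First, your claim that $f^*\omega$ is represented a.e.\ by $x\mapsto \omega_{f(x)}(df_x\cdot,\ldots,df_x\cdot)$ is false for an arbitrary $L^\infty$ representative of $\omega$: if $f$ maps a set of positive measure into a Lebesgue-null set of $Y$ on which $df$ still has rank $k$ (e.g.\ $f(x_1,x_2)=(x_1,0)$ and $\omega=\chi_{\{y_2=0\}}\,dy_1$, which is the zero flat form but pulls back pointwise to $dx_1$), the formula depends on the representative. The identity holds for continuous or smooth representatives, or for Whitney's canonical a.e.-defined representative, and that is the form in which you should state it; the adjoint definition is what makes $f^*$ canonical. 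Second, the lemma as used in the paper requires $f^*$ to be a homomorphism of DGAs, and multiplicativity under $\wedge$ does not follow from the adjoint construction (wedge product is not visible on chains); you would still need the smoothing argument (or \cite[\S IX.17]{GIT}) to get $f^*(\omega\wedge\eta)=f^*\omega\wedge f^*\eta$. Neither point invalidates the lemma as literally stated, but both deserve a sentence.
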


A sequence of flat forms is said to \emph{weak$^\flat$ converge} if its values on every flat chain converge (this is an instance of weak$^*$ convergence.)
\begin{lem} \label{lem:dga}
  Weak$^\flat$ limits commute with $d$ and $\wedge$.
\end{lem}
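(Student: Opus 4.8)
The plan is to exploit a fact hidden in the definition: a weak$^\flat$ convergent sequence is automatically uniformly bounded. Since the flat $p$-chains form a Banach space, if $\langle\omega_k,T\rangle$ converges for every flat chain $T$ then the uniform boundedness principle gives $\|\omega_k\|^\flat\le C$ for all $k$; as the flat conorm dominates both $\|\omega_k\|_{L^\infty}$ and $\|d\omega_k\|_{L^\infty}$, and the conorm of the limit $\omega$ is no larger (lower semicontinuity of the dual norm), one gets uniform $L^\infty$ bounds on the $\omega_k$, on a second weak$^\flat$ convergent sequence $\eta_k\rightharpoonup\eta$ that we shall wedge against, and on all of their exterior derivatives. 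Commuting with $d$ is then immediate: for any flat chain $T$ the boundary $\partial T$ is again a flat chain (of no greater flat norm), so $\langle d\omega_k,T\rangle=\langle\omega_k,\partial T\rangle\to\langle\omega,\partial T\rangle=\langle d\omega,T\rangle$, and $d\omega$ is flat because $\omega$ is.

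For the wedge product, write $p=\deg\omega$ and $q=\deg\eta$. The Leibniz rule shows the products $\omega_k\wedge\eta_k$, as well as the candidate limit $\omega\wedge\eta$, have uniformly bounded flat conorm, so by density of polyhedral chains in the flat norm it suffices to prove $\langle\omega_k\wedge\eta_k,\sigma\rangle\to\langle\omega\wedge\eta,\sigma\rangle$ for a single $(p+q)$-simplex $\sigma$. Restricting along the Lipschitz inclusion $\sigma\hookrightarrow X$ (Lemma \ref{flatLip}) and identifying $\sigma$ bi-Lipschitzly with a cube $Q$, we reduce to the following statement: if $\omega_k\rightharpoonup\omega$ and $\eta_k\rightharpoonup\eta$ weak$^\flat$ on $Q$ with $d\omega_k$ and $d\eta_k$ bounded in $L^\infty$, then $\int_Q\omega_k\wedge\eta_k\to\int_Q\omega\wedge\eta$.

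This is a form of the \emph{div--curl lemma} of compensated compactness (Murat and Tartar; the differential-forms formulation is due to Robbin, Rogers and Temple), and in our $L^\infty$ setting it admits a clean proof. Since wedging by a fixed $L^2$ form is weakly continuous, we may subtract the limits and assume $\omega_k\rightharpoonup0$, $\eta_k\rightharpoonup0$. Hodge-decompose (working, after a reflection extension, on a flat torus, or directly on $Q$ with suitable boundary conditions) $\omega_k=d\alpha_k+r_k$, where $d\alpha_k$ is the exact part: because $\omega_k$ is $L^2$-bounded the potential $\alpha_k$ is $H^1$-bounded, hence $L^2$-precompact and, being weakly null, $L^2$-null; and because $d\omega_k$ is $L^2$-bounded the coexact part is $H^1$-bounded, so the remainder $r_k$ is $L^2$-null as well. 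Then for a smooth cutoff $\chi$, $\int\chi\,\omega_k\wedge\eta_k=\int\chi\,r_k\wedge\eta_k+\int\chi\,d\alpha_k\wedge\eta_k$; the first integral is $O(\|r_k\|_{L^2})\to0$, and integrating the second by parts expresses it through terms $\int d\chi\wedge\alpha_k\wedge\eta_k$ and $\int\chi\,\alpha_k\wedge d\eta_k$, each $O(\|\alpha_k\|_{L^2})\to0$ since $\eta_k$ and $d\eta_k$ are $L^2$-bounded. Finally, replacing $\chi$ by $1$ changes $\int_Q$ only by the mass over a thin collar of $\partial Q$, which the uniform $L^\infty$ bound keeps uniformly small; this also disposes of the boundary terms in the integrations by parts.

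The crux is this div--curl step, and the control on $d\omega_k$ and $d\eta_k$ is genuinely indispensable there: joint continuity of $\wedge$ fails for general weakly convergent bounded forms, as the oscillation $\sin^2(kx)\,dx\wedge dy=(\sin(kx)\,dx)\wedge(\sin(kx)\,dy)\rightharpoonup\frac12\,dx\wedge dy\neq0$ shows --- the two factors are weakly null in $L^\infty$, but only the first is weak$^\flat$ convergent, the second having unbounded differential $k\cos(kx)\,dx\wedge dy$. Thus the real content of the lemma is that weak$^\flat$ convergence silently carries exactly the $L^\infty$ bound on differentials that compensated compactness requires. The remaining technicalities --- the reflection/extension used to run Hodge theory on a cube, and the resulting boundary terms --- are routine; alternatively, one may cite the continuity properties of wedge products of flat cochains established in Whitney's \emph{Geometric Integration Theory} \cite{GIT}.
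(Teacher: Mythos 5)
Your argument is correct, but it takes a genuinely different route from the paper, whose entire proof is one line: commutation with $d$ is definitional (the differential of a flat form is \emph{defined} by duality with $\partial$, which is exactly your computation $\langle d\omega_k,T\rangle=\langle\omega_k,\partial T\rangle$), and commutation with $\wedge$ is delegated to Whitney \cite[\S IX.17]{GIT}, which is the citation you offer only as a fallback in your last sentence. What you supply instead is a self-contained analytic proof of the Whitney input: Banach--Steinhaus to extract the hidden uniform bound on $\|\omega_k\|_\infty$ and $\|d\omega_k\|_\infty$, reduction to a top-dimensional integral over a single simplex by density of polyhedral chains, and then an $L^\infty$ div--curl lemma via Hodge decomposition. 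This buys real insight --- it isolates the bound on the differentials as the mechanism making the (generally false) joint weak continuity of $\wedge$ true, and your oscillation counterexample makes that vivid --- at the cost of several technical steps the citation avoids. One of these deserves to be made explicit rather than left implicit: you repeatedly use that weak$^\flat$ convergence together with the uniform $L^\infty$ bound implies weak (i.e.\ weak-$*$ $L^\infty$, hence weak $L^2$) convergence of the component functions on the cube --- this is needed both when you ``subtract the limits'' and when you assert the potentials $\alpha_k$ and remainders $r_k$ are weakly null. It is true (convergence on polyhedral $p$-chains plus the uniform bound gives convergence against products of boxes by Fubini and dominated convergence, hence against all $L^1$ densities), but it is a step, not a tautology, since a priori weak$^\flat$ convergence only tests against flat chains. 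The reflection extension needed to run Hodge theory on the cube, and the verification that restriction to a face commutes with $\wedge$ and preserves weak$^\flat$ convergence, are routine as you say. With that one point filled in, the proof stands on its own and is arguably more informative than the reference it replaces.
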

\begin{proof}
  The former is true by definition and the latter is shown in \cite[\S IX.17]{GIT}.
\end{proof}

Finally, we need a version for flat forms of a result originally stated by Gromov and proved among other places as \cite[Lemma 2--2]{PCDF}:
\begin{lem}[Coisoperimetric inequality] \label{coIP}
  Let $A \subset X$ be a simplicial pair with a linear metric.  For every $k$ there is a constant $C(k,X,A)$ such that every exact form $\omega \in \Omega_\flat^{k+1}(X,A)$ has a \emph{primitive} (an $\alpha \in \Omega_\flat^k(X,A)$ satisfying $d\alpha=\omega$) such that $\lVert\alpha\rVert_\infty \leq C(k,X,A)\lVert\omega\rVert_\infty$.
\end{lem}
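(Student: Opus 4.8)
The plan is to dualize. For a flat form $\alpha$ the quantity $\lVert\alpha\rVert_\infty$ is precisely the operator norm of $\alpha$, viewed as a functional on polyhedral $k$-chains, with respect to the mass norm; so it suffices to construct a suitable bounded functional on chains and then invoke the correspondence between flat forms and flat cochains (Wolfe's theorem, see \cite[\S IX]{GIT}). First I would record that an exact form $\omega=d\beta$ with $\beta\in\Omega_\flat^*(X,A)$ annihilates every relative cycle: if $Z$ is a polyhedral $(k+1)$-chain with $\partial Z$ supported in $A$, then $\int_Z\omega=\int_{\partial Z}\beta=0$ by Stokes' theorem for flat forms on polyhedral chains together with the fact that $\beta$ vanishes on $A$. (By Lemma~\ref{flatQI} this vanishing is in fact equivalent to exactness.)

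The geometric input is a relative version of the Federer--Fleming deformation theorem: since $X$ is a fixed finite complex with a linear metric, there is a constant $C_0=C_0(k,X,A)$ such that whenever a polyhedral $k$-chain $Z$ has $\partial Z$ supported in $A$ and vanishes in $H_k(X,A)$, there is a polyhedral $(k+1)$-chain $W$ with $\partial W-Z$ supported in $A$ and $\mass W\leq C_0\mass Z$ — one pushes $Z$ onto successive skeleta and cones within simplices, tracking masses combinatorially. Granting this, let $V$ be the subspace of polyhedral $k$-chains $Z$ with $\partial Z$ supported in $A$ and $[Z]=0$ in $H_k(X,A)$ (this contains all chains supported in $A$ and all absolute boundaries), and define $\alpha\colon V\to\mathbb{R}$ by $\alpha(Z):=\int_W\omega$ for any filling $W$ as above. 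The previous paragraph shows $\alpha$ is independent of the choice of $W$, hence linear; it vanishes on chains supported in $A$ (take $W=0$); and $\lvert\alpha(Z)\rvert\leq\lVert\omega\rVert_\infty\mass W\leq C_0\lVert\omega\rVert_\infty\mass Z$. Extend $\alpha$ to all polyhedral $k$-chains by the Hahn--Banach theorem, preserving this bound.

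It remains to identify $\alpha$ with a flat form. Its coboundary, evaluated on a $(k+1)$-chain $W$, is $\alpha(\partial W)=\int_W\omega$, because $\partial W\in V$ and $W$ itself is an admissible filling of it; thus $\delta\alpha=\omega$ as a cochain, and in particular $\delta\alpha$ is mass-bounded as well. A cochain that is bounded in the mass norm together with its coboundary is a flat cochain, so by Wolfe's theorem it is represented by a flat form, which we again call $\alpha$, with $\lVert\alpha\rVert_\infty\leq C_0\lVert\omega\rVert_\infty$ and $d\alpha=\omega$. Since the cochain annihilates every polyhedral chain supported in $A$, the representing form vanishes almost everywhere on $A$, so $\alpha\in\Omega_\flat^k(X,A)$.

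I expect the main obstacle to be the relative deformation inequality of the second paragraph: making the constant genuinely uniform while respecting the subcomplex $A$ and controlling the mass of the $A$-part of the boundaries, although this is essentially classical. A more analytic alternative, closer to the scaling-limit arguments used later in the paper, would be to smooth $\omega$ to a sequence of exact Lipschitz forms $\omega_i\to\omega$ in the weak$^\flat$ sense with $\lVert\omega_i\rVert_\infty=O(\lVert\omega\rVert_\infty)$, apply the non-flat coisoperimetric inequality \cite[Lemma~2--2]{PCDF} to obtain primitives $\alpha_i$ with uniformly bounded sup norm, and extract a weak$^\flat$ limit $\alpha$, using Lemma~\ref{lem:dga} to conclude $d\alpha=\omega$; there the delicate point is instead the construction of a smoothing operator with a uniform comass bound that preserves vanishing on $A$.
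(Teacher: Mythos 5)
Your argument is correct, but it is not the route the paper takes. The paper's proof is a one-line reduction: it asserts that the proof of \cite[Lemma 2--2]{PCDF} --- which constructs a primitive directly, essentially by a chain homotopy built from fiberwise integration along deformations onto skeleta --- goes through verbatim for flat forms once fiberwise integration is defined in the flat setting (either via the $L^\infty$ description or via a dual notion of shadows of flat chains). You instead work entirely on the dual side: a relative Federer--Fleming filling inequality with a linear constant (valid because $X$ is a fixed compact complex, so the simplicial filling problem is finite-dimensional linear algebra after deformation onto the skeleton), then Hahn--Banach, then Wolfe's theorem to convert the resulting mass-bounded cochain with mass-bounded coboundary back into a flat form. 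This is a clean and legitimate alternative: it sidesteps exactly the technical point the paper flags (defining fiberwise integration of flat forms) by pushing all the geometry onto polyhedral chains, where the deformation theorem is classical; the price is that the primitive is non-constructive (Hahn--Banach) and one must be careful with the relative bookkeeping, which you handle correctly --- well-definedness of $\alpha$ on the fillable chains via the vanishing of $\omega$ on relative cycles, the fact that chains in $A$ and absolute boundaries lie in your subspace $V$ so that the extension still vanishes on $A$ and still satisfies $\delta\alpha=\omega$, and the passage from mass-boundedness of $\alpha$ and $\delta\alpha$ to flat-boundedness. Your parenthetical appeal to Lemma~\ref{flatQI} for the equivalence of exactness with vanishing on relative cycles really needs the relative de Rham/Wolfe statement rather than the absolute one, but that aside is not used in the proof. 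Your second, smoothing-and-weak-limit alternative is closer in spirit to the scaling arguments elsewhere in the paper but, as you note, carries its own technical burden.
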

The proof of \cite[Lemma 2--2]{PCDF} holds verbatim for flat forms once one defines fiberwise integration for these.  This can be done either directly using the $L^\infty$ definition, or by defining a dual notion of shadows of flat chains.

\section{Obstructions to scalability} \label{S3}

In order to show that some spaces aren't scalable we use the following test. 

\begin{prop} \label{LocRep}
  If $X$ is a scalable closed $n$-manifold, then there is an embedding of graded algebras 
  \[H^*(X;\mathbb{R})\hookrightarrow \Lambda^* \mathbb{R}^n.\]
\end{prop}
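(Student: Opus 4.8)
The plan is to extract the required embedding from condition (i) of Theorem~\ref{tfae} by evaluating forms at a single well-chosen point. Since $X$ is scalable, there is a dga homomorphism $\ph\colon H^*(X;\mathbb{R})\to\Omega_\flat^*(X)$ sending each cohomology class to a representative. In particular, $\ph$ sends the fundamental cohomology class $\mu\in H^n(X;\mathbb{R})$ (dual to $[X]$) to a flat $n$-form $\omega=\ph(\mu)$ whose integral over $X$ is nonzero, so $\omega$ is not identically zero. Because $\omega$ is an $L^\infty$ form, there is a point $p\in X$ at which $\omega$ has a well-defined nonzero value in $\bigwedge^n T_p^*X\cong\bigwedge^n\mathbb{R}^n$; more carefully, since all the finitely many forms in the image of $\ph$ on a basis of $H^*(X;\mathbb{R})$ are $L^\infty$, we can choose $p$ to be a common Lebesgue point of all of them, so that evaluation $\ph_p\colon H^*(X;\mathbb{R})\to\bigwedge^*T_p^*X$ is a well-defined homomorphism of graded algebras with $\ph_p(\mu)\neq 0$. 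Fixing a linear isomorphism $T_p^*X\cong\mathbb{R}^n$ identifies the target with $\Lambda^*\mathbb{R}^n$.

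Next I would argue that $\ph_p$ is injective, using Poincar\'e duality. Suppose $\beta\in H^*(X;\mathbb{R})$ is a nonzero homogeneous class of degree $k<n$ with $\ph_p(\beta)=0$. By Poincar\'e duality there is a class $\gamma\in H^{n-k}(X;\mathbb{R})$ with $\beta\smile\gamma=\mu$. Applying the algebra homomorphism $\ph_p$ gives $0=\ph_p(\beta)\wedge\ph_p(\gamma)=\ph_p(\beta\smile\gamma)=\ph_p(\mu)\neq 0$, a contradiction. (Classes of degree $n$ are multiples of $\mu$, and classes of degree $0$ are multiples of $1$, both of which $\ph_p$ sends to something nonzero; degrees above $n$ vanish.) Hence $\ker\ph_p=0$, so $\ph_p$ is the desired embedding of graded algebras $H^*(X;\mathbb{R})\hookrightarrow\Lambda^*\mathbb{R}^n$.

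The main obstacle is the pointwise evaluation step: flat forms are only $L^\infty$, so "the value of $\omega$ at $p$" is not literally defined, and one must be careful that evaluation at a single point is a genuine algebra map rather than merely a linear one. The resolution is to pick $p$ to be a simultaneous Lebesgue point of a finite generating set of forms $\ph(e_i)$, where $\{e_i\}$ is a homogeneous basis of $H^*(X;\mathbb{R})$; such points exist a.e.\ by the Lebesgue differentiation theorem. At such a point the approximate pointwise values exist, and the wedge product of flat forms is compatible with this evaluation (on the full-measure set of common Lebesgue points), so $\ph_p$ defined on the basis and extended multiplicatively agrees with the approximate-value map and is an algebra homomorphism. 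One should also record why $\omega=\ph(\mu)$ cannot be a.e.\ zero: its cohomology class is $\mu\ne0$, equivalently $\int_X\omega\ne0$, so the set where it is nonzero has positive measure and in particular meets the full-measure set of common Lebesgue points, letting us choose $p$ with $\ph_p(\mu)\ne0$. With these points nailed down, the Poincar\'e-pairing argument is the short and essentially formal core of the proof.
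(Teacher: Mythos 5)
Your proof is correct and takes essentially the same route as the paper's: apply condition (i), evaluate the resulting algebra homomorphism at a point where the image of the fundamental class is nonzero (resolving the a.e.-defined nature of flat forms by fixing representatives on a finite basis and choosing a good point of full measure), and use nondegeneracy of the Poincar\'e pairing to get injectivity. The paper handles the measure-zero subtlety in the same way you do, just phrased via choosing representatives rather than Lebesgue points.
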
 

\begin{proof}
By property (i) of scalable spaces, $X$ admits a presentation of its cohomology algebra via flat forms:
\[\ph: H^*(X;\mathbb{R}) \hookrightarrow \Omega^*_\flat X.\]
Instead of dealing with the whole algebra $\Omega^*_\flat X$, we focus just on the values of these points at some point $p\in X$: 
\[\ph_p: \alpha \mapsto \ph(\alpha)(p) \in \Lambda (T_p^* X) \cong \Lambda^* \mathbb{R}^n.\]
Since $X$ is a simply connected (and therefore orientable) closed manifold, one can consider a fundamental class $[X]\neq 0\in H^n(X)$ and chose the point $p$ from outside of the zero locus of $\ph([X])$.  Then whenever one has $a\smile b =[X]$ in cohomology, the value $\ph_p(a)$ has to be non-zero, since $\ph_p(a)\wedge \ph_p(b)=\ph_p([X])\neq 0$. And since the pairing of $H^i (X)$ and $H^{n-i}(X)$ is non-singular by Poincar\'e duality, no $a\in H^i(X)$ could go to $0\in \Lambda^i T^*_p$ under $\ph_p$, thus $\ph_p$ provides the required embedding.

There is a slight technical wrinkle here in that flat forms are only defined up to a measure zero set.  To make sense of the previous paragraph, one needs to choose representatives; the equation $\ph(a) \wedge \ph(b)=\ph([X])$ will then be true on a set of full measure for each choice of $a$ and $b$ in a basis for the cohomology.  Since $\ph([X])$ is nonzero on a set of positive measure, we can find a point $p$ at which $\ph_p([X]) \neq 0$ and all the equations are satisfied.
\end{proof}

The criterion we just proved allows to easily rule out scalability for some manifolds by simply comparing the ranks of $H^*(X)$ and $\Lambda(\mathbb{R}^n)$:
\begin{cor} \label{nScal1}
  If $X$ is a scalable closed $n$-manifold, then the rank of $H^k(X;\mathbb{R})$ is at most ${n \choose k}$.  In particular, $\# r(S^n \times S^n)$ is not scalable for $r>{2n \choose n}/2$, and $\# r(S^n \times S^m)$ is not scalable for $r>{m+n \choose n}$.
\end{cor}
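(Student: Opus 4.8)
The plan is to read off this corollary directly from Proposition \ref{LocRep}. That proposition gives, for any scalable closed $n$-manifold $X$, an embedding of graded algebras $H^*(X;\mathbb{R}) \hookrightarrow \Lambda^*\mathbb{R}^n$. Forgetting the multiplicative structure and looking in a fixed degree $k$, this restricts to an injective linear map $H^k(X;\mathbb{R}) \hookrightarrow \Lambda^k\mathbb{R}^n$, and since $\dim_\mathbb{R}\Lambda^k\mathbb{R}^n = \binom{n}{k}$ we immediately get $\rk H^k(X;\mathbb{R}) \le \binom{n}{k}$. This is the first assertion; everything else is a matter of computing Betti numbers of the two families of connected sums and checking that this inequality is violated.

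For that computation I would use the standard fact (via Mayer--Vietoris, removing an open ball from each summand) that for closed oriented $N$-manifolds one has $H^k(M_1 \# \cdots \# M_r;\mathbb{R}) \cong \bigoplus_{i} H^k(M_i;\mathbb{R})$ in every \emph{interior} degree $0<k<N$. Apply this to $M = \#r(S^n\times S^n)$, which is a closed manifold of dimension $N=2n$: since $H^n(S^n\times S^n;\mathbb{R})\cong\mathbb{R}^2$ and $0<n<2n$, we get $\rk H^n(M;\mathbb{R}) = 2r$. If $M$ were scalable, the bound above in degree $k=n$ would force $2r \le \binom{2n}{n}$, i.e.\ $r \le \binom{2n}{n}/2$; hence $\#r(S^n\times S^n)$ is not scalable once $r > \binom{2n}{n}/2$.

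The mixed case is identical. For $M=\#r(S^n\times S^m)$ with $n\neq m$, say $n<m$, this is a closed $(m+n)$-manifold, and since $0<n<m+n$ we get $\rk H^n(M;\mathbb{R}) = r$. Scalability would then require $r \le \binom{m+n}{n}$, so $\#r(S^n\times S^m)$ is not scalable for $r>\binom{m+n}{n}$; here one uses $\binom{m+n}{n}=\binom{m+n}{m}$, so applying the criterion in degree $n$ rather than $m$ loses nothing.

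I do not expect a real obstacle: the entire content sits in Proposition \ref{LocRep}, and the remaining work is the elementary bookkeeping above, the only subtlety being to make sure the cohomology degrees in question are interior so that connected sum is additive there. It is perhaps worth a remark that this rank inequality alone is a fairly crude obstruction --- for instance it does not detect that $\#r\,\mathbb{C}P^2$ is non-scalable for $r\ge 4$, where $\rk H^2 = r$ but $\binom{4}{2}=6$ --- which is why the finer algebraic obstructions of the rest of the section are needed.
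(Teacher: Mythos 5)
Your proof is correct and is exactly the argument the paper intends: the corollary is stated as an immediate consequence of Proposition \ref{LocRep} via the dimension count $\dim \Lambda^k\mathbb{R}^n = \binom{n}{k}$, with the Betti numbers of the connected sums computed just as you do. The closing remark about $\#r\,\mathbb{C}P^2$ correctly anticipates why the finer obstructions of Theorem \ref{subalg} are needed.
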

This restriction only holds for closed manifolds; for example, arbitrary wedges of spheres and manifold thickenings thereof are scalable.

Next, we point out some slightly more subtle reasons that certain cohomology algebras cannot be embedded in the alternating algebra $\bigwedge^*V$ for any finite-dimensional $\mathbb{R}$-vector space $V$.
\begin{thm} \label{subalg}
  The graded algebras listed below cannot be embedded in $\bigwedge^*V$ for any $V=\mathbb{R}^N$. The degree $n$ of a generator $x$ is shown by a superscript as in $x^{(n)}$. 
  \begin{enumerate}[(i)]
  \item For all $n \geq 1$, the algebra
    \[\Omega_{n,r}=\bigl\langle a_i^{(n)},b_i^{(n)}\:(1 \leq i \leq r) \mid
    \:\forall i,j:a_ib_i=a_jb_j, a_ia_j=b_ib_j=0;\ 
    \:\forall i \neq j: a_ib_j=0\bigr\rangle\]
    for $r>\frac{1}{2}{2n \choose n}$.  (On the other hand,
    $\Omega_{n,\frac{1}{2}{2n \choose n}}$ embeds in $\bigwedge^*\mathbb{R}^{2n}$.)
  \item For all even $n \geq 2$, the algebra
    \[\Sigma_{n,r}=\bigl\langle a_i^{(n)}\:(1 \leq i \leq r) \mid
    \: \forall i \neq j: a_i^2=a_j^2, a_ia_j=0\bigr\rangle\]
    for all $r>\frac{1}{2}{2n \choose n}$.  (On the other hand,
    $\Sigma_{n,\frac{1}{2}{2n \choose n}}$ embeds in $\bigwedge^*\mathbb{R}^{2n}$.)
  \item For all $n \geq 3$, the algebra
    \[\Pi_{n,r}=\bigl\langle a_i^{(2)}\:(1 \leq i \leq r) \mid
    \: \forall i \neq j: a_i^n=a_j^n, a_ia_j=0 \bigr\rangle\]
    for all $r>1$.
  \end{enumerate}
\end{thm}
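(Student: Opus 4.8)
The plan is to reduce all three non-embeddability claims to a question of linear algebra inside the single exterior power $\bigwedge^n\mathbb{R}^{2n}$ — or $\bigwedge^2\mathbb{R}^{2n}$ in part (iii) — by exploiting the fact that each of these algebras has a one-dimensional component in its top degree $2n$. A direct count of monomials in the free graded-commutative algebra on the listed generators, reduced modulo the quadratic relations, shows that $\Omega_{n,r}^{2n}$, $\Sigma_{n,r}^{2n}$ and $\Pi_{n,r}^{2n}$ are each spanned by a single nonzero class $\omega$, namely $a_1 b_1$, $a_1^2$ and $a_1^n$ respectively (and each of these algebras is concentrated in degrees $0$, $n$, $2n$). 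So if $\phi$ is an embedding of one of them into $\bigwedge^*V$ with $V=\mathbb{R}^N$, then $\phi(\omega)\neq 0$ in $\bigwedge^{2n}V$; picking a basis $e_1,\dots,e_N$ of $V$ in which $\phi(\omega)$ has a nonzero coefficient on $e_{i_1}\wedge\cdots\wedge e_{i_{2n}}$ and setting $U=\langle e_{i_1},\dots,e_{i_{2n}}\rangle$, the algebra homomorphism $\pi\colon\bigwedge^*V\to\bigwedge^*U$ extending the coordinate projection $V\to U$ sends $\phi(\omega)$ to a nonzero multiple of $e_{i_1}\wedge\cdots\wedge e_{i_{2n}}$. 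Thus $\pi\circ\phi$ realizes the defining relations of our algebra among elements of $\bigwedge^*\mathbb{R}^{2n}$, and it suffices to rule out such configurations there.

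For (i), put $\alpha_i=\pi\phi(a_i)$, $\beta_i=\pi\phi(b_i)\in\bigwedge^n\mathbb{R}^{2n}$ and identify $\bigwedge^{2n}\mathbb{R}^{2n}\cong\mathbb{R}$ so that $\pi\phi(\omega)=1$; the relations then say exactly that the wedge pairing $B(\xi,\eta)=\xi\wedge\eta$ has Gram matrix $\left(\begin{smallmatrix}0 & I_r\\ \pm I_r & 0\end{smallmatrix}\right)$ on $(\alpha_1,\dots,\alpha_r,\beta_1,\dots,\beta_r)$. This matrix is invertible, so the $2r$ vectors are linearly independent and $2r\leq\dim\bigwedge^n\mathbb{R}^{2n}=\binom{2n}{n}$, contradicting $r>\tfrac12\binom{2n}{n}$. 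For (ii), the classes $\alpha_i=\pi\phi(a_i)\in\bigwedge^n\mathbb{R}^{2n}$ are mutually orthogonal for the (symmetric, since $n$ is even) form $Q(\xi,\eta)=\xi\wedge\eta$ and have a common nonzero self-pairing, so they span a definite subspace and $r\leq\max(p,q)$, where $(p,q)$ is the signature of $Q$. The key point is that this signature is balanced: if $g$ is an orientation-reversing linear automorphism of $\mathbb{R}^{2n}$ (say the diagonal map $-1,1,\dots,1$), then the induced automorphism $g^*$ of $\bigwedge^n\mathbb{R}^{2n}$ satisfies $Q(g^*\xi,g^*\eta)=(\det g)\,Q(\xi,\eta)=-Q(\xi,\eta)$, so $Q\cong -Q$ and hence $p=q=\tfrac12\binom{2n}{n}$; therefore $r\leq\tfrac12\binom{2n}{n}$, a contradiction.

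For (iii), $\alpha_1=\pi\phi(a_1)\in\bigwedge^2\mathbb{R}^{2n}$ satisfies $\alpha_1^{\,n}=\pi\phi(\omega)\neq 0$, so $\alpha_1$ is a nondegenerate (symplectic) $2$-form, while $\alpha_2=\pi\phi(a_2)$ satisfies $\alpha_1\wedge\alpha_2=0$. For $n\geq 3$ the Lefschetz operator $L=\alpha_1\wedge(-)\colon\bigwedge^2\mathbb{R}^{2n}\to\bigwedge^4\mathbb{R}^{2n}$ is injective — by hard Lefschetz for the symplectic exterior algebra, since the isomorphism $L^{n-2}\colon\bigwedge^2\to\bigwedge^{2n-2}$ factors through $L$ (or by a short computation in a Darboux basis, which is exactly what fails at $n=2$) — so $\alpha_2=0$, contradicting $\alpha_2^{\,n}=\pi\phi(\omega)\neq 0$. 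The parenthetical positive statements use the same fact in the other direction: $Q$ is a nondegenerate split form on $\bigwedge^n\mathbb{R}^{2n}$ (symmetric of signature $(\tfrac12\binom{2n}{n},\tfrac12\binom{2n}{n})$ when $n$ is even, symplectic when $n$ is odd), so a hyperbolic basis $\alpha_1,\dots,\alpha_m,\beta_1,\dots,\beta_m$ with $m=\tfrac12\binom{2n}{n}$ gives an embedding $\Omega_{n,m}\hookrightarrow\bigwedge^*\mathbb{R}^{2n}$ via $a_i\mapsto\alpha_i$, $b_i\mapsto\beta_i$, and (for $n$ even) an orthonormal basis of a maximal positive-definite subspace gives $\Sigma_{n,m}\hookrightarrow\bigwedge^*\mathbb{R}^{2n}$ via $a_i\mapsto\alpha_i$; injectivity is automatic because these algebras live only in degrees $0,n,2n$.

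I expect the main obstacle to be pinning down the signature of the wedge form on $\bigwedge^n\mathbb{R}^{2n}$ for even $n$: both the sharp bound in (ii) and all the positive claims rest on its being split, and the one-line argument via an orientation-reversing automorphism is the heart of the matter. A secondary issue requiring care is the monomial bookkeeping that the top-degree component of each presented algebra is genuinely one-dimensional and nonzero, so that $\phi(\omega)\neq 0$ and the projection $\pi$ does not annihilate it.
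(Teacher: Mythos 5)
Your proposal is correct and follows the same overall strategy as the paper: use the nontriviality of the (one-dimensional) top class to cut down to $\bigwedge^*\mathbb{R}^{2n}$, note that the restriction stays injective by the nondegeneracy of the pairing, and then argue in $\bigwedge^n\mathbb{R}^{2n}$ (resp.\ $\bigwedge^2\mathbb{R}^{2n}$). Two of your sub-steps are implemented differently from the paper, both validly. For (ii), the paper exhibits the explicit split basis $dx_I\pm dx_{I^c}$ for the wedge form, whereas you deduce $Q\cong -Q$ from an orientation-reversing automorphism; your version is shorter and basis-free, the paper's has the advantage of directly handing you the forms needed for the positive statement $r=\tfrac12\binom{2n}{n}$. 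For (iii), the paper writes one symplectic form in Darboux coordinates and solves the resulting linear system by hand (which is exactly where $n\geq 3$ enters as $u_1=-u_2=u_3=-u_1$), whereas you invoke injectivity of the Lefschetz operator $L:\bigwedge^2\to\bigwedge^4$ for $n\geq 3$; these are the same computation packaged differently, and your appeal to hard Lefschetz is legitimate since it is a purely linear-algebraic fact for symplectic vector spaces. The only point you should make sure to nail down (as you yourself flag) is that the top-degree component of each presented algebra really is nonzero; this is immediate once one notes these are the cohomology rings of $\#r(S^n\times S^n)$, $\#r\mathbb{H}P^2$-type sums, and $\#r\mathbb{C}P^n$, or from the explicit embeddings in the parenthetical claims.
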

\begin{cor} \label{nScal}
  The following spaces are not scalable:
  \begin{enumerate}[(i)]
  \item $p\mathbb{C}P^2 \mathbin{\#} q \overline{\mathbb{C}P^2}$ when either $p>3$ or $q>3$.
  \item $p\mathbb{H}P^2 \mathbin{\#} q\overline{\mathbb{H}P^2}$ when either $p>35$ or $q>35$.
  \item $p\mathbb{O}P^2 \mathbin{\#} q\overline{\mathbb{O}P^2}$ when either $p>6435$ or $q>6435$.
  \item $\# r \mathbb{C}P^n$ for $n \geq 3$ and $r>1$.
  \end{enumerate}
\end{cor}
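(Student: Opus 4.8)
The plan is to derive everything from Theorem~\ref{subalg} by way of Proposition~\ref{LocRep}. The bridge is the elementary observation that a graded subalgebra of an algebra that embeds into some $\bigwedge^*\mathbb{R}^N$ again embeds into $\bigwedge^*\mathbb{R}^N$; so for each manifold $X$ in the list it is enough to locate inside $H^*(X;\mathbb{R})$ a graded subalgebra isomorphic to one of $\Sigma_{n,r}$ or $\Pi_{n,r}$ with $r$ in the range excluded by Theorem~\ref{subalg}. Then $H^*(X;\mathbb{R})$ itself admits no embedding into a finite exterior algebra, and the contrapositive of Proposition~\ref{LocRep} shows that the closed manifold $X$ is not scalable.

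For parts (i)--(iii) I would treat $P=\mathbb{C}P^2$, $\mathbb{H}P^2$, $\mathbb{O}P^2$ in one stroke: each is a simply connected closed manifold of dimension $2n$ for $n=2,4,8$ respectively, whose cohomology ring is $\mathbb{R}[x]/(x^3)$ with $\deg x = n$ and with $x^2$ a generator of $H^{2n}$, so that the cup-product pairing on the middle cohomology is the positive form $\langle 1\rangle$. First I would invoke the standard fact that under connected sum the middle-dimensional cup-product pairing adds orthogonally, so that $H^n(pP\mathbin{\#}q\overline{P};\mathbb{R})$ carries the form $p\langle 1\rangle\oplus q\langle -1\rangle$: there are classes $a_1,\dots,a_p$ with $a_i^2=\mu$ and classes $b_1,\dots,b_q$ with $b_j^2=-\mu$ for the orientation generator $\mu\in H^{2n}$, while $a_ia_j=0$ and $b_ib_j=0$ for $i\neq j$, and $a_ib_j=0$. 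The subalgebra generated by $a_1,\dots,a_p$ is then exactly $\Sigma_{n,p}$, and that generated by $b_1,\dots,b_q$ is $\Sigma_{n,q}$ (take $-\mu$ as the distinguished square). Since $\tfrac12\binom{2n}{n}$ equals $3$, $35$, $6435$ for $n=2,4,8$, Theorem~\ref{subalg}(ii) applies the moment $p$ or $q$ exceeds that value --- precisely the hypothesis of (i), (ii), (iii).

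For part (iv) I would look at $X=\#r\mathbb{C}P^n$ with $n\geq 3$, a closed oriented $2n$-manifold whose rational cohomology is generated by degree-$2$ classes $x_1,\dots,x_r$, one from each summand. For $1\leq k\leq n-1$ the classes $x_1^k,\dots,x_r^k$ form a basis of $H^{2k}(X;\mathbb{R})$, one has $x_i^n=\mu$ (the common orientation generator, the summands being consistently oriented), and $x_i^{n+1}=0$ for dimensional reasons. The place where $n\geq 3$ enters is that $x_ix_j$ for $i\neq j$ lies in degree $4$, strictly below $\dim X = 2n$; in that subtop range the cohomology ring of a connected sum is the direct sum of the reduced cohomology rings of the summands, so products of classes from distinct summands vanish, giving $x_ix_j=0$ and hence also $x_i^{n-1}x_j = x_i^{n-2}(x_ix_j) = 0$. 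A dimension count then identifies the subalgebra generated by $x_1,\dots,x_r$ with $\Pi_{n,r}$, which by Theorem~\ref{subalg}(iii) embeds in no $\bigwedge^*\mathbb{R}^N$ once $r>1$.

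The substantive work is not in applying Theorem~\ref{subalg} but in pinning down the ring structure of these connected sums: that the middle-dimensional form of $pP\mathbin{\#}q\overline{P}$ splits as an orthogonal sum, and that in $\#r\mathbb{C}P^n$ the cross-terms $x_ix_j$ ($i\neq j$) vanish below the top degree. Both are classical and can be extracted from Mayer--Vietoris, or more transparently from the pinch map $\#_i M_i \to \bigvee_i M_i$ collapsing the connecting spheres, which is a cohomology isomorphism in all degrees below $\dim M_i - 1$; combining it with the collapse maps $\bigvee_i M_i \to M_{(i)}$ onto single summands lets one pull the relevant classes back from the wedge, where positive-degree classes from distinct summands multiply to zero and the appropriate power of each generator is a generator of the top cohomology. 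With that bookkeeping in hand, each part of the corollary follows immediately from Theorem~\ref{subalg} and Proposition~\ref{LocRep}.
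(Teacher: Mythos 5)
Your proposal is correct and matches the paper's (largely implicit) argument: the paper likewise treats the corollary as an immediate consequence of Proposition~\ref{LocRep} together with Theorem~\ref{subalg}, identifying $\Sigma_{n,\max(p,q)}$ (with $n=2,4,8$ and threshold $\tfrac12\binom{2n}{n}=3,35,6435$) respectively $\Pi_{n,r}$ inside the relevant cohomology rings. The connected-sum ring computations you spell out are exactly the standard facts the paper leaves unstated, so there is no substantive difference.
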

\begin{proof}
  In all the cases, as above, we can restrict an embedding in $\bigwedge^*\mathbb{R}^N$ to a subspace $\mathbb{R}^{2n} \subset \mathbb{R}^N$ on which the top class is nontrivial.  Moreover, this restriction is still an embedding since each of the algebras satisfies Poincar\'e duality, in the sense that its multiplication defines a bilinear pairing between elements of degree $k$ and degree $2n-k$.

  \subsubsection*{Case (i):}
  As mentioned above, if $r>{2n \choose n}/2$, the number of $n$-dimensional generators is greater than the dimension of $\bigwedge^n \mathbb{R}^{2n}$, and therefore an embedding cannot exist.

  Conversely, suppose that $r={2n \choose n}/2$ and let $\mathbb{R}^{2n}$ be generated by $x_1,\ldots,x_{2n}$.  Then we can assign the generators to the ${2n \choose n}$ degree $n$ monomials generated by $dx_1,\ldots,dx_{2n}$, with $a_i$ and $b_i$ assigned to complementary choices.

  \subsubsection*{Case (ii):}
  Again suppose $\mathbb{R}^{2n}$ is generated by $x_1,\ldots,x_{2n}$, and fix a volume form $dx_1 \wedge \cdots \wedge dx_{2n}$.  Then $\wedge$ induces a symmetric bilinear form on $\bigwedge^n\mathbb{R}^{2n}$ of signature $\bigl({2n \choose n}/2,{2n \choose n}/2\bigr)$, with basis vectors
  \[dx_I+dx_{I^c} \quad and \quad dx_I-dx_{I^c}\]
  squaring to $1$ and $-1$ respectively.  Here $I$ is a choice of $n$ indices between $1$ and $2n$ and $I^c$ is its positively oriented complement.  Then we can assign ${2n \choose n}/2$ generators to forms of the form $dx_I+dx_{I^c}$.  On the other hand, if $r>{2n \choose n}/2$, then an assignment of these generators would imply the existence of a basis in which the bilinear form has $I_r$ as a minor, which cannot happen.

  \subsubsection*{Case (iii):}
  Assume that $n \geq 3$.  We would like to show that there cannot be two symplectic forms on $\mathbb{R}^{2n}$ whose wedge product is zero.  For some basis $x_1,\ldots,x_{2n}$, one of these is
  \[\omega=dx_1 \wedge dx_2+dx_3 \wedge dx_4+\cdots+dx_{2n-1} \wedge dx_{2n}\]
  and the other one is $\eta=\sum_{i<j} w_{ij}dx_i \wedge dx_j$ for some coefficients $w_{ij}$. For convenience, denote $w_{(2i-1)(2i)}$ by $u_i$. Then the condition $\omega \wedge \eta=0$ is a system of linear equations of the form
  \begin{align*}
    w_{k\ell} &= 0, &\text{for }w_{k\ell} \text{ other than }u_i,\\
    u_i+u_j &= 0, & \text{for }i \neq j.
  \end{align*}
  The only potentially non-zero coefficients are $u_i$, but even they vanish if $n\geqslant 3$: in that case $u_1=-u_2=u_3=-u_1$, and same goes for any $u_i$.  Thus $\eta=0$.
\end{proof}

\section{Phenomena in non-scalable spaces} \label{S4}

In this section we give two examples in non-scalable spaces in which a rescaling and convergence argument gives new asymptotic lower bounds on the Lipschitz constant of maps.  The first is a special case of Theorem \ref{tfae} but is proven using a more direct method in order to demonstrate the technique.  The second is a counterexample to Gromov's distortion conjecture.

While these examples can be understood perhaps more elegantly via maps from minimal models, we have chosen to make them accessible without any knowledge of rational homotopy theory.

\subsection{First examples of Lipschitz bounds}
We first go over some methods of establishing relationships between the Lipschitz constant and homotopy class of a map which date back to \cite{GrHED}.
\begin{prop} \label{pullback-bound}
  Let $X$ and $Y$ be Riemannian manifolds with boundary, and suppose that $f:X \to Y$ is an $L$-Lipschitz map.  Then for any flat form $\omega \in \Omega^n_\flat(Y)$,
  \[\lVert f^*\omega \rVert_\infty \leq L^n \lVert \omega \rVert_\infty.\]
\end{prop}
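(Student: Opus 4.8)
The plan is to reduce the statement to a pointwise fact of linear algebra and then upgrade it to flat forms by an approximation argument. The linear-algebra core is: if $A\colon V\to W$ is a linear map of inner product spaces with operator norm $\lVert A\rVert\le L$, then the induced pullback $A^*\colon\Lambda^n W^*\to\Lambda^n V^*$ has operator norm at most $L^n$ for the comass norms. Indeed, $\lVert A^*\eta\rVert_{\mathrm{comass}}$ is the supremum of $|\eta(Av_1\wedge\cdots\wedge Av_n)|$ over orthonormal $n$-tuples $v_1,\dots,v_n$ in $V$; the mass of the simple $n$-vector $Av_1\wedge\cdots\wedge Av_n$, i.e.\ the $n$-volume of the parallelepiped it spans, is at most $\prod_i\lVert Av_i\rVert\le L^n$, so by mass–comass duality $|\eta(Av_1\wedge\cdots\wedge Av_n)|\le L^n\lVert\eta\rVert_{\mathrm{comass}}$.

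When $f$ is smooth — or, using Rademacher's theorem, whenever $\omega$ is smooth and $f$ is merely Lipschitz — the proposition follows at once: at (almost) every $x\in X$ one has $(f^*\omega)_x=(df_x)^*\omega_{f(x)}$ with $\lVert df_x\rVert\le L$, so the pointwise comass of $f^*\omega$ is at most $L^n\lVert\omega\rVert_\infty$, and taking the essential supremum gives the claim. To handle a general flat form $\omega$ on $Y$, I would approximate: mollifying $\omega$ in coordinate charts and patching with a partition of unity produces smooth forms $\omega_k$ with $\lVert\omega_k\rVert_\infty\le(1+1/k)\lVert\omega\rVert_\infty$ and $\omega_k\to\omega$ in the weak$^\flat$ topology. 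Since the Lipschitz pullback of flat forms (Lemma \ref{flatLip}) is, by construction, dual to the pushforward $f_\#$ of flat chains, $\langle f^*\omega_k,S\rangle=\langle\omega_k,f_\#S\rangle\to\langle\omega,f_\#S\rangle=\langle f^*\omega,S\rangle$ for every flat chain $S$ in $X$, so $f^*\omega_k\to f^*\omega$ weak$^\flat$. The $L^\infty$-norm of a flat form is a supremum of weak$^\flat$-continuous functionals (integration against polyhedral $n$-chains of mass $\le1$), hence weak$^\flat$-lower semicontinuous, and therefore
\[
\lVert f^*\omega\rVert_\infty\le\liminf_k\lVert f^*\omega_k\rVert_\infty\le\liminf_k L^n\lVert\omega_k\rVert_\infty=L^n\lVert\omega\rVert_\infty ,
\]
where the middle inequality is the smooth case applied to each $\omega_k$.

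The point that needs care — and the main obstacle — is exactly where flat forms meet Lipschitz maps: because $\omega$ is defined only up to a set of measure zero on $Y$, the naive pointwise formula $(f^*\omega)_x=(df_x)^*\omega_{f(x)}$ can be meaningless when $f$ collapses a positive-measure subset of $X$ onto a null set of $Y$. The approximation argument above is designed to sidestep this entirely, at the cost of the (standard but nontrivial) inputs that mollification does not inflate the comass norm in the limit and that flat pullback is weak$^\flat$-continuous; alternatively one could invoke directly the description of Lipschitz pullback of flat forms in \cite{GKS}, where this delicacy is handled. Everything else is the elementary comass estimate of the first paragraph.
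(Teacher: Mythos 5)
Your proof is correct, but for the general (non-smooth) case it takes a genuinely different and longer route than the paper. The paper's proof of the flat case is a one-line global duality argument: $\lVert f^*\omega\rVert_\infty$ is the norm of $f^*\omega$ as a functional on flat chains with the mass norm, $\langle f^*\omega, S\rangle = \langle \omega, f_\# S\rangle \leq \lVert\omega\rVert_\infty \mass(f_\# S)$, and Lipschitz pushforward multiplies the mass of an $n$-chain by at most $L^n$ \cite[4.1.14]{Fed}. Notice that your own linear-algebra core is exactly the infinitesimal version of this: you bound the mass of the simple $n$-vector $Av_1\wedge\cdots\wedge Av_n$ by $L^n$ and invoke mass--comass duality on a single tangent space. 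Had you applied that same duality at the level of flat chains instead of tangent vectors, you would have obtained the paper's argument with no smoothing at all. Your mollification route works, but it imports three standard-yet-nontrivial inputs that the paper's route avoids entirely: that mollification does not inflate the comass in the limit, that the flat pullback of a smooth form along a Lipschitz map agrees a.e.\ with the Rademacher pointwise pullback, and weak$^\flat$ lower semicontinuity of the comass. You correctly identify and acknowledge each of these, and none is a gap, so the trade-off is simply one of economy: your version is self-contained modulo Whitney's smoothing theory, while the paper's version is self-contained modulo Federer's mass estimate for Lipschitz pushforwards.
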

\begin{proof}
  If $f$ is a smooth map and $\omega$ is a smooth form, then $\lVert f^*\omega \rVert_\infty$ is the supremal value of $f^*\omega$ over $n$-frames of vectors of length at most 1.  The pushforwards of these vectors have length at most $L$, so this supremum is bounded above by the supremal value of $\omega$ on $n$-frames of vectors of length at most $L$.  This is $L^n\lVert \omega \rVert_\infty$.
  
  In the general case, one can think of the $\infty$-norm as dual to the mass norm on flat chains, and use the fact that pushing forward by $f$ multiplies the mass of a flat $n$-chain by at most $L^n$ \cite[4.1.14]{Fed}.  This duality argument is used in \cite[\S X.8]{GIT} for flat forms in open subsets of $\mathbb{R}^n$, but it works in any space in which the definitions make sense.
\end{proof}

In particular, Proposition \ref{pullback-bound} restricts the action of an $L$-Lipschitz map on cohomology. That is the main tool of bounding the homotopy class of an $L$-Lipschitz map. Here is the most straightforward application:
\begin{cor} \label{hur-bound}
   If $\alpha \in \pi_n(Y)$ has nontrivial image under the rational Hurewicz homomorphism, the distortion of $\alpha$ is $O(L^n)$.  That is, the Lipschitz constant of any representative of $k\alpha$ is $\Omega(k^{1/n})$.
\end{cor}
\begin{proof}
  By assumption, $H^n(Y,\mathbb{Q})\xrightarrow{\alpha^*} H^n(S^n,\mathbb{Q})$ is nonzero, so there is a form $\omega \in \Omega^*(Y)$ such that for every representative $f:S^n \to Y$ of $\alpha$ $\int f^*(\omega)=v\neq 0$. Thus, using Proposition~\ref{pullback-bound} we can bound the degree $k$ of a representative $g$ of $k\alpha$ by
  \[\vol(S^n)\lVert g^*\omega \rVert_\infty \leqslant \vol(S^n)(\Lip g)^n \lVert \omega \rVert_\infty. \qedhere\]
\end{proof}
For example, when applied to a volume form on $S^n$, this proposition implies that the degree of an $L$-Lipschitz map $S^n \to S^n$ is at most $L^n$. 
Gromov observed that this estimate is sharp up to a multiplicative constant:
\begin{prop} \label{SntoSn}
  For every $d$, there is a map $S^n \to S^n$ of degree $d$ whose Lipschitz constant is $C(n)d^{1/n}$.
\end{prop}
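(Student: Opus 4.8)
The plan is to realize the degree by ``spreading out'' the map in the manner of Gromov: we cover the domain $S^n$ by about $d$ pairwise disjoint charts and let the map wrap each chart once around the target, at the reduced scale $d^{-1/n}$ dictated by volume. Two ingredients are prepared first. \emph{(1) A model bump:} a smooth degree-$1$ map $g\colon \mathbb{R}^n \to S^n$ which is equal to the basepoint $*$ outside the ball $B(0,\tfrac12)$ and is $C_0(n)$-Lipschitz; such a $g$ is built by radially reparametrizing the standard homeomorphism $\overline{B(0,\tfrac12)}/\partial \cong S^n$ so that it becomes constant near the bounding sphere, after which its Lipschitz constant depends only on $n$. \emph{(2) A packing estimate:} there is $c(n)>0$ such that for every $d\geq 1$ the round sphere $S^n$ contains $d$ pairwise disjoint metric balls $B_1,\dots,B_d$ of radius $r:=c(n)\,d^{-1/n}$. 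This follows by taking a maximal $2r$-separated set $N\subseteq S^n$ and comparing its cardinality to volume via $\vol B(x,s)\leq \omega_n s^n$: maximality gives $\vol S^n \leq |N|\,\omega_n(2r)^n$, so $|N|\geq d$ once $c(n)$ is small enough, and the balls of radius $r$ about points of $N$ are disjoint. For the finitely many small $d$ for which this $r$ exceeds the scale on which the metric of $S^n$ is $2$-bilipschitz to Euclidean, one simply uses any degree-$d$ map whatsoever and enlarges the final constant.

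Now define $f\colon S^n \to S^n$ by setting $f\equiv *$ on $S^n\setminus\bigcup_i B_i$ and, on each $B_i$, fixing a $2$-bilipschitz identification of $B_i$ with the Euclidean ball $B(0,r)$ and putting $f|_{B_i}(y)=g(y/r)$. Because $g$ is already equal to $*$ near $\partial B(0,\tfrac12)$, the several local formulas agree (all equal $*$) along the spheres $\partial B_i$, so $f$ is a well-defined continuous, indeed piecewise smooth, map. To read off its degree, pull back a volume form $\omega$ on the target: $f^*\omega$ is supported in $\bigcup_i B_i$, and for each $i$ a change of variables gives $\int_{B_i} f^*\omega = \int_{B(0,1/2)} g^*\omega = \deg g\cdot\int_{S^n}\omega$, so $\int_{S^n} f^*\omega = d\int_{S^n}\omega$ and $\deg f = d$.

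For the Lipschitz bound, the rescaled map $y\mapsto g(y/r)$ on $B(0,r)$ has Lipschitz constant at most $C_0(n)/r$, hence $f|_{B_i}$ is at most $2C_0(n)/r$-Lipschitz. The key point is that $f$ is \emph{locally constant} (identically $*$) on a neighborhood of every $\partial B_i$ and on all of $S^n\setminus\bigcup_i B_i$; therefore the local Lipschitz constant of $f$ is at most $2C_0(n)/r$ at every point of $S^n$, and since $S^n$ is a geodesic space this is also a global bound, giving $\Lip(f)\leq 2C_0(n)/r = \bigl(2C_0(n)/c(n)\bigr)d^{1/n}$. I expect the only places needing genuine care to be the construction of the model bump with a \emph{finite} Lipschitz constant — the reparametrization near $\partial B(0,\tfrac12)$ is precisely what prevents a blow-up — and this last passage from the per-chart bounds to a global bound, which is valid exactly because the charts are glued along regions where every local piece is the constant map. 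It is worth noting in passing that the exponent $1/n$ here is best possible: Proposition~\ref{pullback-bound} applied to a volume form gives $\deg f\leq \Lip(f)^n$, so the construction is sharp and shows that the bound of Corollary~\ref{hur-bound} cannot be improved for maps $S^n\to S^n$.
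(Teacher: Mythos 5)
Your proof is correct and follows the same Gromov-style strategy as the paper's: pack $d$ pairwise disjoint regions of diameter $\sim d^{-1/n}$ into the domain, place a rescaled degree-one bump map on each, and send everything else to the basepoint. The only difference is logistical: the paper gives $S^n$ the metric of $\partial[0,1]^{n+1}$ and tiles one face by $\ell^n$ sub-cubes with $\ell$ the smallest integer satisfying $\ell^n\geq d$, which makes the disjoint regions explicit and sidesteps both your maximal-separated-set packing lemma and the special treatment of small $d$.
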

\begin{proof}
  Let $\ell$ be smallest integer such that $\ell^n \geq d$; then $\ell \leq 2d^{1/n}$.  Give $S^n$ the metric of $\partial [0,1]^{n+1}$, which is bilipschitz to the round metric, and divide one of the faces into $\ell^n$ identical sub-cubes, $\ell$ to a side.  We map all other faces to a base point, and map $d$ of the sub-cubes to the sphere by a rescaling of a standard degree 1 map
  \[f:([0,1]^n,\partial[0,1]^n) \to (S^n,\text{pt}).\]
  The resulting map has degree $d$ and Lipschitz constant $\ell\Lip f \leq (2 \Lip f)d^{1/n}$.
\end{proof}

\subsection{The Hopf invariant}

Gromov's next example in \cite{GrHED} concerns maps $S^3 \to S^2$.  The group $\pi_3(S^2)$ is isomorphic to $\mathbb{Z}$; this isomorphism is realized by the \emph{Hopf invariant} of a map $f:S^3 \to S^2$, which can be computed in several ways:
\begin{itemize}
  \item Hopf's original definition: if $f$ is a smooth map, its Hopf invariant is the linking number between the preimages of any two regular values $p$ and $q$.
  \item J.~H.~C.~Whitehead's formula: if $f$ is a smooth map, its Hopf invariant is given by
  \[\Hopf(f)=\int_{S^3} \alpha \wedge f^*\omega\]
  where $\omega \in \Omega^2(S^2)$ is any closed form which integrates to 1 and $\alpha$ is any primitive for $f^*\omega$, which is a closed $2$-form in $S^3$ and therefore exact.  (Indeed, $\alpha$ can even be a primitive for $f^*\omega'$, for some $\omega'$ in the same cohomology class as $\omega$.)
  \item The algebraic topology definition: build a $4$-complex $X_f$ by attaching a $4$-cell to $S^2$ using $f$.  This has cohomology classes $w$ generating $H^2(S^2)$ and $b$ generating $H^4(X_f)$.  Then the Hopf invariant of $f$ is the number $\operatorname{Hopf}(f)$ such that
  \[w^2=\Hopf(f)b.\]
\end{itemize}
It is not hard to see the relationships between these definitions.  In Whitehead's formula one can choose $\omega$ and $\omega'$ to be concentrated near $p$ and $q$, respectively, and then choose $\alpha$ to be concentrated near an oriented surface filling $f^{-1}(q)$.  This shows that Hopf's definition is a special case of Whitehead's.

On the other hand, Whitehead's formula is related to the algebraic topology definition via Stokes' theorem.  The attaching map $f:S^3 \to S^2$ extends to a map $F:D^4 \to X_f$, and $f^*\omega$ extends to a $2$-form $\tilde\omega \in \Omega^2(D^4)$.  Then by Stokes' theorem,
\[\Hopf(f)=\int_{S^3} \alpha \wedge f^*\omega=\int_{D^4} \tilde\omega^2.\]

The Hopf map $h:S^3 \to S^2$ is the map of Hopf invariant $1$ which is the attaching map of the top cell of $\mathbb CP^2$.  Gromov's argument shows that the Hopf map $S^3 \to S^2$ has distortion $\Theta(L^4)$.  The upper bound uses Whitehead's formula.  By Proposition \ref{pullback-bound}, if $f$ is $L$-Lipschitz, then $\lVert f^*d\vol_{S^2} \rVert_\infty \leq L^2$, and by Lemma \ref{coIP} we can also choose $\alpha$ so that $\lVert \alpha \rVert_\infty \leq CL^2$.  Therefore
\[\operatorname{Hopf}(f) \leq \vol(S^3)\lVert\alpha\rVert_\infty\lVert f^*d\vol_{S^2} \rVert_\infty \leq CL^4.\]
Therefore the distortion of the Hopf map, whose Hopf invariant is 1, is $O(L^4)$.  Conversely, one can build an $O(L)$-Lipschitz map with Hopf-invariant $L^4$ by postcomposing $h$ with the degree $L^2$ map $f_L:S^2 \to S^2$ of Proposition \ref{SntoSn}.  For a regular value $p$, $(f_Lh)^{-1}(p)$ consists of $L^2$ circles, each linking with every other one.  This gives a total self-linking number of $L^4$.

\subsection{Homotopy classes of attaching maps}

The examples in this section use generalizations of the Hopf invariant.  We use all three definitions: Whitehead-like homotopy invariants will be derived using Stokes' theorem and used to produce geometric bounds.  To generate intuition for these bounds, one should pretend that the forms are Poincar\'e dual to submanifolds, as in Hopf's definition, and measure the thickness with which those submanifolds can be embedded.
\begin{figure}
  \begin{center}
  \begin{tikzpicture}[scale=1.8]
    \draw[gray,thick,-stealth] (0,-0.2) -- (0,2) node[anchor=west]{$y$};
    \draw[gray,thick,-stealth] (-0.2,0) -- (2,0) node[anchor=north]{$x$};
    \draw[line width=1.5pt] (-0.25,1) -- (2.05,1) node[anchor=north]{$\alpha$};
    \draw[line width=1.5pt] (1,-0.25) -- (1,2.05) node[anchor=east]{$\beta$};
    \node[inner sep=2.5pt,circle,draw=black,fill=black,thick] at (1,1) {};
    \node[anchor=north east] at (1,1) {$\alpha \wedge \beta$};
    \draw[line width=2.5pt] (1,1) .. controls (1.2,1.4) and (1.9,1.3) .. (2.05,1.6) node[anchor=south east]{$\gamma$};
  \end{tikzpicture}
  \end{center}
  \caption{In this example, $\alpha$ and $\beta$ are 1-forms of the form $f(x)dx$ and $f(y)dy$ (where $f$ is a bump function), and $\alpha\wedge\beta=d\gamma$.}\label{fig:dgamma}
\end{figure}
We suggest the following dictionary for the reader's marginal doodles:
\begin{itemize}
    \item Cup products correspond to intersections.
    \item Differentials correspond to boundaries of submanifolds (and hence primitives correspond to fillings).
    \item The $\infty$-norm of a form corresponds to the thickness of the embedded normal neighborhood of the Poincar\'e dual submanifold.
\end{itemize}
See Figure \ref{fig:dgamma} for an illustration.

Let $M$ be a smooth oriented $n$-manifold which is not a rational homology sphere, that is the fundamental cohomology class is a nontrivial cup product.  This implies that if $M$ is punctured, the puncture generates a nontrivial class in $\pi_{n-1}$, since the cup product structure distinguishes $M$ from $M^\circ \vee S^n$.  We can also think of this class as the top-dimensional attaching map in a cell structure for $M$ with one $n$-cell.

Let $\omega$ and $\eta$ be two forms of complementary dimension on $M$ such that $\int_M \omega \wedge \eta=1$, and let $f:S^{n-1} \to M^{(n-1)}$ be a map to the $(n-1)$-skeleton of $M$.  Then
\[I(f)=\int_{S^{n-1}} \alpha \wedge f^*\eta,\]
where $\alpha$ is any primitive for $f^*\omega$, is a homotopy invariant of $f$.  By Stokes' theorem, if $f$ is in the homotopy class of $N$ times the attaching map of the top cell, then $I(f)=N$.

\subsection{Nonlinear homotopies}
\begin{thm} \label{CP2sharp4}
  There is no $C>0$ such that every nullhomotopic $L$-Lipschitz mapping $S^3 \to \# 4\mathbb{C}P^2$ has a $CL$-Lipschitz nullhomotopy.
\end{thm}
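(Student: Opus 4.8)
The plan is to prove the contrapositive by a rescaling‑and‑convergence argument: from a hypothetical uniform linear bound on nullhomotopies into $M:=\#4\mathbb{C}P^2$ I would extract, after rescaling and taking a weak$^\flat$ limit of pulled‑back forms, an injective homomorphism of graded algebras $\Sigma_{2,4}\hookrightarrow\bigwedge^*\mathbb{R}^4$, which is impossible by Theorem~\ref{subalg}(ii) since $4>\tfrac12\binom{4}{2}$. (In the language of Theorem~\ref{tfae} this just says $M$ is non‑scalable, but the point of this section is to give the argument directly.) To set up, I would fix closed $2$‑forms $\omega_1,\dots,\omega_4\in\Omega^2(M)$ with pairwise disjoint supports — obtained from the connected‑sum decomposition by placing $\omega_i$ inside the $i$‑th punctured $\mathbb{C}P^2$ — normalized so that the $[\omega_i]$ are a basis of $H^2(M;\mathbb{R})$ and $\int_M\omega_i^2=1$. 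Then $\omega_i\wedge\omega_j=0$ identically for $i\neq j$; the classes $[\omega_i^2]$ all coincide (each integrates to $1$), so $\omega_i^2-\omega_j^2=d\rho_{ij}$ for fixed smooth $3$‑forms $\rho_{ij}$ on $M$; and $\|\omega_i\|_\infty,\|\rho_{ij}\|_\infty=O(1)$.

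Next I would build an efficient family of nullhomotopic test maps. The $3$‑skeleton of $M$ is $W=\bigvee_{i=1}^4 S^2$, and the attaching map of the top cell is $\theta=\sum_{i=1}^4[h_i]\in\pi_3(W)$, a sum of four Hopf maps (the intersection form of $\#4\mathbb{C}P^2$ is $I_4$, so there are no Whitehead‑product terms); composed with $W\hookrightarrow M$ it is nullhomotopic. For each $N$, realizing $N[h_i]$ on the $i$‑th sphere by postcomposing the Hopf map with a degree‑$\lceil N^{1/2}\rceil$ self‑map of $S^2$ as in Proposition~\ref{SntoSn} (exactly as in the discussion of the distortion of the Hopf map above), and precomposing with an $O(1)$‑Lipschitz pinch $S^3\to\bigvee_{i=1}^4 S^3$, I get a map $f_N\colon S^3\to W\subset M$ with $[f_N]=[N\theta]$ and $\Lip f_N=O(N^{1/4})$. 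The hypothesis then furnishes nullhomotopies $H_N\colon S^3\times[0,1]\to M$ with $\Lip H_N=O(N^{1/4})$.

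Now the analytic heart. By Lemma~\ref{flatLip} each $H_N^*\omega_i$ is a closed flat $2$‑form on $S^3\times[0,1]$, and Proposition~\ref{pullback-bound} gives $\|H_N^*\omega_i\|_\infty=O(N^{1/2})$ and $\|H_N^*\rho_{ij}\|_\infty=O(N^{3/4})$. A Stokes computation identifies $\int_{S^3\times[0,1]}(H_N^*\omega_i)^{\wedge2}$ with the generalized Hopf invariant $I$ of the attaching‑map discussion above, evaluated on $H_N|_{S^3\times\{0\}}=f_N$ — the $S^3\times\{1\}$ boundary term vanishes since $H_N$ is constant there — and since $[f_N]=[N\theta]$ this integral is $N$. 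Rescaling, set $\gamma_i^{(N)}:=N^{-1/2}H_N^*\omega_i$; then, uniformly in $N$: $\|\gamma_i^{(N)}\|_\infty=O(1)$, $d\gamma_i^{(N)}=0$, $\gamma_i^{(N)}\wedge\gamma_j^{(N)}=0$ for $i\neq j$, $\int(\gamma_i^{(N)})^{\wedge2}=1$, and $(\gamma_i^{(N)})^{\wedge2}-(\gamma_j^{(N)})^{\wedge2}=d(N^{-1}H_N^*\rho_{ij})$ with $\|N^{-1}H_N^*\rho_{ij}\|_\infty=O(N^{-1/4})$. Since bounded sets of flat forms are weak$^\flat$‑precompact (up to scale they lie in the unit ball of the dual of flat chains), I pass to a subsequence with $\gamma_i^{(N)}\to\gamma_i$ weak$^\flat$ for every $i$. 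Lemma~\ref{lem:dga} and lower semicontinuity of $\|\cdot\|_\infty$ then give: $d\gamma_i=0$; $\gamma_i\wedge\gamma_j=0$ for $i\neq j$; $\gamma_i^{\wedge2}=\gamma_j^{\wedge2}=:\mu$ for all $i,j$ (the primitives $N^{-1}H_N^*\rho_{ij}\to0$ in $L^\infty$, hence weak$^\flat$, and $d$ is weak$^\flat$‑continuous); $\int_{S^3\times[0,1]}\mu=1$; and $\|\gamma_i\|_\infty$ bounded. Evaluating at a point $x$ of a positive‑measure set where $\mu\neq0$ and the finitely many pointwise identities hold, the elements $\xi_i:=\gamma_i(x)\in\bigwedge^2 T_x^*(S^3\times[0,1])\cong\bigwedge^2\mathbb{R}^4$ satisfy $\xi_i\wedge\xi_j=\delta_{ij}\mu(x)$ with $\mu(x)\neq0$, so $a_i\mapsto\xi_i$ is the desired embedding of $\Sigma_{2,4}$.

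I expect the main obstacle to be getting this limit exactly right, and in particular the twin facts that pin down the exponents: that $N\theta$ has $O(N^{1/4})$‑Lipschitz representatives (which uses the $\Theta(L^4)$ distortion of the Hopf class), making $N^{-1/2}$ precisely the normalization for which $\|\gamma_i^{(N)}\|_\infty$ stays bounded while $\int(\gamma_i^{(N)})^{\wedge2}$ stays equal to $1$; and that the limiting $4$‑forms $\gamma_i^{\wedge2}$ genuinely coincide, so that one recovers $\Sigma_{2,4}$ (which does not embed) rather than some weaker algebra (which might). The weak$^\flat$‑continuity of $\wedge$ and the lower semicontinuity of the sup‑norm, both needed to carry estimates and algebraic relations through the limit, are exactly the content of the flat‑forms lemmas in Section~\ref{S:flat}.
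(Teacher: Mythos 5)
Your proposal is correct and is essentially the paper's own argument: rescale the forms pulled back under the hypothetical linear nullhomotopies, extract a weak$^\flat$ limit, use Stokes' theorem / the generalized Hopf invariant to see that the limiting squares are nonzero, and contradict the non-embeddability of $H^*(\# 4\mathbb{C}P^2;\mathbb{R})\cong\Sigma_{2,4}$ in $\bigwedge^*\mathbb{R}^4$ via Theorem \ref{subalg}(ii) (equivalently Corollary \ref{nScal}). The only differences are cosmetic: the paper parametrizes by $N^4$-fold multiples of the attaching map represented by $O(N)$-Lipschitz maps and performs the Stokes computation on $D^4$ rather than $S^3\times[0,1]$, which also sidesteps the harmless rounding issue in your $\lceil N^{1/2}\rceil$ (just take $N$ to be a perfect square).
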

We first note that $\#4 \mathbb{C}P^2$ can be given a CW structure with four $2$-cells corresponding to the copies of $\mathbb{C}P^1$ inside each $\mathbb{C}P^2$, together with one top cell whose attaching map in $\pi_3(\bigvee_4 S^2)$ is the sum of the elements corresponding to the Hopf fibration over each of the spheres.
\begin{proof}
  We start with a specific family of maps $f_N:S^3 \to \bigvee_4 S^2\subset \# 4\mathbb{C}P^2$ which are $C_0N$-Lipschitz; we will show by way of contradiction that there is no $C_1$ such that each $f_N$ extends to a $C_1N$-Lipschitz map $D^4 \to \# 4\mathbb{C}P^2$.

  Let $S_i$ be a copy of $\mathbb{C}P^1$ inside the $i$th copy of $\mathbb{C}P^2$.  We define the $f_N$ so that they map the outside of four fixed balls $B_1,\ldots,B_4$ to the basepoint $*$.  On each $B_i$, $f_N$ maps to $S_i$ with Hopf invariant $N^4$; specifically, as a composition
  \[B^3 \xrightarrow{\text{Hopf map}} S_i \xrightarrow{\text{degree }N^2} S_i,\]
  where the degree $N^2$ map has homeomorphic preimages of $S_i \setminus *$ lined up in a square grid within a fixed square inside $S^2$, similar to the construction in Lemma \ref{SntoSn}.  The maps $f_N$ are nullhomotopic since they are homotopic in $\bigvee_4 S^2$ to $N^4$ times the attaching map of the 4-cell.
  
  Suppose now that, for some $C_1$, every $f_N$ extends to a $C_1N$-Lipschitz map $h_N:D^4 \to \# 4\mathbb{C}P^2$.  Let $\alpha_i$ be forms with disjoint support Poincar\'e dual to the cycles represented by $S_i$; then for each $i$, $\alpha_i^2$ is a representative of the fundamental class of $\# 4\mathbb{C}P^2$, so let $\gamma_1,\gamma_2,\gamma_3 \in \Omega^3(\# 4\mathbb{C}P^2)$ satisfy $d\gamma_i=\alpha_i^2-\alpha_4^2$.  By Proposition \ref{pullback-bound}, $h_N$ changes the sup-norm of $k$-forms by a factor of at most $(C_1 N)^k$.  In particular, the scaled pullback forms $a_{i,N}=\frac{1}{N^2}h^*_N\alpha_i$ and $g_{i,N}=\frac{1}{N^4}h_N^*\gamma_i$ satisfy:
  \begin{align*}
    \lVert a_{i,N}\rVert_\infty &\leqslant \frac{1}{N^2}(C_1N)^2\lVert \alpha_i\rVert_\infty=C_1^2\lVert \alpha_i\rVert_\infty \\
    \lVert g_{i,N}\rVert_\infty &\leqslant \frac{1}{N^4}(C_1N)^3\lVert \gamma_i\rVert_\infty\xrightarrow{N \to \infty} 0.
  \end{align*}
  Thus, we can chose a subsequence $(N_k)$ so that $a_{i,N_k}$ and $g_{i,N_k}$ weak$^\flat$-converge to some $a_{i,\infty}$ and 0, respectively. Moreover, since $a_{i,N}^2-a_{i,N}^2=d g_{i,N}$ and weak$^\flat$ limits commute with $\wedge$ and $d$,  this means that $a_{i,\infty}^2=a_{4,\infty}^2$ for each $i$.

  On the other hand, by Stokes' theorem,
  \[\int_{D^4} (h_N^*\alpha_i)^2=\int_{S^3} f_N^*\alpha_i \wedge \eta\]
  where $\eta$ is a form satisfying $d\eta=f_N^*\alpha_1|_{S^3}$; that is, this integral is the Hopf invariant of the projection of $f_N$ to $S_1$.  Therefore, $\int_{D^4} a_{1,\infty}^2=1$; in particular $a_{1,\infty}^2$ is nonzero at some point.

  This means that we have constructed an embedding $H^*(\# 4\mathbb{C}P^2;\mathbb{R}) \to \bigwedge^* \mathbb{R}^4$; but by Corollary \ref{nScal}, this cannot exist.
\end{proof}

\subsection{Proof of Theorem \ref{grconj}}
\begin{thm}
  The distortion of the generator $\alpha \in \pi_5([(\# 4\mathbb{C}P^2) \times S^2]^\circ)$ is $o(L^6)$.
\end{thm}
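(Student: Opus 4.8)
The plan is to run the rescaling-and-weak$^\flat$-convergence scheme of the proof of Theorem~\ref{CP2sharp4}, the one new ingredient being the choice of homotopy invariant to track. Write $M=\#4\mathbb{C}P^2\times S^2$, a closed oriented $6$-manifold whose real cohomology is generated in degree $2$ by classes $x_1,\dots,x_4$ pulled back from the summands of $\#4\mathbb{C}P^2$ and by $s$ pulled back from $S^2$, with relations $x_ix_j=0$ $(i\neq j)$, $x_i^2=x_4^2$ for all $i$, $s^2=0$, and $sx_1^2=[M]^*$. The crucial point is that the subalgebra generated by $x_1,\dots,x_4$ is exactly $\Sigma_{2,4}$, which by Theorem~\ref{subalg}(ii) does not embed into $\bigwedge^*\mathbb{R}^N$ for any $N$. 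Since $M$ has no cells of dimension $1,3,5$ in the product cell structure, $M^\circ\simeq M^{(4)}$ and $\alpha$ is the attaching map of the unique $6$-cell. Fix smooth closed forms $\tilde x_1,\tilde s$ on $M$ representing $x_1,s$, scaled so that $\int_M\tilde s\wedge\tilde x_1^2=1$, and apply the invariant $I$ of the subsection ``Homotopy classes of attaching maps'' with $\omega=\tilde s$, $\eta=\tilde x_1^2$: for any $f\colon S^5\to M^\circ$ representing $N\alpha$ we have $I(f)=\int_{S^5}\beta\wedge f^*\eta=N$, where $d\beta=f^*\omega$. This identity is Stokes' theorem applied to the characteristic map of the $6$-cell of $M$ (which runs through the puncture, even though $\omega\wedge\eta$ is exact on $M^\circ$ itself).

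Suppose the distortion is not $o(L^6)$: then there are $c>0$, a sequence $L_k\to\infty$, and $L_k$-Lipschitz maps $g_k\colon S^5\to M^\circ$ representing $m_k\alpha$ with $m_k>cL_k^6$. Pulling back to $S^5$, Proposition~\ref{pullback-bound} gives $\lVert g_k^*\tilde x_i\rVert_\infty,\lVert g_k^*\tilde s\rVert_\infty=O(L_k^2)$, and since $H^2(S^5)=0$, Lemma~\ref{coIP} provides a primitive $\sigma_k$ of $g_k^*\tilde s$ with $\lVert\sigma_k\rVert_\infty=O(L_k^2)$. Fix once and for all $\gamma_i,\gamma_{ij}\in\Omega^3(M)$ with $d\gamma_i=\tilde x_i^2-\tilde x_4^2$ and $d\gamma_{ij}=\tilde x_i\tilde x_j$; their pullbacks have sup norm $O(L_k^3)$. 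Rescale: $\bar x_{i,k}=L_k^{-2}g_k^*\tilde x_i$, $\bar s_k=L_k^{-2}g_k^*\tilde s$, $\bar\sigma_k=L_k^{-2}\sigma_k$, $\bar\gamma_{i,k}=L_k^{-4}g_k^*\gamma_i$, $\bar\gamma_{ij,k}=L_k^{-4}g_k^*\gamma_{ij}$. Then $\bar x_{i,k},\bar s_k,\bar\sigma_k$ are uniformly sup-norm-bounded, $d\bar\sigma_k=\bar s_k$, $d\bar\gamma_{i,k}=\bar x_{i,k}^2-\bar x_{4,k}^2$, $d\bar\gamma_{ij,k}=\bar x_{i,k}\bar x_{j,k}$, and $\lVert\bar\gamma_{i,k}\rVert_\infty,\lVert\bar\gamma_{ij,k}\rVert_\infty=O(L_k^{-1})\to 0$. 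Substituting the rescalings into $I(g_k)=m_k$ gives
\[\int_{S^5}\bar\sigma_k\wedge\bar x_{1,k}^2=\frac{m_k}{L_k^6}>c.\]

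Pass to a subsequence along which $\bar x_{i,k},\bar s_k,\bar\sigma_k$ weak$^\flat$-converge to flat forms $\bar x_{i,\infty},\bar s_\infty,\bar\sigma_\infty$. By Lemma~\ref{lem:dga}, $\bar x_{i,\infty}^2=\bar x_{4,\infty}^2$ and $\bar x_{i,\infty}\bar x_{j,\infty}=0$ for $i\neq j$ (the $\bar\gamma$'s tend to $0$), and since integration over $S^5$ is pairing with a fixed flat chain, $\int_{S^5}\bar\sigma_\infty\wedge\bar x_{1,\infty}^2\geq c>0$; in particular $\bar x_{1,\infty}^2$ is a nonzero flat $4$-form. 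Choosing representatives at a point $p$ of a positive-measure set on which all the pointwise relations hold, we get $v_i:=\bar x_{i,\infty}(p)\in\bigwedge^2 T_p^*S^5\cong\bigwedge^2\mathbb{R}^5$ with $v_1^2\neq 0$, $v_i^2=v_4^2$, and $v_iv_j=0$ for $i\neq j$. Then $a_i\mapsto v_i$ defines a homomorphism $\Sigma_{2,4}\to\bigwedge^*\mathbb{R}^5$, and it is injective: wedging a relation $\sum c_i v_i=0$ with $v_1$ and using $v_1^2\neq 0$ forces all $c_i=0$, while $a_1^2\mapsto v_1^2\neq 0$. This contradicts Theorem~\ref{subalg}(ii), so the distortion is $o(L^6)$.

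The one genuinely new idea beyond Theorem~\ref{CP2sharp4} is the choice $\omega=\tilde s$, $\eta=\tilde x_1^2$ for the invariant: this is precisely what makes the $S^2$ factor contribute an extra dimension to the relevant exterior algebra, so that $\bar x_{1,\infty}^2$ has room to be nonzero in $\bigwedge^4\mathbb{R}^5$, whereas for $\#4\mathbb{C}P^2$ alone one is forced into $\bigwedge^4\mathbb{R}^3=0$ and learns nothing. I expect the main care to be needed in two routine-but-delicate spots: the usual flat-forms-are-only-defined-almost-everywhere bookkeeping when passing from ``$\bar x_{1,\infty}^2\neq 0$ as a form'' to a concrete point $p$ where all relations hold (handled exactly as in Theorem~\ref{CP2sharp4}), and verifying that $I$ is a well-defined homotopy invariant on $\pi_5(M^\circ)$ with $I(N\alpha)=N$, given that the ``top-class'' representative $\omega\wedge\eta$ becomes exact once the puncture is removed, so the Stokes computation must be carried out upstairs in $M$.
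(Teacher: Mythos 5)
Your proposal is correct and follows essentially the same route as the paper: the same rescale-and-weak$^\flat$-converge scheme, the same choice of invariant (a primitive of the pulled-back $S^2$ class wedged with $f^*\tilde x_i^2$, evaluated via Stokes in the unpunctured manifold using the degree over the puncture), and the same final contradiction with the non-embeddability of $\Sigma_{2,4}$ in $\bigwedge^*\mathbb{R}^N$. The one point to tighten is that with arbitrary smooth representatives $\tilde x_1,\tilde s$ the identity $I(f)=N$ holds only up to an $O(L^5)$ correction term $\int_{S^5}f^*\nu$ coming from the fact that $\tilde s\wedge\tilde x_1^2$ need not vanish on $M^\circ$ (the paper sidesteps this by pulling the forms back from the $4$-skeleton so that the product is identically zero there); this error is harmless after dividing by $L_k^6$.
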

This disproves the strong form of Gromov's conjecture (that any element with trivial Hurewicz image in $\pi_n(Y)$ has distortion $\Omega(L^{n+1})$) and in particular shows that not all formal spaces satisfy the conjecture.
\begin{proof}
  Write $Y=[(\# 4\mathbb{C}P^2) \times S^2]^\circ$.  We use an argument very similar to the previous one.  Take a purported sequence of $CN$-Lipschitz maps $f_N:S^5 \to Y$ representing $N^6\alpha$.

  Let $\alpha_1,\ldots,\alpha_4$ and $\beta$ be forms on $Y$ pulled back (along the natural projections) from standard generators of $H^2(\# 4 \mathbb{C}P^2)$ and $H^2(S^2)$ respectively. We may assume that the $\alpha_i$ have disjoint support and that $\alpha_i^2 \wedge \beta=0$ (for example by pulling back our original choice along the deformation retraction of $Y$ to its 4-skeleton.)  Finally, as before, we define $\gamma_1$, $\gamma_2$, $\gamma_3$ such that $d\gamma_i=\alpha_i^2-\alpha_4^2$. 
  
  Again by Proposition \ref{pullback-bound}, the $CN$-Lipschitz maps $f_N$ change the sup-norm of $k$-forms by a factor of at most $(CN)^k$. That implies that the norms of the forms
  \[\frac{1}{N^2}f^*_N\alpha_i, \quad \frac{1}{N^2}f_N^*\beta, \quad
  \frac{1}{N^4}f_N^*\gamma_i\]
  are bounded, and moreover the $\frac{1}{N^4}f_N^*\gamma_i$ converge to 0. So we can choose a sequence of $N$ such that these forms converge to some $\alpha_i^\infty$, $\beta^\infty$, and $0$, respectively.

  Moreover, the $\alpha_i^\infty$ are nonzero, by the following reasoning.  Let $\eta_N \in \Omega^1(S^5)$ be such that $d\eta_N=f_N^*\beta$.  Then
  \begin{equation} \label{Stokes}
    \int_{S^5} f_N^*\alpha_i^2 \wedge \eta_N=N^6.
  \end{equation}
  This can be seen by Stokes' theorem, as follows.  The $\alpha_i$ and $\beta$ can be extended to forms $\hat\alpha_i$ and $\hat\beta$ over the unpunctured $\hat Y=(\# 4\mathbb{C}P^2) \times S^2$, retaining all their properties except the vanishing of $\alpha_i^2\wedge \beta$; instead, $\hat\alpha_i^2 \wedge \hat\beta$ represents the fundamental class of $\hat Y$.  Let $h_N$ be a nullhomotopy of $f_N$ in $\hat Y$; we know that this nullhomotopy must have degree $N^6$ over the puncture, and therefore $\int_{D^6} h_N^*\hat\alpha_i^2 \wedge \hat\beta=N^6$.  By Stokes' theorem, \eqref{Stokes} holds.

  On the other hand, by the coisoperimetric inequality Lemma \ref{coIP}, we can take $\eta_N$ so that $\lVert\eta_N\rVert_\infty \lesssim N^2$; this allows us to choose a further subsequence in which the $N^{-2}\eta_N$ converge weakly to some $\eta^\infty$ with $d\eta^\infty=\beta^\infty$.  Moreover, since $\wedge$ commutes with weak limits, $\int_{S^5} (\alpha^\infty_i)^2 \wedge \eta^\infty=1$. Therefore, $\int_{S^5} \lvert (\alpha^\infty_i)^2 \rvert_\infty d\vol \gtrsim 1$, and in particular $(\alpha^\infty_i)^2$ is nonzero.

  In other words, $(\alpha^\infty_i)^2=(\alpha^\infty_j)^2 \neq 0$ for every $i$ and $j$, but $\alpha_i^\infty \wedge \alpha_j^\infty=0$ for every $i \neq j$.  By Theorem \ref{subalg} (case (ii), $n=2, r=4$) this cannot happen locally at any point.
\end{proof}

\section{Examples of scalable spaces} \label{S:scal}
In this section we prove that certain connected sums are in fact scalable by showing that they have the condition~(i) listed in Theorem~\ref{tfae}.  The basic idea is to  use Poincar\'e duality, building forms supported on the tubular neighborhoods of certain submanifolds.
\begin{thm} \label{scnm}
  For any $n\leq m$ and $r \leq {n+m-1 \choose n-1}$ the space
  $\# r(S^n \times S^m)$ is scalable.
\end{thm}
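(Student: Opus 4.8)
The plan is to verify condition (i) of Theorem~\ref{tfae}: construct a DGA map $\ph\colon H^*(\# r(S^n\times S^m))\to\Omega^*_\flat(\# r(S^n\times S^m))$ sending each class to a representative. Since $H^*$ of a connected sum of sphere products is generated by classes $x_i\in H^n$, $y_i\in H^m$ (for $1\le i\le r$) subject to the relations $x_ix_j=0$, $y_iy_j=0$ (in the mixed case also $x_iy_j=0$ for $i\ne j$), and $x_iy_i=x_jy_j=[\mathrm{pt\,dual}]$ the fundamental class, it suffices to produce \emph{flat} closed forms $\xi_i,\eta_i$ representing $x_i,y_i$ whose pairwise wedge products vanish on the nose except that $\xi_i\wedge\eta_i$ is one fixed volume form $\Theta$ for all $i$, and $\xi_i\wedge\xi_j=\eta_i\wedge\eta_j=0$ (and $\xi_i\wedge\eta_j=0$ for $i\ne j$ in the mixed case). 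Then $\ph$ extends uniquely as an algebra map, it is automatically a chain map since all the relevant forms are closed, and it represents cohomology by construction.

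First I would recall the standard handle picture: $\# r(S^n\times S^m)$ is obtained from $S^{n+m}$ by removing $r$ disjoint balls and gluing in $r$ copies of $(S^n\times S^m)\setminus\mathrm{ball}$; equivalently it has a handle decomposition with $r$ $n$-handles and $r$ $m$-handles, and the cores give embedded spheres $A_i\cong S^n$, $B_i\cong S^m$ that can be taken pairwise disjoint, with $A_i$ meeting $B_i$ transversally in a single point $p_i$ and $A_i\cap B_j=\emptyset$ for $i\ne j$. Using Poincaré duality I want $\xi_i$ to be (a smooth bump form) Poincaré dual to $B_i$ — i.e.\ supported in a thin tubular neighborhood $\nu(B_i)$, a Thom form — and $\eta_i$ dual to $A_i$, supported in $\nu(A_i)$. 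Because the submanifolds are disjoint except for the single intersections $A_i\pitchfork B_i=\{p_i\}$, disjointness of tubular neighborhoods forces $\xi_i\wedge\xi_j=\eta_i\wedge\eta_j=0$ pointwise (their supports are disjoint for $i\ne j$), $\xi_i\wedge\eta_j=0$ for $i\ne j$, and $\xi_i\wedge\eta_i$ is a bump form concentrated near $p_i$ integrating to $1$. The remaining issue is that a priori these $r$ bumps near the $r$ different points $p_i$ are different forms, whereas the relation demands literal equality $\xi_i\wedge\eta_i=\xi_j\wedge\eta_j$. This is where I expect to need a trick — and where the hypothesis $r\le\binom{n+m-1}{n-1}$ enters.

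The main obstacle, then, is arranging the $r$ ``intersection bumps'' to be a single common form. Here is the scheme I would try, which explains the binomial bound. Work in the model where everything of interest happens inside one coordinate ball $U\cong\mathbb{R}^{n+m}$, by first isotoping all the $A_i$ and $B_i$ so that near a common region they look like coordinate $n$- and $m$-planes through a common point $p=0$, passing to $\Omega^*_\flat$ only for this local normalization (flatness buys us the freedom to take genuinely disjoint, sharply cut off supports without smoothness worries at the seams, and Lemma~\ref{flatQI} guarantees we still compute the right cohomology). In $\mathbb{R}^{n+m}=\mathbb{R}^n\times\mathbb{R}^m$ with coordinates $x_1,\dots,x_n,y_1,\dots,y_m$, fix a compactly supported bump $n$-form $\beta(x)\,dx_1\wedge\cdots\wedge dx_n$ of integral $1$ on $\mathbb{R}^n$ and similarly $\beta(y)\,dy_1\wedge\cdots\wedge dy_m$ on $\mathbb{R}^m$; I want the $i$-th pair to use an $\mathrm{SO}$-rotated coordinate frame so that $\eta_i$ ``points along'' the $i$-th copy of $\mathbb{R}^n$ and $\xi_i$ along the $i$-th copy of $\mathbb{R}^m$, yet all $r$ products $\xi_i\wedge\eta_i$ equal the \emph{same} standard volume bump $\Theta=\beta(x)\beta(y)\,dx\wedge dy$. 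Whether one can choose $r$ such pairs of complementary ``coordinate directions'' realizing the same volume form while keeping the cross-wedges zero is exactly a question about decompositions of $\bigwedge^n\mathbb{R}^{n+m}$, and the count of available independent such decompositions is $\binom{n+m-1}{n-1}$ — matching the hypothesis; I would prove this combinatorial-linear-algebra statement (it is dual in spirit to the assignments of monomials used in the proof of Theorem~\ref{subalg}, cases (i)--(ii)) and then globalize by a partition-of-unity / excision argument, patching the locally-defined $\xi_i,\eta_i$ to honest flat forms on all of $\# r(S^n\times S^m)$ that still satisfy the on-the-nose relations, since away from $U$ their supports are disjoint and there is nothing to check. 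Once the relations hold exactly, $\ph$ is a well-defined DGA map and condition (i) — hence scalability — follows from Theorem~\ref{tfae}.
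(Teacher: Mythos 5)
Your local picture is the right one, and it matches the paper's: near a single point the dual forms are coordinate monomials $\bigwedge_{i\in I}dx_i$ indexed by $n$-element subsets $I$ of an $(n+m)$-element set, and products vanish for the combinatorial reason that any two such subsets (and the relevant complements) intersect. The family realizing the bound is the ${n+m-1 \choose n-1}$ subsets of size $n$ containing a fixed element; that it is ``intersection-complete'' uses $n\le m$, and this is exactly how the paper deduces Theorem \ref{scnm} from its more general Theorem \ref{sci}. The genuine gap is in your globalization. Once you isotope all the core spheres to pass through a common point as coordinate planes, their tubular neighborhoods all overlap there, so near that point the vanishing of products must come from the monomial algebra rather than from disjointness of supports --- fine --- but the spheres must then separate again into the configuration of disjoint, Whitehead-linked spheres prescribed by the attaching map of the top cell. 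In the transition region between the ``coordinate planes through the origin'' picture and the ``disjoint linked spheres'' picture, the supports of the $\xi_i,\eta_i$ overlap in a complicated way; the on-the-nose relations there are a nontrivial condition that a partition of unity does not deliver, since cutting off and regluing closed forms destroys closedness, and restoring closedness by correction terms generically destroys the product relations. Your sentence ``away from $U$ their supports are disjoint and there is nothing to check'' is precisely where the work lies. (You also defer the combinatorial lemma, but that part is easy once the family of subsets containing a common element is written down.)

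The paper devotes most of Section \ref{S:scal} to exactly this interpolation (Lemma \ref{coordforms}, the pinch-off Lemma \ref{pinch}, and Section \ref{pinchproof}): the forms are built on an annulus $S^k\times[0,T]$ as duals of a stratified bordism in which the coordinate spheres $S(I)$ --- which intersect one another in lower-dimensional coordinate spheres, so they form a genuine stratification rather than a disjoint union --- are ``pinched off'' one at a time into parallel disjoint spheres, with the leftover pieces absorbed into lower strata that are subsequently collapsed, and with the forms constrained throughout to respect the product structure of each stratum's neighborhood so that the wedge relations can be checked at every stage. So your proposal has the correct local algebra and the correct numerology, but it is missing the mechanism connecting the local model to the attaching map, and that mechanism is the actual content of the proof.
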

\noindent In particular, once we combine this result with Prop.~\ref{nScal1}, we know the exact cutoff for scalability for spaces of the form $\# r(S^n \times S^n)$; for $m \neq n$ there remains a gap.  One corollary is as follows:
\begin{cor}
  The following spaces are scalable:
  \begin{itemize}
  \item $p\mathbb{C}P^2 \mathbin{\#} q\overline{\mathbb{C}P^2}$, $0 \leq p,q \leq 3$.
  \item $p\mathbb{H}P^2 \mathbin{\#} q\overline{\mathbb{H}P^2}$, $0 \leq p,q \leq 35$.
  \item $p\mathbb{O}P^2 \mathbin{\#} q\overline{\mathbb{O}P^2}$, $0 \leq p,q \leq 6435$.
  \end{itemize}
\end{cor}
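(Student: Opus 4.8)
The plan is to establish condition~(i) of Theorem~\ref{tfae} by a direct construction modelled on the proof of Theorem~\ref{scnm}. Write $Y=p\,\mathbb{F}P^2\mathbin{\#}q\,\overline{\mathbb{F}P^2}$ for $\mathbb{F}\in\{\mathbb{C},\mathbb{H},\mathbb{O}\}$, and let $2n\in\{2,4,8\}$ be the degree of the generator of $H^*(\mathbb{F}P^2;\mathbb{R})$, so that $\dim Y=4n$ and $H^*(Y;\mathbb{R})$ lives in degrees $0$, $2n$, $4n$ with $H^{2n}(Y;\mathbb{R})\cong\mathbb{R}^{p+q}$ carrying the intersection form $Q=\langle1\rangle^{\oplus p}\oplus\langle-1\rangle^{\oplus q}$ (each $\mathbb{F}P^2$ contributing a $+1$, each $\overline{\mathbb{F}P^2}$ a $-1$). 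These manifolds are formal (Table~\ref{table}), so it suffices to build a dga homomorphism $\ph\colon H^*(Y;\mathbb{R})\to\Omega^*_\flat Y$ carrying each class to a representative; after diagonalizing $Q$ in a basis $x_1,\dots,x_{p+q}$ of $H^{2n}(Y;\mathbb{R})$, this amounts to exhibiting closed flat $2n$-forms $\omega_i$ representing the $x_i$ and a flat $4n$-form $\nu$ with $\int_Y\nu=1$ for which $\omega_i\wedge\omega_j=Q_{ij}\,\nu$ holds on the nose, not merely up to a coboundary.

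First I would fix the pointwise model. Exactly as in the proof of Case~(ii) of Corollary~\ref{nScal}, the wedge product makes $\bigwedge^{2n}\mathbb{R}^{4n}$ into a $\binom{4n}{2n}$-dimensional space with a nondegenerate symmetric pairing (symmetric since $2n$ is even) valued in $\bigwedge^{4n}\mathbb{R}^{4n}\cong\mathbb{R}$, and the vectors $dx_I\pm dx_{I^c}$, one pair for each complementary pair $\{I,I^c\}$ of $2n$-element index sets, form a mutually orthogonal basis with squares $+1$ and $-1$; hence the pairing has signature $\bigl(\tfrac12\binom{4n}{2n},\tfrac12\binom{4n}{2n}\bigr)$. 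Since $\tfrac12\binom{4}{2}=3$, $\tfrac12\binom{8}{4}=35$ and $\tfrac12\binom{16}{8}=6435$, the hypotheses $0\le p,q\le\tfrac12\binom{4n}{2n}$ say precisely that $Q$ embeds isometrically into this pairing, and I would realize such an embedding exactly as in the cited proof, assigning the $p$ classes of positive square to vectors $dx_I+dx_{I^c}$ for distinct $I$ and the $q$ of negative square to vectors $dx_J-dx_{J^c}$ for distinct $J$; calling these constant $2n$-forms $v_1,\dots,v_{p+q}$, one has $v_i\wedge v_j=Q_{ij}\,\omega_0$ where $\omega_0$ is a fixed nonzero multiple of $dx_1\wedge\cdots\wedge dx_{4n}$. (When $p=q$ this step, and indeed the construction below, can be bypassed: then $Q\cong H^{\oplus p}$ over $\mathbb{Q}$, so $Y$ is rationally equivalent to $\# p(S^{2n}\times S^{2n})$, which is scalable by Theorem~\ref{scnm} because $p\le\binom{4n-1}{2n-1}=\tfrac12\binom{4n}{2n}$, and Theorem~\ref{props}\eqref{Qinv} applies. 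The real content is the unbalanced and one-sided cases such as $3\,\mathbb{C}P^2$, which are not rationally sphere products.)

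The main step is to globalize the $v_i$, following the construction behind Theorem~\ref{scnm}. I would represent $x_i$ by the copy $\Sigma_i$ of $\mathbb{F}P^1$ in the $i$-th summand, and arrange, via an ambient homotopy through immersions, that inside one fixed coordinate ball $B\cong\mathbb{R}^{4n}\subset Y$ the cycle $\Sigma_i$ is the union of the two transverse coordinate $2n$-planes dual to the two monomials of $v_i$ (the second plane oriented so as to contribute the correct sign), while outside $B$ the $\Sigma_i$ are pairwise disjoint and embedded with trivial normal bundle; this is possible because the self-intersection numbers $Q_{ii}=\pm1$ are realized by the two sheets crossing at the centre of $B$ and because the mutual intersection numbers vanish (in dimension $4$ the isotopies are done by hand from the standard connected-sum picture, and in dimensions $8$ and $16$ the Whitney trick is also available). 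Using flat forms to make the cutoffs sharp, I would take $\omega_i$ supported on a thin tube around $\Sigma_i$, equal to the constant form $v_i$ on all of $B$ and to a closed Thom form of the tube elsewhere. Then outside $B$ distinct $\omega_i$ have disjoint supports and each $\omega_i^2=0$ (because of the trivial normal bundle there), whereas on $B$ one has $\omega_i\wedge\omega_j=v_i\wedge v_j=Q_{ij}\,\omega_0$; so, normalizing $B$ so that $\int_B\omega_0=1$ and setting $\nu:=\omega_0$ (extended by zero), the identities $\omega_i\wedge\omega_j=Q_{ij}\,\nu$ hold everywhere. Defining $\ph(x_i)=\omega_i$ and $\ph([Y])=\nu$ and extending multiplicatively yields condition~(i), so $Y$ is scalable.

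The crux is this last step. Taking $\omega_i$ to be Thom forms of pairwise \emph{disjoint} submanifolds---the obvious first attempt---only makes $\omega_i\wedge\omega_j$ cohomologous to $Q_{ij}\nu$, whereas Theorem~\ref{tfae}(i) demands an honest equality of forms; the resolution, which is exactly what the proof of Theorem~\ref{scnm} accomplishes, is to concentrate the entire cup-product structure of $Y$ inside one ball and match it there with the constant-coefficient model of the first step, so that the mutual products $\omega_i\wedge\omega_j$ with $i\neq j$ vanish for free from the monomial structure and only the self-products need the crossing point. The thresholds $3$, $35$, $6435$ are precisely the sizes of the orthogonal configurations of $2n$-planes in $\mathbb{R}^{4n}$ that make this possible, which is why they are also the bounds appearing in Corollary~\ref{nScal}.
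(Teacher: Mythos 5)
Your route is genuinely different from the paper's, and it has a gap that I believe is fatal as written. The paper does \emph{not} construct forms on $p\mathbb{F}P^2\mathbin{\#}q\overline{\mathbb{F}P^2}$ directly: it ``symmetrizes'' $\#\max(p,q)(S^{2n}\times S^{2n})$ by attaching $(2n+1)$-cells identifying the two sphere factors in $|p-q|$ of the summands, observes that the forms from Theorem~\ref{scnm} extend over these cells, notes that the resulting space is formal with the same rational cohomology algebra as $p\mathbb{F}P^2\mathbin{\#}q\overline{\mathbb{F}P^2}$, and then invokes rational homotopy invariance of scalability (Theorem~\ref{props}(a)). All of the hard geometry is thereby confined to the sphere-product case, where the classes to be represented are \emph{isotropic}.

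The problem with your direct construction is exactly that the classes $x_i$ with $x_i^2=\pm1$ are not isotropic, and your local model cannot be globalized. Concretely: if $\omega_i$ restricted to $B$ is $\lambda$ times the sum of Thom forms of two transverse planes, then $\int_Y\omega_i^2=2\lambda^2$ (the cross terms contribute $2$, as in $(dx_I+dx_{I^c})^2=2\,dx_I\wedge dx_{I^c}$), so $x_i^2=1$ forces $\lambda=1/\sqrt2$; but then the underlying cycle represents $\sqrt2\,x_i$, which is not integral, so it cannot be an immersed copy of $\mathbb{F}P^1$ (equivalently, a single transverse double point changes self-intersection by $2$, so an immersed sphere with one double point and trivial normal bundle has even square). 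If instead the two sheets close up into two \emph{separate} embedded surfaces with trivial normal bundles meeting once, you would be writing $x_i$ as a real combination of two isotropic integral classes; for the definite cases $\#2\mathbb{C}P^2$, $\#3\mathbb{C}P^2$ (and their quaternionic and octonionic analogues) the intersection form has no nonzero isotropic vectors at all, so this is impossible. These definite, unbalanced cases are precisely the ones not covered by your $p=q$ shortcut, so the core of the corollary is not established. (Separately, even where the local model is consistent, the passage from ``constant forms on $B$'' to ``Thom forms of disjoint tubes outside $B$'' with the products holding \emph{exactly} in the transition region is the entire content of Lemma~\ref{coordforms} and the pinch-off machinery of \S\ref{pinchproof}, which is tailored to Whitehead-product attaching maps; for Hopf-type attaching maps it would need to be reworked, and you only assert it.) The fix is to follow the paper: prove the statement for symmetrized sphere products and transport it by rational equivalence.
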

\begin{proof}
  We start by ``symmetrizing'' $\# r(S^n \times S^n)$ by adding $(n+1)$-cells which make the two factors of each $S^n \times S^n$ homotopic to each other. The resulting space $\Sigma_{n,r}$ is still scalable because the inclusion map
  \[\# r(S^n \times S^n) \hookrightarrow \Sigma_{n,r}\]
  induces an injection on cohomology, and the corresponding forms are easy to extend over the additional cells.

  Recall that formal spaces, which will be discussed in more detail in the next section, have a rational homotopy type that is a ``formal consequence'' of their rational cohomology algebra.  In particular, two formal spaces that have the same rational cohomology algebra are rationally equivalent, as is the case for $\#r\mathbb{C}P^2$ and $\Sigma_{2,r}$.  Thus by Theorem \ref{props}(\ref{Qinv}), $\#2\mathbb{C}P^2$ and $\#3 \mathbb{C}P^2$ are scalable.

  Similarly, $\mathbb{C}P^2 \mathbin{\#} \overline{\mathbb{C}P^2}$ is formal and has the same rational cohomology algebra as $S^2 \times S^2$.  More generally, $p\mathbb{C}P^2 \mathbin{\#} q\overline{\mathbb{C}P^2}$ is rationally equivalent to $\# \max(p,q)(S^2 \times S^2)$ with $|q-p|$ of the connected summands ``symmetrized''.

  The quaternionic and octonionic cases are similar.
\end{proof}
On the other hand, our results say nothing about ``mixed'' connected sums such as $(S^2 \times S^2) \mathbin{\#} \mathbb{C}P^2$, since while their real cohomology algebras are isomorphic to ones we understand, their rational cohomology algebras are not.  If we knew that scalability is a real homotopy invariant, we would understand it for all simply connected 4-manifolds.

One can make a more general statement than Theorem \ref{scnm} to treat the case of different summands, although the condition on $r$ becomes a bit convoluted.  We say a family $\mathcal{I}$ of subsets $I \subset \{0,1,\dots,k\}$ is \emph{intersection-complete} if for any $I,J\in \mathcal{I}$ all four intersections of $I$ or $I^c$ with $J$ or $J^c$ are non-empty.

\begin{thm} \label{sci}
  For any intersection-complete family $\mathcal{I}$ of subsets $I\subset \{0,1,\dots,k\}$, the following space is scalable:
  \[X_{\mathcal{I}}:=\#_{I\in\mathcal{I}} \bigl(S^{|I|} \times S^{k+1-|I|}\bigr).\]
\end{thm}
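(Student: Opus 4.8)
The plan is to verify condition (i) of Theorem~\ref{tfae}: produce a homomorphism of differential graded algebras $\phi\colon H^*(X_{\mathcal I};\mathbb R)\to\Omega^*_\flat(X_{\mathcal I})$ sending each class to a representative (in fact it will land in smooth forms). Write $M_I=S^{|I|}\times S^{k+1-|I|}$, let $a_I\in H_{|I|}$ and $b_I\in H_{k+1-|I|}$ be the classes of the two sphere factors of the $I$-th summand, let $\alpha_I,\beta_I$ be their Poincar\'e duals (of degrees $k+1-|I|$ and $|I|$), and $\nu\in H^{k+1}$ the fundamental coclass. Standard facts about the cohomology of connected sums give that $\{1\}\cup\{\alpha_I,\beta_I\}_{I\in\mathcal I}\cup\{\nu\}$ is a basis and that the only nonzero products of positive-degree basis elements are $\alpha_I\smile\beta_I=\nu$, one for each $I\in\mathcal I$ (in particular $\alpha_I^2=\beta_I^2=0$ and $\alpha_I\smile\beta_J=\alpha_I\smile\alpha_J=\beta_I\smile\beta_J=0$ for $I\neq J$). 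Thus $\phi$ is determined by closed representatives $\tilde\alpha_I,\tilde\beta_I,\tilde\nu$, and the only relations to check are that $\tilde\alpha_I\wedge\tilde\beta_I=\tilde\nu$ for every $I$ while every other wedge of two of the $\tilde\alpha,\tilde\beta$'s vanishes \emph{identically}; everything else follows for degree reasons.

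Next I would fix a geometric model of $X_{\mathcal I}$ adapted to this: a connected sum with a distinguished ``hub'' chart $U\cong\mathbb R^{k+1}$, coordinates $x_0,\dots,x_k$ centred at a point $p_0$, together with smoothly embedded spheres $\hat a_I\cong S^{|I|}$, $\hat b_I\cong S^{k+1-|I|}$ ($I\in\mathcal I$) representing $a_I,b_I$, such that: inside $U$, $\hat a_I$ is the coordinate plane $\{x_j=0:j\notin I\}$ and $\hat b_I$ is $\{x_i=0:i\in I\}$; each of $\hat a_I,\hat b_I$ leaves $U$ and closes up inside the $I$-th summand, with these ``caps'' disjoint from one another and meeting the hub pieces only at $p_0$, so that $\hat a_I\cap\hat b_I=\{p_0\}$ transversally; the different summands (hence the caps for different $I$) are disjoint; and the normal bundles of $\hat a_I,\hat b_I$, which are trivial, are framed compatibly with the coordinate structure over $U$. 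It is harmless that inside $U$ the various $\hat a_I,\hat b_I$ intersect one another along positive-dimensional coordinate subspaces.

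Now fix a bump $\rho\colon\mathbb R\to[0,\infty)$ with $\int\rho=1$ supported near $0$, put $\epsilon_I=\pm1$ so that $\bigwedge_{j\notin I}dx_j\wedge\bigwedge_{i\in I}dx_i=\epsilon_I^{-1}\,dx_0\wedge\cdots\wedge dx_k$, and on $U$ set
\[
\tilde\alpha_I=\epsilon_I\prod_{j\notin I}\rho(x_j)\bigwedge_{j\notin I}dx_j,\qquad
\tilde\beta_I=\prod_{i\in I}\rho(x_i)\bigwedge_{i\in I}dx_i,\qquad
\tilde\nu=\prod_{m=0}^{k}\rho(x_m)\,dx_0\wedge\cdots\wedge dx_k.
\]
These are closed (each partial derivative that would appear in $d$ meets a repeated $dx$), and, using the framings, $\tilde\alpha_I$ and $\tilde\beta_I$ extend to globally defined closed forms on $X_{\mathcal I}$ supported in thin tubular neighbourhoods of $\hat a_I,\hat b_I$ and representing $\alpha_I,\beta_I$, while $\tilde\nu$ is supported in a small box in $U$ and represents $\nu$ once normalized. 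On $U$ one computes $\tilde\alpha_I\wedge\tilde\beta_I=\bigl(\prod_m\rho(x_m)\bigr)dx_0\wedge\cdots\wedge dx_k=\tilde\nu$, visibly the same form for every $I$; and for $I\neq J$ the wedges $\tilde\alpha_I\wedge\tilde\alpha_J$, $\tilde\beta_I\wedge\tilde\beta_J$, $\tilde\alpha_I\wedge\tilde\beta_J$ each contain a repeated $dx_\ell$, because intersection-completeness makes $I^c\cap J^c$, $I\cap J$ and $I^c\cap J$ nonempty, so they vanish; and $\tilde\alpha_I^2=\tilde\beta_I^2=0$ trivially. Off $U$ all these wedges vanish because the supports in question are pairwise disjoint there. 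Hence $\phi(1)=1$, $\phi(\alpha_I)=\tilde\alpha_I$, $\phi(\beta_I)=\tilde\beta_I$, $\phi(\nu)=\tilde\nu$ is a DGA homomorphism as in Theorem~\ref{tfae}(i), so $X_{\mathcal I}$ is scalable.

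The main obstacle is the geometric model: building the connected sum together with the $2r$ spheres having all the listed properties at once. The delicate point is reconciling ``$\hat a_I,\hat b_I$ are coordinate planes near $p_0$'' with ``$\hat a_I\cap\hat b_I=\{p_0\}$'', which is possible only because the caps can be routed around the extra topology of each summand (on $\#r(S^1\times S^1)$ this is just pushing an arc around a handle). This construction is the higher-dimensional, multi-summand form of the one underlying Theorem~\ref{scnm}; the genuinely new ingredient is combinatorial---the observation that intersection-completeness is exactly the condition needed to kill every cross-term in the displayed wedge products.
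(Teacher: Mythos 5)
Your reduction to Theorem~\ref{tfae}(i), the description of $H^*(X_{\mathcal I};\mathbb R)$, and the local formulas for $\tilde\alpha_I,\tilde\beta_I,\tilde\nu$ on the hub chart all match the paper's strategy, and your observation that intersection-completeness is exactly what kills every cross-term $\tilde\alpha_I\wedge\tilde\alpha_J$, $\tilde\beta_I\wedge\tilde\beta_J$, $\tilde\alpha_I\wedge\tilde\beta_J$ on $U$ is the same combinatorial point the paper makes. But the step you defer to the end --- producing the global geometric model --- is not a technicality you can wave at; it is the entire content of the paper's proof (Lemma~\ref{coordforms}, the pinch-off Lemma~\ref{pinch}, and \S\ref{pinchproof}), and as stated your requirements on the model are mutually inconsistent. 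If $\hat a_I$ and $\hat a_J$ are the coordinate planes spanned by $\{e_i: i\in I\}$ and $\{e_i:i\in J\}$ all the way out to $\partial U$, then their boundaries in $\partial U\cong S^k$ are the coordinate spheres $S(I)$ and $S(J)$, which meet along $S(I\cap J)$ --- nonempty precisely because $\mathcal I$ is intersection-complete. So the ``caps'' cannot be disjoint: the very condition that makes the cross-terms vanish algebraically at $p_0$ forces the representative spheres to intersect geometrically at the boundary of the hub. Consequently your claim that ``off $U$ all these wedges vanish because the supports in question are pairwise disjoint there'' fails in the annular region where the spheres must be separated, and it is exactly there that one must check that the Thom forms can be deformed while keeping every pairwise product identically zero.

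The paper handles this with its stratified ``membrane'' formalism: in $S^k\times[0,T]$ the thickened coordinate spheres are pinched off one maximal stratum at a time, each departure leaving residue near the lower strata which must be re-absorbed in a way that keeps the forms ``agreeing with the thickening'' (hence keeps all products zero), after which the lower strata are collapsed. Nothing in your proposal substitutes for this; the analogy with pushing an arc around a handle of $\#r(S^1\times S^1)$ does not capture the difficulty, which is not about finding room for the caps but about disentangling mutually intersecting coordinate spheres without ever creating a nonzero product of the dual forms. Until you supply an argument for this separation step, the proof is incomplete.
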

We use the notation $S_I$ and $S_{I^c}$ for the two spheres in the summand corresponding to $I$.

Theorem~\ref{scnm} is recovered from this statement by choosing as $\mathcal{I}$ any subcollection of the $\binom{n+m-1}{n-1}$ subsets of $\{0,\dots, n+m-1\}$ of cardinality $n$ that contain 0.  This, together with the inequality $n\leq m$, ensures that the family is intersection-complete.

\begin{proof}
  We start by explaining why the combinatorial formulation makes sense. To show that condition~(i) from Theorem~\ref{tfae} holds we need to present the cohomology ring $H^*(X_{\mathcal{I}})$ by forms $\omega \in \Omega^*_\flat(X_{\mathcal{I}})$.  The space $X_{\mathcal{I}}$ has a simple cell decomposition: it is a disk $D^{k+1}$ attached to a wedge of spheres~$\bigvee_I S_I$ by the sum of Whitehead products $[\id_{S_I},\id_{S_{I^c}}]$.
  
  So we start building these forms near the center of the disk $D^{k+1}\subset \mathbb{R}^{k+1}$ by sending the generator of $H^{|I|}(S_I)$ to the form $\omega_{I}:=\bigwedge_{i\in I} dx_i$, for each $I \in \mathcal{I} \cup \mathcal{I}^c$ (where $\mathcal{I}^c$ represents the set of all complements of elements of $\mathcal{I}$, not the complement of $\mathcal{I}$).  The fact that the family is intersection-complete then implies that any two such forms have a common $dx_i$ and hence multiply to $0$, unless they are $\omega_I \wedge \omega_{I^c}=\bigwedge_i dx_i$.  This way we get a multiplicative structure isomorphic to that of $H^*(X_{\mathcal{I}})$.
  
  Now it remains to extend the forms $\omega_I$ from a region $[-1,1]^{k+1} \varsubsetneq D^{k+1}$ to the rest of the disk so that on the boundary $\partial D^{k+1}$ they turn out to be pullbacks along the attaching map of the volume forms on the spheres $S_I$.  We summarize this in the following lemma, which we take the rest of the section to prove.
  \begin{lem} \label{coordforms}
    For any intersection-complete family $\mathcal{I}$ of subsets of $\{0,\ldots,k\}$, the forms $\omega_I$, $I\in \mathcal{I} \cup \mathcal{I}^c$, can be extended to closed forms on $D^{k+1}$ so that
    \[\omega_I|_{\partial D^{k+1}}=f^*\alpha_I,\]
    where the forms $\alpha_I$ are the volume forms of
    \[S_I \subset \bigvee_{I\in\mathcal I} (S_I \vee S_{I^c}) \subset X_{\mathcal I},\]
    and $f$ is the previously mentioned attaching map, and such that the product of the forms is zero outside $[-1,1]^{k+1}$.\qedhere
  \end{lem}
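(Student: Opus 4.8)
The construction is carried out in the collar $C := D^{k+1}\setminus\operatorname{int}[-1,1]^{k+1}$, which I identify with $S^k\times[0,1]$ so that $S^k\times\{0\}=\partial[-1,1]^{k+1}$ carries the boundary data $\mu_I$, the restriction of $\bigwedge_{i\in I}dx_i$ to $\partial[-1,1]^{k+1}$, while $S^k\times\{1\}=\partial D^{k+1}$ is where we must reach $f^*\alpha_I$. First I would fix a concrete model for the attaching map: realize $f=\sum_{I\in\mathcal I}[\iota_{S_I},\iota_{S_{I^c}}]$ by collapsing $S^k$ off a system of pairwise disjoint round disks $D_I$ ($I\in\mathcal I$) and using on each $D_I$ the standard model $\partial(D^{|I|}\times D^{k+1-|I|})\to S_I\vee S_{I^c}$ of the Whitehead attaching map. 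Choosing each $\alpha_I$ to be a Thom form concentrated near a regular value in $S_I$, one checks that $f^*\alpha_I$ is supported in $D_{[I]}$ --- where $[I]\in\mathcal I$ is whichever of $I,I^c$ lies in $\mathcal I$ --- and is a Thom form of a ``Whitehead core'' sphere $\Lambda_I\subset D_{[I]}$ of dimension $k-|I|$, with $(\Lambda_I,\Lambda_{I^c})$ a Hopf link inside $D_I$ for each $I\in\mathcal I$. On the inner end, by contrast, $\mu_I$ is Poincar\'e dual in $S^k$ to the coordinate sphere $C_I := \{x_j=0:j\in I\}\cap\partial[-1,1]^{k+1}$, of the same dimension $k-|I|$, with $(C_I,C_{I^c})$ again a Hopf link.

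The plan is to deform the $\mu_I$ across the collar to the $f^*\alpha_I$ one summand at a time. Partition $[0,1]$ into $|\mathcal I|$ consecutive sub-intervals, one for each $I\in\mathcal I$; on a sub-collar where a given form is not being deformed, take it to be pulled back from the appropriate end; and over the $I$-th slab deform only $\omega_I$ and $\omega_{I^c}$ from their coordinate shape to the Thom forms of $\Lambda_I$ and $\Lambda_{I^c}$. The vanishing of products is then arranged as follows. After its slab, each pair $\omega_I,\omega_{I^c}$ is supported in small disjoint tubular neighborhoods inside $D_I$, hence has vanishing product with every $\omega_J$ by disjointness of supports. The complementary products $\omega_I\wedge\omega_{I^c}$ vanish on all of $C$: at the start they equal, up to sign, the volume form restricted to the codimension-one slice $\partial[-1,1]^{k+1}$, hence $0$, and at the end the supports are disjoint. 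The non-complementary products $\omega_I\wedge\omega_J$ vanish throughout because intersection-completeness furnishes a common index of $I$ and $J$; one keeps a corresponding common $dx$ present in the deformed $\omega_I$ (and symmetrically in $\omega_J$) during its slab. Concretely this last requirement amounts to choosing the framings of the cores $\Lambda_I$ so that $f^*\alpha_I$ avoids the finitely many ``dangerous'' coordinate monomials $dx_S$ with $S\subseteq J^c$ for some $J$ not complementary to $I$; the coordinate form $\bigwedge_{i\in I}dx_i$ already avoids them, and intersection-completeness is exactly what makes these avoidance conditions simultaneously satisfiable. Each individual deformation is realized as the differential of an interpolation between primitives, so that all the forms stay closed in the flat category; the coisoperimetric inequality (Lemma \ref{coIP}) provides primitives of controlled size and support. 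Gluing to $\bigwedge_{i\in I}dx_i$ on $[-1,1]^{k+1}$ then yields the forms asserted by the lemma.

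The main obstacle is exactly this simultaneous deformation. Since $f^*\alpha_I$ and $\mu_I$ are only cohomologous on $S^k$ --- both representing trivial classes --- and are never ambient-isotopic in a way respecting the whole configuration (indeed $C_I\cap C_J\ne\varnothing$ for distinct $I,J\in\mathcal I$, while $\Lambda_I\cap\Lambda_J=\varnothing$), the interpolation cannot be an isotopy of submanifolds: it must pass through forms whose Poincar\'e-dual cycles genuinely cross one another. Guaranteeing that no product $\omega_I\wedge\omega_J$ ever becomes nonzero off the central cube during this process, while preserving the prescribed boundary values and closedness, and verifying that the framing conditions above can be met for all summands at once, is the delicate point to which the rest of the section is devoted.
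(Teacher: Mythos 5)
Your setup matches the paper's: extend by $\bigwedge_{i\in I}\chi_{[-1,1]}(x_i)\,dx_i$ so that on a large sphere each $\omega_I$ is a Thom form of a coordinate sphere, and then the task is to deform this configuration across an annulus $S^k\times[0,T]$ into Thom forms of small Whitehead-linked spheres realizing $f^*\alpha_I$. You also correctly identify the central difficulty: the coordinate spheres dual to distinct non-complementary $\omega_I$ intersect each other, the target spheres do not, so the interpolation cannot be an ambient isotopy and the dual cycles must somehow pass through one another while every product $\omega_I\wedge\omega_J$ stays identically zero. But your proposal stops exactly there --- you name this as ``the delicate point to which the rest of the section is devoted'' without supplying the mechanism. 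The two devices you do offer are not adequate substitutes: (1) processing ``one summand at a time'' over slabs does not help, because during the $I$-th slab the deformed $\omega_I$ must still have vanishing product with every not-yet-processed $\omega_J$, whose support is a neighborhood of an entire coordinate sphere and cannot be assumed disjoint from the region swept out; and (2) the claim that one can ``keep a common $dx$ present'' in the deformed form is unsubstantiated --- once $\omega_I$ is deformed toward a Thom form of a small sphere $\Lambda_I$ in general position, it is no longer divisible by any fixed coordinate one-form, and the ``framing/avoidance'' conditions you invoke are never shown to be satisfiable, let alone simultaneously for all pairs.

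The paper's actual proof supplies precisely the missing mechanism. It introduces a thickened stratification of $S^k$ by all the coordinate spheres $S(J)$ for $J$ in the intersection-closure of $\mathcal I\cup\mathcal I^c$ (``membranes'' $\s_J$ with nested widths $\epsi_J$), and a normal form for the data: on each open membrane the forms are pullbacks from the normal disk fiber. Vanishing of products is then a fiberwise statement that is preserved as long as this ``agreement'' is maintained. The pinch-off lemma (Lemma \ref{pinch}) detaches a maximal membrane by reparametrizing only its normal coordinate via the function $\tau$, which splits off a parallel sphere $\s_I'$ while leaving residue that is absorbed into the thickened lower membranes, still in product form. The maximal membranes (supports of the $\omega_I$) are pinched off first and contracted to a Whitehead-linked configuration; the lower strata, which carry only residual fiberwise data and no cohomology, are then pinched off and coned away one by one from the top down. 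None of this inductive peeling structure, nor the invariant (``agreement with the thickening'') that makes the product-vanishing persist, appears in your proposal, so the proof as written has a genuine gap at its core step.
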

\end{proof}

\subsection{Proof of lemma~\ref{coordforms}}
Let's overview the rough idea of the construction.  First we extend the forms $\omega_I$ from the cube $[-1,1]^{k+1}$ to a much larger cube via the same formula
\[\omega_I=\bigwedge_{i \in I} \chi_{x_i\in[-1,1]}dx_i.\]
In other words, on any large sphere around the origin, $\omega_{I^c}$ is concentrated near, and Poincar\'e dual to, the coordinate sphere
\[S(I):=\{\mathbf{x}\in S^{k} \mid x_i=0, i\in I^c\}.\]
Taking $\overline{\mathcal{I}}$ to be the closure of $\mathcal{I} \cup \mathcal{I}^c$ under intersections, the coordinate spheres $S(J)$ for $J \in \overline{\mathcal{I}} \setminus \emptyset$ are the closed strata of a stratification of $S^k$.  (By a stratification here, we mean a collection of submanifolds $X(I) \subset X$ associated to elements $I$ of a semilattice $\mathcal I$, where $\overline{X(I)} \cap \overline{X(J)}=\overline{X(I \cap J)}$, and with conditions on the neighborhood of a point of $X(I)$.  We don't need to make the concept completely precise because the general notion of such a stratification is only used to build intuition.)

Outside this cube, we will construct forms that are again more or less Poincar\'e dual to closed submanifolds in a stratification of an annular region in $D^{k+1}$, which we equate to $S^k \times [0,T]$.  This stratification restricts to the stratification by coordinate spheres on $S^k \times \{0\}$, and the strata indexed by all $J$ outside of $\mathcal{I} \cup \mathcal{I}^c$ have trivial intersection with $S^k \times \{T\}$.

We describe this as a kind of stratified framed bordism, that is we examine the intersections of the strata with concentric spheres centered at the origin and describe their evolution as ``time'', i.e.~radius, increases.  Over time, the strata are ``peeled off'' one by one, starting with the maximal ones.  These maximal strata are stored aside after being detached, while all subsequent lower ones are peeled off and then collapsed. 

Each time a stratum departs, however, it leaves behind a small part of itself, concentated near and held in place by lower strata.  We reinterpret the leftover pieces as data associated to fibers over the lower strata: here we actually keep track of the forms rather than the strata themselves. Luckily, the exact shapes that are added this way don't matter, as all the lower-dimensional strata eventually collapse.  But we do use the fact that they are globally almost products, in a sense which we now describe.

\begin{defn}
  A \emph{thickening} of the stratification by coordinate spheres described above is determined by a choice of numbers $1<\epsi_I\ll \text{Rad}(S^k)$ for every $I \in \overline{\mathcal I}$ which satisfy $\epsi_J \gg \epsi_I$ whenever $J \subset I$.  Then the \emph{(closed) membrane} $\s_I$ is defined to be the $\epsi_I$-neighborhood of the coordinate sphere $S(I)$.  The \emph{open membrane} $\s^{\circ}_I$ is $\s_I\setminus \bigcup\limits_{J \varsubsetneq I}\s_J$.
\end{defn}
To start, we must pick the initial $\epsi_I$'s small enough that we can pass to significantly thicker membranes a number of times over the course of the argument.

The membrane $\s_I$ is canonically diffeomorphic to $S(I) \times D^{k+1-|I|}$, with coordinates $(x,r,\theta)$ representing the point at distance $r$ along the geodesic from $x \in S(I)$ to $\theta \in S(I^c)$.  We say that a form $\omega$ \emph{agrees with} our thickening if on any open membrane
\[\s^{\circ}_{I}\cong S(I) \times D(\epsi_I) \setminus \bigcup_{J \varsubsetneq I}\s_J\]
it depends only on the $D(\epsi_I)$ coordinates, i.e., $\omega$ is the pullback of some $\omega_D\in \Omega^*(D(\epsi_I))$ under the projection to the second factor.

This notion of agreement is crucial for the description of the construction, so it will be maintained throughout the rest of the section.

As the procedure consists of peeling off the membranes, we need to specify a way to detach them.

\begin{defn}
  Given a thickening and closed forms $\omega_{J,0}$ which agree with it, a \emph{pinching off} of a membrane $\s_I$ in the direction of $p_I\in S(I^c)$ is a new thickening (with new forms $\omega_{J,1}$) such that:
  \begin{enumerate}
  \item All the change in the forms is supported in a small neighborhood of $\s_I$, and outside all $\s_J$ for $J \neq I$.
  \item The membrane $\s_I$ is replaced by a parallel thickened sphere $\s'_I$ which is shifted slightly in the direction of $p_I$ and doesn't cross any other membranes.  This carries forms that agree with its product structure.  The new thickening does not have a membrane corresponding to $I$.
  \item For $J \subset I$, the $\s_J$ are thickened in a consistent way, and the forms are changed in such a way that they agree with the new thickening; for other $J$, the thickening does not change.
  \item The forms $\omega_{J,0}$ and $\omega_{J,1}$ extend to closed forms $\omega_{J,t} \in \Omega^*(S^k \times [0,1])$ whose pairwise products are still zero.
\end{enumerate}
\end{defn}

\begin{figure}
  \includegraphics[width=\textwidth]{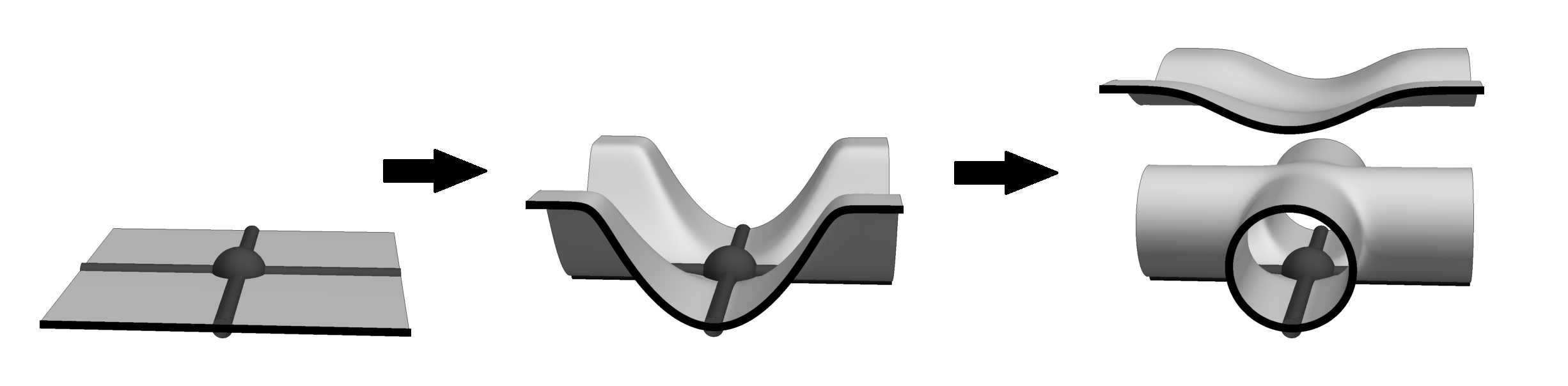}

  \caption{An example of pinching off a 2-membrane (gray) with 1- and 0-membranes (black) standing still.  The 2-membrane leaves behind some tubes that are inherited by the lower membranes and are incorporated into them.}  \label{tubes}
\end{figure}

\begin{lem} \label{pinch}
  Let $\mathcal J$ be a set of subsets of $[0,\ldots,k]$ which is closed under intersection.  For a sufficiently thin thickening of the stratification of $S^k$ by coordinate spheres $S(J)$, $J \in \mathcal J$, any maximal membrane $\s_I$ can be pinched off in any direction $p_I$ such that the shift of $\s_I$ in that direction wouldn't intersect other membranes.
\end{lem}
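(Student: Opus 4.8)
The plan is to realize the pinching as an explicit ambient deformation of $S^k$ supported in a thin neighborhood of $\s_I$, together with a local ``repackaging'' of each lower membrane $\s_J$, $J\subsetneq I$, which absorbs the debris left behind. \emph{Setup.} Since $I$ is maximal and $\mathcal J$ is closed under intersection, every membrane $\s_K$ meeting $\s_I$ either has $K\subsetneq I$, or meets $\s_I$ only inside a membrane $\s_{I\cap K}$ with $I\cap K\subsetneq I$; hence the \emph{free part} $\s_I^{\circ}=\s_I\setminus\bigcup_{J\subsetneq I}\s_J$ is disjoint from every membrane other than $\s_I$ itself, using the scale separation $\epsi_J\gg\epsi_I$. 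Topologically $\s_I^{\circ}$ is the tube $S(I)\times D^{k+1-|I|}$ with a ``hole'' drilled near each $S(J)$, while the complementary collars $\s_I\cap\s_J$ sit deep inside the fat membranes $\s_J$. By hypothesis we may choose a distance $\delta$ so that the membrane obtained by sliding $\s_I$ a distance $\delta$ toward $p_I$ is disjoint from all other membranes.

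\emph{Lifting and capping.} First I would construct an ambient diffeomorphism $\Phi_t$ of $S^k$, supported in a neighborhood of $\s_I^{\circ}$ that misses every membrane except $\s_I$, which slides $\s_I^{\circ}$ a distance $\delta$ toward $p_I$ and is the identity outside that neighborhood (in particular on all the $\s_J$). Over each hole the lifted free part has boundary a sphere bundle over $S(J)$; since the region now occupied by the lift is, by the choice of $\delta$, free of all other membranes, I can glue in a cap $S(J)\times D^{|I|-|J|}$ carrying its product structure, so that the union of the lifted free part with the caps is a new membrane $\s'_I\cong S(I)\times D^{k+1-|I|}$ disjoint from everything else --- this is condition (2). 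The form $\omega_{I,t}$ is carried along by $\Phi_t$ over the free part and extended over each cap as $\bigwedge_{i\in I}dx_i$ in the cap coordinates; this extension is closed, and because the caps meet no other membrane its product with every other form is zero for the same index-overlap reason as in the original construction.

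\emph{Absorbing the residual tubes.} Near each $S(J)$, $J\subsetneq I$, we are left with $\s_J$ together with the adjacent collar $\s_I\cap\s_J$, now unattached to anything of index $I$. I would absorb these by enlarging $\epsi_J$ slightly to some $\epsi'_J$ still obeying all the scale inequalities, so that the new $\s_J$ contains the collar, and then adjusting the forms in a neighborhood of $\s_J$ (by a further deformation supported there) until on the enlarged open membrane they again depend only on the disk coordinates --- i.e.\ they \emph{agree with} the new thickening, as condition (3) demands. These adjustments are carried out for the various $J\subsetneq I$ in an order compatible with nesting, so that they are mutually independent; the scale separation is precisely what gives the room. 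The interpolating forms $\omega_{J,t}$ are obtained by concatenating the transport under $\Phi_t$ with these local adjustments; each remains closed (a pushforward of a closed form is closed, and each local adjustment can be performed through closed forms), and any two of them keep vanishing product, since at every time and every point where the supports of the forms for two distinct members of $\mathcal J\cup\mathcal J^c$ overlap they still carry a common $dx_i$ by intersection-completeness. Together with the support statement this yields conditions (1) and (4). Finally one checks that, for a sufficiently thin thickening, everything fits: the geodesic normal coordinates used on the membranes and collars are valid, the caps fit in the space cleared by the shift, and the enlarged $\s'_J$ still avoid every incomparable membrane.

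The main obstacle is this last step. The lifting and capping is a routine bump-function construction, but absorbing the residual tubes into the lower membranes while \emph{simultaneously} keeping all forms closed, keeping all pairwise products zero, and restoring the ``agrees with the thickening'' normal form is what forces the careful, scale-separated bookkeeping --- and it is the reason the statement asks only for a \emph{sufficiently thin} thickening rather than an arbitrary one.
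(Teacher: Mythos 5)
Your description of the \emph{end state} — a parallel shifted copy of $\s_I$ plus residual debris that gets absorbed into the thickened lower membranes — is exactly right, but the mechanism you propose for getting there has a genuine gap. You transport the form on $\s_I$ by an ambient isotopy $\Phi_t$ that is the identity on every $\s_J$, $J\subsetneq I$. Then $\Phi_1$ maps the connected support $\s_I$ to a connected set: it is still anchored inside the lower membranes (where $\Phi_1=\id$) and is merely bulged outward in between, so nothing has pinched off. A diffeomorphism cannot disconnect the support of a form. If instead you literally translate only the free part $\s_I^\circ$ and plan to ``cap'' its boundary afterwards, then at intermediate times the transported form is Poincar\'e dual to a manifold with nonempty boundary and is therefore not closed — but condition (4) requires the interpolating $\omega_{J,t}$ to be \emph{closed} forms on $S^k\times[0,1]$ with vanishing pairwise products throughout, not just at $t=0$ and $t=1$. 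Relatedly, once you start applying different local corrections to different forms, the claim that products stay zero ``because they still carry a common $dx_i$'' no longer applies: after pushforwards and ad hoc adjustments the forms are not coordinate monomials anymore, and nothing forces two separately modified forms to keep multiplying to zero.

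The paper avoids all of this by never moving anything ambiently. On a product chart $K\cong S(I)\times D^{k-|I|}\times[-\epsi,\delta+\epsi]$ it redefines \emph{all} the forms simultaneously as $\omega_J=(\pi\times\tau)^*\alpha_J$, where the $\alpha_J$ are the fixed compactly supported forms on the disk factor and $\tau$ is a single piecewise-linear Lipschitz function of the last coordinate $r$, the distance $d$ from the $S(I)$-coordinate to $\bigcup_{J'\subsetneq I}S(J')$, and the time $t$. Because every form is pulled back under the same Lipschitz map, closedness and the vanishing of all pairwise products are automatic at every time; the pinching happens because $\tau(\cdot,1)$ is non-injective in $r$, so the preimage of a neighborhood of $0$ splits into the shifted sphere $\s_I'$ and a bubble hugging the lower strata, which is then swallowed by enlarging the $\epsi_J$. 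Your argument would need to be rebuilt around some such simultaneous, non-injective reparametrization (or an equivalent device) before the closedness and product conditions can be checked rather than asserted.
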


This is the main technical lemma, but its proof is just a wordy description of Figure \ref{tubes}. So we put it in a separate section~\ref{pinchproof}. With this tool at hand, we are ready to prove Lemma~\ref{coordforms}.

The forms on the cylinder $S^k\times[0,T]$ are constructed on cylinders $S^k\times[t,t+1]$, one after the other, half of which are pinch-off cylinders.

First we pinch off the maximal membranes. Since $\mathcal{I}$ is intersection-complete, no two sets of indices in $\mathcal I \cup \mathcal I^c$ are contained in one another. Therefore the supports of the forms $\omega_I$ are precisely the maximal membranes.  Given a maximal membrane $\s_I$, pick a point $p_I \in S(I^c)$ that is far away from any lower membranes.  That ensures that the geodesic disk $D$ with center $p_I$ and boundary $S(I)$ only intersects the open membranes $\s_{J\subseteq I}^\circ$ and $\s_{I^c}^\circ$.  So we pinch off $\s_I$ in the direction of $p_I$ and contract it along $D$ to be a tight loop around $p_I$.  Then we pinch off $\s_{I^c}$ in a similar way and contract it along an analogous disk to be tightly linked with the new $\s_I$.

After every maximal membrane is dealt with in this way, we have a set of Whitehead-linked spheres, as required by the conclusion of the lemma.  We move them all to a small ball so they can be ignored for the rest of the construction; it remains to kill the remaining membranes.

This is done inductively, from the top down.  Any now-maximal membrane $\s_I$ can be pinched off in the direction of the $p_J$ that was picked earlier for one of the original maximal membranes $\s_J$, $J \supset I$.  The geodesic disk $D$ with center $p_J$ and boundary $S(I)$ only intersects membranes near its boundary, since we have already gotten rid of $\s_{J^c}^\circ$.  Thus after pinching off, we can extend the resulting sphere to a disk in $S^k \times [t,t+1]$ which does not intersect any other membranes.  The normal bundle extends to the trivial bundle on this disk, so we can extend the forms to
ones on $S^k \times [t,t+1]$ which agree with this bundle structure.  On the
remaining thickened stratification, the forms do not change on this interval.

After collapsing all the lower-order membranes, all that remains is the linked
spheres, as required by the statement of the lemma. \qedhere

\subsection{Pinch-off lemma}

\label{pinchproof}

It remains to prove Lemma~\ref{pinch}.  Recall that the aim is to pinch off a maximal membrane $\s_I$ so that it splits into a parallel disjoint sphere $\s_I'$, plus some leftovers that are brushed under the set $\bigcup_{J \subsetneq I} \s_J$. First, observe that on a neighborhood $K$ of $\s_I$ (which includes $\s_I'$) we can choose coordinates
\begin{equation} \label{eq:prod}
  S(I) \times D^{k-|I|} \times [-\epsi, \delta+\epsi],
\end{equation}
which preserve trivializations, such that $\s_I \subset S(I) \times D^{k-|I|} \times [-\epsi,\epsi]$ and such that the last coordinate represents roughly the direction from $S(I)$ to $p_I$, which is angled away from other membranes intersecting $S(I)$.  In particular, we can arrange it so that $\s_I'$ is just $\s_I$ shifted by $\delta$ in the direction of the last coordinate, and $S(I) \times D^{k-|I|} \times [\epsi, \delta+\epsi]$ does not intersect any membranes.

We now define forms on $K \times [0,1]$ which extend the $\omega_J$ on $K \times \{0\}$ and are time-invariant on the $\s_{J'}$, $J' \subsetneq I$.  Recall that on $\s_I^\circ$, the $\omega_J$ are independent of the sphere coordinate, that is $\omega_J|_{\s_I^\circ}$ is the pullback of a compactly supported form $\alpha_J \in \Omega^*(D^{k+1-|I|})$ along the projection to the disk coordinate.  Let $K^\circ=K \setminus \bigcup_{J' \subsetneq I} \s_{J'}$.  We define $\omega_J$ on $K^\circ \times [0,1]$ via $\omega_J=(\pi \times \tau)^*\alpha_J$, where $\pi(x,y,r,t)=y$ is the projection to $D^{k-|I|}$ and
\[\tau:S(I) \times D^{k-|I|} \times [-\epsi,\delta+\epsi] \times [0,1] \to [-\epsi,\delta+\epsi]\]
is a Lipschitz function satisfying:
\begin{enumerate}[(a)]
\item $\tau(x,y,r,0)=r$ for all $x$ and $y$ (so that $\omega_J|_{K \times \{0\}}$ is as desired).
\item For $t \in (0,1)$, $\tau(x,y,r,t)$ interpolates linearly between $r$ and $\tau(x,y,r,1)$.
\item $\tau(x,y,r,1)$ depends only on $r$ and the distance $d$ from $x$ to the set $\bigcup_{J' \subsetneq I} S(J')$.  Moreover, as a function of $r$ and $d$ it is the piecewise linear function described by Figure \ref{fig:tau}.
\end{enumerate}
\begin{figure}
  \begin{tikzpicture}%
	[z={(0cm, 1cm)},
	x={(-0.8cm, -0.3cm)},
	y={(-0.7cm, 0.35cm)},
	scale=0.8]
  	
  	\filldraw[very thick,fill=gray!50!white] (0,-0.5,-0.5) -- (0,0.5,0.5) -- (0.6,0.5,0.5) -- (0.6,-0.5,-0.5) -- cycle;
  	\draw[-stealth] (-1,0,0.3) node[anchor=west,align=center] {$\bigcup_{J' \subsetneq I} \s_{J'}$ fits in here\\(and $\tau$ is independent of $t$\\on this subdomain)} -- (-0.1,0,0);
	
	\foreach \x in {3,3.2,...,4} {
	  \draw (\x,-1,-1)--(\x,5,-1)--(\x,7,1);
	}
	\foreach \x in {0.2,0.4,...,1.8} {
	  \draw (3-\x,-1,-1) -- (3-\x,1-\x,-1) -- (3-\x,1,-1+\x) -- (3-\x,1+\x,-1) -- (3-\x,5,-1) -- (3-\x,7,1);
	}
	\foreach \x in {0,0.2,...,1} {
	  \draw (\x,-1,-1)--(\x,1,1)--(\x,3,-1)--(\x,5,-1)--(\x,7,1);
	}
	\foreach \y in {-1,3,4,5} {
  	  \draw (0,\y,-1) -- (4,\y,-1);
  	}
	\draw (0,0,0) -- (1,0,0) -- (2,0,-1) -- (4,0,-1);
	\draw (0,1,1) -- (1,1,1) -- (3,1,-1) -- (4,1,-1);
	\draw (0,2,0) -- (1,2,0) -- (2,2,-1) -- (4,2,-1);
  	\draw (0,6,0) -- (4,6,0);
  	\draw (0,7,1) -- (4,7,1);
  	
  	\node[anchor=north west] at (3,-1,-1) {$d=\epsi$};
  	\node[anchor=north west] at (0,-1,-1) {$d=0$};
  	\node[anchor=north east] at (4,-1,-1) {$r=-\epsi$};
  	\node[anchor=north east] at (4,1,-1) {$\epsi$};
  	\node[anchor=north east] at (4,3,-1) {$3\epsi$};
  	\node[anchor=north east] at (4,5,-1) {$\delta-\epsi$};
  	\node[anchor=north east] at (4,7,1) {$r=\delta+\epsi$};
  \end{tikzpicture}
  \caption{A graph of $\tau(x,y,r,1)$ as a function of $r$ and $d$.  It is not to scale, but faithfully shows the linear pieces.  The range is $[-\epsi,\epsi]$.} \label{fig:tau}
\end{figure}
The membrane pinched off in Figure \ref{tubes} is the preimage under $\tau$ of a small neighborhood of zero.  In Figure \ref{fig:tau}, we see that at time $t=1$, this preimage splits into a shifted copy $\s_I'$ and a bubble surrounding the lower strata.  We expand all the $\s_J$, $J \subsetneq I$, so that they have radius between $3\epsi$ and $\delta-\epsi$ and therefore encompass this bubble.  This can be done as long as $\delta$ is sufficiently larger than $\epsi$.

We argue that the new forms at time 1 agree with this new thickening.  Indeed, if $(x,y,r)$ is contained in the newly thickened $\s_J^\circ$, then the closest point to $x \in S(I)$ in the lower strata is in some $S(J')$ such that $J' \supseteq J$.  Then the values of the forms at $(x,y,r,1)$ only depend on the distance from $x$ to this point and on $y$ and $r$.  All of these depend only on the fiber coordinate in $\s_J$.  On the other hand, outside all of the $\s_J$, the forms are zero except on $\s_I'$.

Finally, on $\s_I'$, $\tau$ is independent of $x$, and therefore $\omega_I$ agrees with the product structure \eqref{eq:prod}. \qedhere

\section{Rational homotopy theory} \label{S2}

In this section we introduce Sullivan's formulation of rational homotopy theory using differential forms, emphasizing the quantitative aspects outlined in \cite{PCDF}.  We also explain why these results apply to flat as well as smooth forms.  We recommend the book by Griffiths and Morgan \cite{GrMo} for a more thorough introduction to the algebraic aspects.

The basic category of Sullivan's theory is that of differential graded algebras (DGAs).  A DGA is a chain complex over a field $\mathbb{F}$ (in our case, always $\mathbb{Q}$ or $\mathbb{R}$) equipped with a graded-commutative multiplication satisfying the (graded) Leibniz rule.  The prototypical examples are:
\begin{itemize}
\item The smooth forms $\Omega^*(X)$ on a smooth manifold $X$, or the simplexwise smooth forms on a simplicial complex.
\item Sullivan's \emph{minimal DGA} $\mathcal{M}_Y^*(\mathbb{F})$ for a simply connected space $Y$, which is a free algebra generated in degree $n$ by the \emph{indecomposable} elements $V_n=\Hom(\pi_n(Y);\mathbb{F})$ and with a differential determined by the $k$-invariants in the Postnikov tower of $Y$.  We will write $\mathcal{M}_Y^*$ to mean $\mathcal{M}_Y^*(\mathbb{R})$.
\end{itemize}
The cohomology of a DGA is the cohomology of the underlying chain complex.  The correct notion of an equivalence between DGAs is a \emph{quasi-isomorphism}, a map which induces an isomorphism on cohomology.  In particular, for every simply connected manifold or simplicial complex $Y$ there is a quasi-isomorphism, which we call the \emph{minimal model},
\[m_Y:\mathcal{M}_Y^* \to \Omega^*(Y),\]
constructed by induction on the indecomposable elements of $\mathcal{M}_Y^*$.

When $Y$ is compact, $\Omega^*(Y)$ is finite-dimensional and $\mathcal{M}_Y^*$ is finitely generated in every degree; so a reductionist perspective is that $m_Y$ is simply a choice of a finite number of forms on $Y$ satisfying certain relations.  Nevertheless, the perspective of shifting between maps $f:X \to Y$ and homomorphisms $\ph:\mathcal{M}_Y^* \to \Omega^*(X)$ via the correspondence
\[f \mapsto f^*m_Y\]
turns out to be quite powerful.  We think of homomorphisms $\mathcal{M}_Y^* \to \Omega^*(X)$, most of which are not images of genuine maps under this correspondence, as ``formal'' maps from $X$ to $Y$.

\subsection{Flat forms and minimal models}
Here we demonstrate the advantages of using flat forms $\Omega^*_\flat (X)$ rather than smooth forms to define our minimal models.  First, Lemmas \ref{flatQI} and \ref{flatLip} imply the following:
\begin{itemize}
\item Any minimal model for $\Omega^*(X)$ is also a minimal model for $\Omega_\flat^*(X)$.
\item Any minimal model $m_Y:\mathcal{M}_Y^* \to \Omega^*_\flat(Y)$ induces an algebraicization map $f \mapsto f^*m_Y$ sending
  \[\{\text{Lipschitz maps }X \to Y\} \to \Hom(\mathcal{M}_Y^*,\Omega^*_\flat(X)).\]
\end{itemize}

Given finite complexes $X$ and $Y$, we define a weak$^\flat$ topology on $\Hom(\mathcal{M}_Y^*,\Omega_\flat^*(X))$ generated by the topologies on the restrictions to each indecomposable.  In other words, a sequence of maps converges if and only if it converges on every indecomposable.
\begin{lem}
  A sequence of maps in $\Hom(\mathcal{M}_Y^*,\Omega_\flat^*(X))$ whose $L^\infty$ norm on each indecomposable is bounded has a weak$^\flat$-convergent subsequence.
\end{lem}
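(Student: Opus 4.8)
The plan is to reduce the statement to a compactness property of the space of flat forms of bounded sup-norm, and then to extract a convergent subsequence one indecomposable at a time. Since $Y$ is a finite complex, $\mathcal{M}_Y^*$ is finitely generated in each degree, and since $Y$ is simply connected with $\dim Y<\infty$ one may work with the finitely many indecomposables of degree $\leq \dim X$ (the others contribute nothing after applying a map into $\Omega_\flat^*(X)$, or can be sent to $0$, so they do not affect convergence). Thus a map in $\Hom(\mathcal{M}_Y^*,\Omega_\flat^*(X))$ is determined by a finite tuple of flat forms on $X$, one per indecomposable generator $v$, subject to the constraint that $d$ of the form on $v$ equals the (polynomial) image under the map of $dv\in\mathcal{M}_Y^*$, which is a wedge-polynomial in the forms assigned to lower generators.

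First I would set up the induction on the filtration of $\mathcal{M}_Y^*$ by degree (equivalently, by the Postnikov/minimal-model tower): list the indecomposables $v_1,v_2,\dots,v_m$ in order of nondecreasing degree, so that $dv_j$ lies in the subalgebra generated by $v_1,\dots,v_{j-1}$. Given a sequence $(\ph_N)$ with $\lVert\ph_N(v_j)\rVert_\infty\leq B_j$ for all $N$ and $j$, I would pass to successive subsequences: having arranged that $\ph_{N}(v_i)$ weak$^\flat$-converges for $i<j$, I want a further subsequence along which $\ph_N(v_j)$ weak$^\flat$-converges. This is exactly the Banach--Alaoglu statement for flat forms: a bounded sequence in $\Omega_\flat^k(X)$, viewed as a bounded sequence in the dual normed space of flat $k$-chains (the first definition of $\Omega_\flat^*$ recalled in Section~\ref{S:flat}), has a weak$^*$-convergent subsequence, because the space of flat chains on the finite complex $X$ is separable, so bounded balls in its dual are weak$^*$-sequentially compact. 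After $m$ such refinements I obtain a single subsequence along which every $\ph_N(v_j)$ weak$^\flat$-converges, to some flat form $\psi_j$.

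It then remains to check that the assignment $v_j\mapsto\psi_j$ actually defines a DGA homomorphism $\mathcal{M}_Y^*\to\Omega_\flat^*(X)$, i.e.\ that it is compatible with the differential; multiplicativity is automatic once one extends to the free algebra. For this I would invoke Lemma~\ref{lem:dga}: weak$^\flat$ limits commute with $d$ and $\wedge$, so $d\psi_j=\lim d\ph_N(v_j)=\lim \ph_N(dv_j)$, and since $dv_j$ is a fixed wedge-polynomial in $v_1,\dots,v_{j-1}$, the right-hand side is the same wedge-polynomial in $\psi_1,\dots,\psi_{j-1}$, which is precisely the value of the limit homomorphism on $dv_j$. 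Hence the limit is a genuine element of $\Hom(\mathcal{M}_Y^*,\Omega_\flat^*(X))$ and, by construction, it is the weak$^\flat$ limit of the chosen subsequence. The one genuinely technical point — the main obstacle — is the sequential weak$^*$ compactness of bounded sets of flat forms: one must know that flat chains on a finite complex form a separable normed space (or otherwise argue compactness directly), so that the diagonal/subsequence extraction is legitimate; everything else is a formal consequence of Lemma~\ref{lem:dga} and the structure of the minimal model.
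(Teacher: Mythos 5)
Your proposal is correct and follows essentially the same route as the paper: restrict to the finitely many indecomposables of degree $\leq \dim X$, apply Banach--Alaoglu to each (noting the bound on differentials comes from lower-degree indecomposables), pass to a common subsequence, and invoke Lemma \ref{lem:dga} to see the weak$^\flat$ limit is again a DGA homomorphism. Your explicit remark about separability of the space of flat chains, needed for \emph{sequential} weak$^*$ compactness, is a point the paper leaves implicit but is correctly identified.
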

\begin{proof}
  We note that this also bounds the flat norm on each indecomposable, since the differential is generated by indecomposables in lower degrees.  By the Banach--Alaoglu theorem, the restriction of the sequence to every indecomposable has a weak$^\flat$-convergent subsequence.  Since we can choose a finite basis of indecomposables of degree $\leq \dim X$, this gives us a subsequence which weak$^\flat$-converges on all indecomposables.  By Lemma \ref{lem:dga}, this subsequence in fact converges to a DGA homomorphism.
\end{proof}

Together with Lemma \ref{coIP}, these observations are enough to show that the machinery of \cite{PCDF} still works when we substitute flat forms for smooth ones.

\subsection{Obstruction theory}

Classical obstruction theory describes the obstruction to constructing a lifting-extension
\[\xymatrix{
  A \ar[r] \ar@{^(->}[d] & Y \ar[d]^p \\
  X \ar[r] \ar@{-->}[ru] & B,
}\]
where $p:Y \to B$ is a principal fibration with fiber $K(\pi,n)$: this obstruction lies in $H^{n+1}(X,A;\pi)$ and is obtained by pulling back the $k$-invariant of the fibration, which lies in $H^{n+1}(B;\pi)$.  In particular, it is often fruitful to take $Y$ and $B$ to be adjacent stages of the Postnikov tower of a space.

A similar obstruction theory for minimal DGAs is described in \cite[\S10]{GrMo}.  In this case, the role of a Postnikov stage of $\mathcal M_Y^*$ is played by the sub-DGA $\mathcal{M}_Y^*(k)$ generated by indecomposables of degree $\leq k$.  More generally, the obstruction theory can be stated for \emph{elementary extensions} $\mathcal A \to \mathcal A\langle V \rangle$, where $V$ is a vector space of indecomposables in degree $k$ with differentials in $\mathcal A^{n+1}$.

To give the precise statements, we must introduce other ideas.  First define homotopy of DGA homomorphisms as follows: $f,g:\mathcal{A} \to \mathcal{B}$ are homotopic if there is a homomorphism
\[H:\mathcal{A} \to \mathcal{B}\otimes\mathbb{R}\langle t,dt \rangle,\]
where $t$ is of degree zero, such that $H|_{\substack{t=0\\dt=0}}=f$ and $H|_{\substack{t=1\\dt=0}}=g$.  We think of $\mathbb{R}\langle t,dt \rangle$ as an algebraic model for the unit interval and this notion as an abstraction of the map induced by an ordinary smooth homotopy.  In particular, it defines an equivalence relation \cite[Cor.~10.7]{GrMo}.  Moreover, for any piecewise smooth space $X$ there is a map
\[\rho:\Omega^*_\flat(X) \otimes \mathbb{R}\langle t,dt \rangle \to
\Omega_\flat^*(X \times [0,1])\]
given by ``realizing'' this interval, that is, interpreting the $t$ and $dt$ the way one would as forms on the interval.  When the target is a space $\Omega^*_\flat(X)$ of flat forms, we can obtain the same homotopy theory by replacing DGA homotopies with ``formal'' homotopies with images in $\Omega^*_\flat(X \times [0,T])$.  We often use these two notions of homotopy interchangeably in this paper.

To help define concrete DGA homotopies, we introduce some extra notation.  For any DGA $\mathcal A$, define an operator $\int_0^t:\mathcal A \otimes \mathbb{R}\langle t,dt \rangle \to \mathcal A \otimes \mathbb{R}\langle t,dt \rangle$ by
\[{\textstyle\int_0^t a \otimes t^i}=0, {\textstyle\int_0^t a \otimes t^idt}=(-1)^{\deg a}a \otimes \frac{t^{i+1}}{i+1}\]
and an operator $\int_0^1:\mathcal A \otimes \mathbb{R}\langle t,dt \rangle \to \mathcal A$ by
\[{\textstyle\int_0^1 a \otimes t^i}=0,{\textstyle\int_0^1 a \otimes t^idt}=
(-1)^{\deg a}\frac{a}{i+1}.\]
These provide a formal analogue of fiberwise integration; in particular, they satisfy the identities
\begin{align}
  d\bigl({\textstyle\int_0^t} u\bigr)+{\textstyle\int_0^t} du &=
  u-u|_{\substack{t=0\\dt=0}} \otimes 1 \label{eqn:I0t} \\
  d\bigl({\textstyle\int_0^1} u\bigr)+{\textstyle\int_0^1} du &=
  u|_{\substack{t=1\\dt=0}}-u|_{\substack{t=0\\dt=0}}. \label{eqn:I01}
\end{align}

The relative cohomology of a DGA homomorphism $\ph:\mathcal{A} \to \mathcal{B}$ is the cohomology of the cochain complex
\[C^n(\ph)=\mathcal{A}^n \oplus \mathcal{B}^{n-1}\]
with the differential given by $d(a,b)=(da,\ph(a)-db)$.  This cohomology fits, as expected, into an exact sequence involving $H^*(\mathcal{A})$ and $H^*(\mathcal{B})$.  Given a coefficient vector space $V$, $H^*(C^*,V)$ is the cohomology of the cochain complex $\Hom(V,C^*)$.

Now we state the main lemma of obstruction theory, which states the conditions under which a map can be extended over an elementary extension.
\begin{prop}[10.4 and 10.5 in \cite{GrMo}] \label{extExist}
  Let $\mathcal{A}\langle V \rangle$ be an $n$-dimensional elementary extension of a DGA $\mathcal{A}$.  Suppose we have a diagram of DGAs
  \[\xymatrix{
    \mathcal{A} \ar[r]^f \ar@{^{(}->}[d] & \mathcal{B} \ar[d]^h \\
    \mathcal{A}\langle V \rangle \ar[r]^g & \mathcal{C}
  }\]
  with $g|_{\mathcal{A}} \simeq hf$ by a homotopy $H:\mathcal{A} \to \mathcal{C} \otimes \mathbb{R}\langle t,dt \rangle$.  Then the map $O:V \to \mathcal{B}^{n+1} \oplus \mathcal{C}^n$ given by
  \[O(v)=\left(f(dv), g(v)+{\textstyle\int_0^1 H(dv)}\right)\]
  defines an obstruction class $[O] \in H^{n+1}(h:\mathcal{B} \to \mathcal{C};V)$ to producing an extension $\tilde f:\mathcal{A}\langle V \rangle \to \mathcal{B}$ of $f$ with $g \simeq h \circ \tilde f$ via a homotopy $\tilde H$ extending $H$.
  
  Moreover, if $h$ is surjective, then we can choose $H$ to be a constant homotopy.
\end{prop}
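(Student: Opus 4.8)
The plan is to run the standard obstruction-theoretic argument in three moves: check that $O$ is a cocycle in $\Hom(V,C^*(h))$; show that a primitive of $O$ is exactly the data needed to define $\tilde f$ and build a compatible $\tilde H$; and conversely show that the existence of such $\tilde f,\tilde H$ forces $[O]=0$. The only real tool is the pair of formal fiberwise-integration identities \eqref{eqn:I0t} and \eqref{eqn:I01}, which I would apply to the cochain $H(dv)\in(\mathcal{C}\otimes\mathbb{R}\langle t,dt\rangle)^{n+1}$; note that $H(dv)$ is closed, since $H$ is a chain map and $d(dv)=0$, so that $\int_0^1 dH(dv)=\int_0^t dH(dv)=0$. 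Throughout I use the endpoint conventions $H|_{t=0,\,dt=0}=g|_{\mathcal{A}}$ and $H|_{t=1,\,dt=0}=hf$ coming from the definition of homotopy above.

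First I would verify $dO=0$. Since $C^{n+1}(h)=\mathcal{B}^{n+1}\oplus\mathcal{C}^{n}$ with $d(b,c)=(db,\,h(b)-dc)$, the $\mathcal{B}$-component of $dO(v)$ is $f(d(dv))=0$, and for the $\mathcal{C}$-component I would use that $dg(v)=g(dv)$ (as $g$ is a chain map and $dv\in\mathcal{A}$) together with \eqref{eqn:I01}, which gives $d\int_0^1 H(dv)=H(dv)|_{t=1}-H(dv)|_{t=0}=hf(dv)-g(dv)$. The two contributions cancel, so $[O]\in H^{n+1}(h;V)$ is defined.

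Next, assuming $[O]=0$, I would write $O=dW$ with $W(v)=(b_v,c_v)\in\mathcal{B}^{n}\oplus\mathcal{C}^{n-1}$, so that matching components of $O(v)=(db_v,\,h(b_v)-dc_v)$ yields $f(dv)=db_v$ and $g(v)+\int_0^1 H(dv)=h(b_v)-dc_v$. The first equation says that $\tilde f(v):=b_v$ (with $\tilde f|_{\mathcal{A}}:=f$) defines a DGA homomorphism extending $f$, since $\mathcal{A}\langle V\rangle$ is free over $\mathcal{A}$ on $V$ and $d\tilde f(v)=f(dv)=\tilde f(dv)$. For the homotopy I would set
\[\tilde H(v):=g(v)\otimes 1+\int_0^t H(dv)+d(c_v\otimes t),\]
a degree-$n$ element; \eqref{eqn:I0t} together with $H(dv)|_{t=0}=g(dv)$ gives $d\tilde H(v)=H(dv)=\tilde H(dv)$, so $\tilde H$ extends to a DGA homomorphism $\mathcal{A}\langle V\rangle\to\mathcal{C}\otimes\mathbb{R}\langle t,dt\rangle$ restricting to $H$ on $\mathcal{A}$. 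The last two summands vanish at $t=0$, giving $\tilde H|_{t=0}=g$, and at $t=1$ they become $\int_0^1 H(dv)$ and $dc_v$, so $\tilde H(v)|_{t=1}=g(v)+\int_0^1 H(dv)+dc_v=h(b_v)=h\tilde f(v)$; hence $\tilde H|_{t=1}=h\circ\tilde f$, as wanted. For the converse, given $\tilde f$ and $\tilde H$ I would put $W(v):=\bigl(\tilde f(v),\,\int_0^1\tilde H(v)\bigr)$ and apply \eqref{eqn:I01} to $\tilde H(v)$ once more to get $d\int_0^1\tilde H(v)=h\tilde f(v)-g(v)-\int_0^1 H(dv)$, whence $dW=O$ and $[O]=0$.

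For the final assertion, I would reduce to the case $g|_{\mathcal{A}}=hf$ with $H$ constant, where $\int_0^1 H(dv)=0$ and $O(v)=(f(dv),g(v))$: when $h$ is surjective the homotopy $H$ can be lifted through $h$ to realize this reduction (this is the only place surjectivity enters), and more directly one can always homotope $g$ along $v\mapsto g(v)\otimes 1+\int_0^t H(dv)$ to the homotopic map $g'$ with $g'|_{\mathcal{A}}=hf$. I do not expect any step to be conceptually hard---each is forced once $C^*(h)$ and the integration identities are in hand---so the main obstacle is purely the bookkeeping: keeping signs and the $t=0$ versus $t=1$ roles consistent across the definition of homotopy, the two components of the relative differential, and the operators $\int_0^t$ and $\int_0^1$. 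A secondary care point is the final assertion, where one must confirm that straightening $g$ (or $f$) by a homotopy really does reduce to the constant-homotopy case without disturbing either the hypotheses or the conclusion.
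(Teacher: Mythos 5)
Your argument is correct and is exactly the standard one: the paper itself defers the proof to \cite[10.4, 10.5]{GrMo} and only records the formulas for the primitive $(b,c)$ and the extended homotopy \eqref{extHtpy}, and your verification via the identities \eqref{eqn:I0t} and \eqref{eqn:I01} reproduces precisely those formulas, together with the converse direction showing $[O]=0$ is necessary. The only soft spot is the final ``Moreover'' clause, where the clean justification is the one you name first---lifting the homotopy $H$ through the surjection $h\otimes\id$ (using that $\mathcal{A}$ is free) to replace $f$ by a homotopic $f'$ with $hf'=g|_{\mathcal{A}}$---but this is a minor point that does not affect the main obstruction-theoretic content.
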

When the obstruction vanishes, there are maps $(b,c): V \to \mathcal{B}^n \oplus \mathcal{C}^{n-1}$ such that $d(b,c)=O$, that is,
\begin{align*}
  db(v) &= f(dv) \\
  dc(v) &= h\circ b(v)-g(v)-{\textstyle\int_0^1 H(dv)}.
\end{align*}
Then for $v \in V$ we can set $\tilde f(v)=b(v)$ and
\begin{equation} \label{extHtpy}
  \tilde H(v)=g(v)+d(c(v) \otimes t)+{\textstyle\int_0^t H(dv)}.
\end{equation}
This gives a specific formula for the extension.

\subsection{Homotopy groups} \label{S:pi_n}

One important application of the obstruction theory above is to make explicit the automorphism $V_n \cong \Hom(\pi_n(Y),\mathbb R)$; this approach was pioneered by Sullivan \cite[\S11]{SulLong}.  There are several ways of doing this.

First, suppose that $h:\mathcal B \to \mathcal C$ is a minimal model $m_{S^n}:\mathcal M_{S^n} \to \Omega^*_\flat(S^n)$.  Given a Lipschitz map $f:S^n \to Y$, we would like to understand its rational homotopy class.  By repeatedly applying Proposition \ref{extExist} to $f^*m_Y$, since $m_X$ is a quasi-isomorphism, we obtain a map $\ph:\mathcal M_Y^* \to \mathcal M_{S^n}^*$.  In particular, this restricts to a homomorphism $V_n \to \mathbb{R}$ representing an element of $\pi_n(Y) \otimes \mathbb R$.  Since $\ph|_{\mathcal M_Y^*(n-1)}$ is the zero map, we can equivalently think of the resulting homotopy as a partial nullhomotopy of $f^*m_Y$ which is obstructed in degree $n$.

Alternatively, we can take $h:\mathcal B \to \mathcal C$ to be the restriction homomorphism $\Omega^*_\flat(D^{n+1}) \to \Omega^*_\flat(S^n)$.  Again applying Proposition \ref{extExist} to $f^*m_Y$, since the restriction homomorphism is surjective, we obtain a formal extension $\ph:\mathcal M_Y^*(n-1) \to \Omega^*_\flat(D^{n+1})$ of $f^*m_Y$.  At that point, we obtain an obstruction in $\Hom(V_n,H^{n+1}(D^{n+1},S^n;\mathbb R))$ to extending to $V_n$.  This again represents an element of $\pi_n(Y) \otimes \mathbb R$.

Both of these constructions are used in quantitative arguments further down.  Quantitative arguments of this type were first given by Gromov in \cite[Ch.~7]{GrMS}.

\subsection{The shadowing principle}

The quantitative obstruction theory in \cite{PCDF} is built upon a combination of the coisoperimetric Lemma \ref{coIP} and algebraic properties of DGAs.  Thus all of the results there are true, mutatis mutandis, after expanding the universe from smooth to flat forms.  In particular, given a homomorphism $\mathcal{M}_Y^* \to \Omega_\flat^*(X)$, one can produce a nearby genuine map $X \to Y$ whose Lipschitz constant depends on geometric properties of the homomorphism.

To state this precisely, we first introduce more definitions.  Let $X$ and $Y$ be finite simplicial complexes or compact Riemannian manifolds such that $Y$ is simply connected and has a minimal model $m_Y:\mathcal{M}_Y^*\to\Omega_\flat^*Y$.  Fix norms on the finite-dimensional vector spaces $V_k$ of degree $k$ indecomposables of $\mathcal{M}_Y^*$; then for homomorphisms $\ph:\mathcal{M}_Y^* \to \Omega_\flat^*(X)$ we define the formal dilatation
\[\Dil(\ph)=\max_{2 \leq k \leq \dim X} \lVert\ph|_{V_k}\rVert_{\mathrm{op}}^{1/k},\]
where we use the $L^\infty$ norm on $\Omega_\flat^*(X)$.  Notice that if $f:X \to Y$ is an $L$-Lipschitz map, then $\Dil(f^*m_Y) \leq CL$, where the exact constant depends on the dimension of $X$, the minimal model on $Y$, and the norms.  Thus the dilatation is an algebraic analogue of the Lipschitz constant.

Given a formal homotopy
\[\Phi:\mathcal{M}_Y^* \to \Omega_\flat^*(X \times [0,T]),\]
we can define the dilatation $\Dil_T(\Phi)$ in a similar way.  The subscript indicates that we can always rescale $\Phi$ to spread over a smaller or larger interval, changing the dilatation; this is a formal analogue of defining separate Lipschitz constants in the time and space direction, although in the DGA world they are not so easily separable.

Now we can state some results from \cite{PCDF}.
\begin{thm}[{A special case of the shadowing principle, \cite[Thm.~4--1]{PCDF}}] \label{shadow}
  Let $\ph:\mathcal{M}_Y^* \to \Omega_\flat^*(X)$ be a homomorphism with $\Dil(\ph) \leq L$ which is formally homotopic to $f^*m_Y$ for some $f:X \to Y$.  Then $f$ is homotopic to a $g:X \to Y$ which is $C(X,Y)(L+1)$-Lipschitz and such that $g^*m_Y$ is homotopic to $\ph$ via a homotopy $\Phi$ with $\Dil_{1/L}(\Phi) \leq C(X,Y)(L+1)$.
\end{thm}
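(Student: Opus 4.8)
The plan is to derive this from the shadowing principle \cite[Thm.~4--1]{PCDF}, of which it is the special case where the formal homotopy certifies that $\ph$ lies in the rational homotopy class of a \emph{genuine} map $f$; the work is then to check that the proof in \cite{PCDF} survives the replacement of smooth forms by flat forms. That proof is an induction over a fixed simplicial structure on $X$. Over the $k$-skeleton $X^{(k)}$ one carries a genuine map $g_k$ homotopic to $f|_{X^{(k)}}$, together with a formal homotopy $\Phi_k$ from $g_k^* m_Y$ to $\ph|_{X^{(k)}}$, everything having dilatation $O(L+1)$. The base case $k=1$ is vacuous: $Y$ is simply connected, so $g_1$ may be taken constant, and since $\mathcal{M}_Y^*$ has no generators in degree $\le 1$ while $X^{(1)}$ carries no forms in degree $\ge 2$, the restriction $\ph|_{X^{(1)}}$ is the trivial homomorphism.

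For the inductive step, let $\sigma$ be a $(k+1)$-cell of $X$. Its attaching map is nullhomotopic in $X$ (it bounds the cell $\sigma$), so it becomes nullhomotopic after composing with $f$, hence also after composing with $g_k \simeq f|_{X^{(k)}}$; thus it represents a \emph{genuinely} trivial class in $\pi_k(Y)$. Splicing $\Phi_k|_{\partial\sigma}$ to $\ph|_\sigma$ therefore yields a formal nullhomotopy of $g_k^* m_Y|_{\partial\sigma}$ over $\sigma$. Feeding this into the obstruction machinery of Proposition \ref{extExist} one elementary extension at a time, and invoking the coisoperimetric Lemma \ref{coIP} at each stage to choose a primitive whose $L^\infty$ norm is a fixed multiple of that of the obstruction, produces such a formal nullhomotopy with dilatation $O(L+1)$, the homotopies being given explicitly by \eqref{extHtpy}. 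One then realizes this formal extension as a genuine extension of $g_k$ over $\sigma$ of Lipschitz constant $O(L+1)$, using Sullivan's realization construction --- the quantitative device behind Proposition \ref{SntoSn} and the passage of \S\ref{S:pi_n} between homotopy classes and formal maps --- and extends $\Phi_k$ accordingly. As there are only finitely many cells, with number depending on $X$ alone, the accumulated constants are absorbed into $C(X,Y)$. The subscript in $\Dil_{1/L}(\Phi)$ merely records that $\Phi$ is run over a time interval of length $\approx 1/L$, i.e.\ it is $O(L)$-Lipschitz in space and $O(1)$-Lipschitz in time-per-scale.

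It then remains to confirm that nothing above requires smoothness, which is precisely what the preparatory sections arrange: $\Omega^*_\flat$ is a DGA under $\wedge$ and the distributional $d$ (\S\ref{S:flat}), it is quasi-isomorphic to $\Omega^*$ (Lemma \ref{flatQI}) so that $m_Y$ remains a minimal model, flat forms pull back along Lipschitz maps (Lemma \ref{flatLip}), fiberwise integration and hence the coisoperimetric inequality hold for flat forms (Lemma \ref{coIP}), the realization map $\rho$ and the compactness lemma for $\Hom(\mathcal{M}_Y^*,\Omega^*_\flat(X))$ are in place, and flat forms are still honest $L^\infty$ forms with $L^\infty$ differential, which is all the realization step needs in order to output a Lipschitz map. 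The one extra point is that flat forms are defined up to null sets, handled exactly as in the proof of Proposition \ref{LocRep}: fix representatives and observe that each DGA identity used holds almost everywhere.

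The main obstacle is not a new idea but the quantitative bookkeeping inherited from \cite{PCDF}: one must verify that the dilatation stays $O(L+1)$ all the way up the skeleton --- the danger being that iterating the coisoperimetric inequality and splicing homotopies across cells compounds constants --- and that those constants depend only on $X$ and $Y$ (through the minimal model and the fixed norms on the $V_k$), never on $L$. Pinning this down, together with checking that the realization step is insensitive to the smooth-versus-flat distinction, is the content of the argument; the rest is a transcription of \cite[\S4]{PCDF} into the flat category.
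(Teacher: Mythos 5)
Your overall strategy is exactly the paper's: Theorem \ref{shadow} is not reproved here but imported from \cite[Thm.~4--1]{PCDF}, with the only new content being the verification that the machinery survives the passage from smooth to flat forms, and your checklist for that (flat forms form a DGA closed under weak$^\flat$ limits, Lemmas \ref{flatQI}, \ref{flatLip}, \ref{coIP}, pullback along Lipschitz maps, compactness of bounded families) is precisely the one the paper assembles in \S\ref{S:flat} and \S\ref{S2}. One correction to your sketch of the internal mechanics of \cite{PCDF}, though: the induction there is \emph{not} run over a fixed simplicial structure on $X$ but over a subdivision at scale $\sim 1/L$, and this subdivision is the actual source of the linear bound --- over unit-size cells, ``realize the formal extension as a genuine $O(L+1)$-Lipschitz extension'' is false for general $Y$, since it amounts to controlling the distortion of classes in $\pi_{k+1}(Y)$. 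That is exactly why the paper's Lemma \ref{strongShadows}, which runs the skeletal induction \emph{without} subdividing, needs the extra hypothesis that $Y$ admits linear nullhomotopies; as written, your sketch would make that lemma (and hence much of the paper) vacuous. This does not affect the validity of your proof of the stated theorem, since you are deferring to \cite{PCDF} for the argument itself, but the distinction is central to the paper and worth keeping straight.
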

In other words, one can produce a genuine map by a small formal deformation of $\ph$.  We also present one relative version of this result:
\begin{thm}[{Cf.~\cite[Thm.~5--7]{PCDF}}] \label{relshadow}
  Let $f,g:X \to Y$ be two nullhomotopic $L$-Lipschitz maps and suppose that $f^*m_Y$ and $g^*m_Y$ are formally homotopic via a homotopy $\Phi:\mathcal{M}_Y^* \to \Omega_\flat^*(X \times [0,T])$ with $\Dil_T(\Phi) \leq L$.  Then there is a $C(X,Y)(L+1)$-Lipschitz homotopy $F:X \times [0,T] \to Y$ between $f$ and $g$.
\end{thm}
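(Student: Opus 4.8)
The plan is to obtain Theorem~\ref{relshadow} as the relative, flat-forms incarnation of the shadowing principle: one runs the obstruction-theoretic construction behind \cite[Thm.~5--7]{PCDF} relative to the subcomplex $A := X \times \{0,T\} \subset X \times [0,T]$. The only genuinely new point is that every ingredient of that construction survives the passage from smooth to flat forms, and this is exactly what has been set up above --- the quasi-isomorphism and Lipschitz-pullback Lemmas~\ref{flatQI} and \ref{flatLip}, the fact that weak$^\flat$ limits are DGA homomorphisms (Lemma~\ref{lem:dga}), the weak$^\flat$-compactness of norm-bounded families of homomorphisms $\mathcal{M}_Y^* \to \Omega_\flat^*(\,\cdot\,)$, the \emph{relative} coisoperimetric inequality (Lemma~\ref{coIP}), and the explicit formulas of Proposition~\ref{extExist}. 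The absolute shadowing statement Theorem~\ref{shadow} will be used as a black box along the way.

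First I would normalize the data: concatenating $\Phi$ with the given formal homotopies $f^*m_Y \simeq \Phi|_{t=0}$ and $\Phi|_{t=T} \simeq g^*m_Y$ --- each of which can be taken with dilatation $O(L)$ since $f$ and $g$ are $L$-Lipschitz --- and reparametrizing the time interval, we may assume $\Phi|_A = (f \sqcup g)^*m_Y$ on the nose, with $\Dil_T(\Phi) \le C(X,Y)(L+1)$. Because $f$ and $g$ are nullhomotopic, $f \sqcup g$ also extends to a genuine map $H_0 : X \times [0,T] \to Y$ (concatenate honest nullhomotopies); this is where the nullhomotopy hypothesis enters, and it is precisely the hypothesis one cannot drop, as the even-sphere examples of \cite{CMW} show.

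Next, run the relative shadowing. Working up the filtration of $\mathcal{M}_Y^*$ by the degree of the indecomposables $V_k$, one builds a genuine extension of $f \sqcup g$ one elementary extension at a time: at stage $k$, Proposition~\ref{extExist} detects an obstruction class, which one kills after (if necessary) a controlled formal homotopy, using that $\Phi$ is formally homotopic rel $A$ to $H_0^*m_Y$; every primitive of an exact flat form that the construction calls for is chosen in the relative space $\Omega_\flat^*(X \times [0,T],\, A)$ via Lemma~\ref{coIP}, so the map on $A$ is never disturbed. The dilatation of the accumulated data grows by at most a dimension-dependent factor at each of the finitely many stages, and shadowing $\Phi$ itself into a genuine homotopy (Theorem~\ref{shadow}, whose proof likewise only uses Lemma~\ref{coIP} and the compactness/DGA facts above) produces in the end a homotopy $F : X \times [0,T] \to Y$ from $f$ to $g$ with $\Lip F \le C(X,Y)(L+1)$.

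The main obstacle is not any single algebraic manipulation but two points of real content. First is the bookkeeping that keeps the accumulated constants independent of $L$: one must track how dilatations transform under the rescalings of the time direction recorded in the subscript of $\Dil_T$, and check that the finitely many stages of the induction compound only multiplicatively. Second, and more delicate, is the relative compatibility needed in the normalization step --- arranging that the genuine extension $H_0$ can, after correction by a boundary-fixing homotopy, be taken formally homotopic rel $A$ to $\Phi$. This is where the nullhomotopy hypothesis does its real work, and it is the part of the argument with no counterpart for arbitrary homotopy classes; verifying it is where I expect the bulk of the effort to go, following the corresponding step of \cite{PCDF} verbatim with flat forms in place of smooth ones.
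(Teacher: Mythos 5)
Your proposal is correct and follows essentially the same route as the paper, which proves nothing new here: Theorem \ref{relshadow} is imported directly from \cite[Thm.~5--7]{PCDF}, with the only point of substance being the one you identify, namely that the coisoperimetric inequality (Lemma \ref{coIP}) and the weak$^\flat$ compactness and limit lemmas let the whole construction run verbatim with flat forms. Your closing discussion of the nullhomotopy hypothesis also matches the paper's own remark that the formal homotopy need not lie in the relative homotopy class of a genuine one, and that this can be remedied by a controlled modification only in the zero homotopy class.
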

It is important for this result that the maps be nullhomotopic, rather than just in the same homotopy class.  This is because we did not require our formal homotopy to be in the relative homotopy class of a genuine homotopy.  In the zero homotopy class, one can always remedy this by a small modification, but in general the minimal size of the modification may depend in an opaque way on the homotopy class.

\subsection{The depth filtration}

Any minimal DGA has a filtration
\[0 \subseteq U_0 \subseteq U_1 \subseteq U_2 \subseteq \cdots\]
defined inductively as follows:
\begin{itemize}
\item $U_0$ is generated by all indecomposables with zero differential.
\item The product respects the filtration: if $u_1 \in U_i$ and $u_2 \in U_j$, then $u_1u_2 \in U_{i+j}$.
\item $U_i$ contains all indecomposables whose differentials are in $U_{i-1}$.
\end{itemize}
This filtration is canonical once one fixes the vector spaces of indecomposables.  We say that an element has \emph{depth} $i$ if it is contained in $U_i \setminus U_{i-1}$.

The filtration $\{U_i\}$ also induces a dual filtration
\[\pi_*(Y) \otimes \mathbb{F}=\Lambda_0 \supseteq \Lambda_1 \supseteq \Lambda_2 \supseteq \cdots\]
via the pairing between indecomposables in degree $n$ and $\pi_n(Y)$: $\alpha \in \Lambda_i \cap (\pi_n(Y) \otimes \mathbb F)$ if for every $j<i$, $U_n \cap V_j$ pairs trivially with $\alpha$.  In particular, $\Lambda_1$ is the kernel of the Hurewicz map with coefficients in $\mathbb F$.  This leads to a neat formulation of Gromov's distortion conjecture as discussed in the introduction:
\begin{defn}
  We say that $\pi_n(Y)$ \emph{satisfies Gromov's distortion conjecture} if any element of $\pi_n(Y) \cap (\Lambda_k \setminus \Lambda_{k+1})$ has distortion $\Theta(L^{n+k})$.
\end{defn}

\noindent The Leibniz rule tells us that if $x \in U_i$, then $dx \in U_{i-1}$.  By induction on $i$ one readily sees:
\begin{prop} \label{UtoU}
  Every DGA homomorphism respects the depth filtration, that is it sends $U_i$ into $U_i$.  Consequently, for every map between simply connected spaces, the induced homomorphism on rational homotopy groups respects the filtration $\{\Lambda_i\}$.
\end{prop}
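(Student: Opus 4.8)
The plan is to prove the algebraic assertion --- that a homomorphism $\ph\colon\mathcal A\to\mathcal B$ of minimal DGAs satisfies $\ph(U_i(\mathcal A))\subseteq U_i(\mathcal B)$ --- by induction on $i$, and then to read off the statement about $\{\Lambda_i\}$ by dualizing the pairing that defines it.

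As a preliminary I would establish the remark made just before the proposition, that $dU_i\subseteq U_{i-1}$, by its own induction on $i$: a typical element of $U_i$ is a sum of products $v_1\cdots v_k$ of indecomposables with $\sum_j\operatorname{depth}(v_j)\le i$, and differentiating by the Leibniz rule replaces a single factor $v_\ell$ by $dv_\ell$, which lies in $U_{\operatorname{depth}(v_\ell)-1}$ by definition of the depth; multiplicativity of the filtration then drops the whole product into $U_{i-1}$.

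For the proposition itself, set $U_{-1}=0$ and suppose inductively that $\ph(U_{i-1}(\mathcal A))\subseteq U_{i-1}(\mathcal B)$. Because $\ph$ is multiplicative and $U_j\cdot U_{i-j}\subseteq U_i$, and because $U_i(\mathcal A)$ is spanned by products of indecomposables of total depth at most $i$, it is enough to check that $\ph$ sends each indecomposable $v$ with $\operatorname{depth}(v)\le i$ into $U_i(\mathcal B)$. For such a $v$ we have $dv\in U_{i-1}(\mathcal A)$, hence $d\ph(v)=\ph(dv)\in U_{i-1}(\mathcal B)$ by the inductive hypothesis, and it remains to deduce $\ph(v)\in U_i(\mathcal B)$ from this. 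Once that is in hand the topological statement is immediate: a map $g\colon Y'\to Y$ of simply connected spaces induces a homomorphism $g^{\#}\colon\mathcal M_Y^*\to\mathcal M_{Y'}^*$ whose effect on indecomposables is dual to $g_*\colon\pi_*(Y')\otimes\mathbb{Q}\to\pi_*(Y)\otimes\mathbb{Q}$, so that the containment $g^{\#}(U_{i-1}(\mathcal M_Y^*))\subseteq U_{i-1}(\mathcal M_{Y'}^*)$ --- in particular for all indecomposables of depth $<i$ --- is exactly dual, under the defining pairing, to $g_*(\Lambda_i(Y'))\subseteq\Lambda_i(Y)$.

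The step I expect to be the real obstacle is the last line of the induction. A DGA homomorphism need not carry indecomposables to indecomposables, so $\ph(v)$ is in general an indecomposable together with a collection of decomposable correction terms, and one must verify that these do not escape $U_i(\mathcal B)$. I expect this to require working with a presentation of the minimal models whose splitting off of decomposables is compatible with the depth filtration (so that, e.g., closed indecomposables are genuinely closed, as in a bigraded or filtered model), after which $\ph(v)$ may be expanded in the indecomposables of $\mathcal B$: the decomposable part is absorbed into $U_i(\mathcal B)$ using the already-proved lower-depth cases together with multiplicativity, while the indecomposable part, having its differential in $U_{i-1}(\mathcal B)$, lies in $U_i(\mathcal B)$ by the defining clause of the filtration.
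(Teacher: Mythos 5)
You have correctly isolated the crux, and your suspicion about it is justified: the last line of your induction is a genuine gap, and the paper offers no help in closing it (its entire proof is the sentence ``by induction on $i$ one readily sees,'' preceded by the Leibniz-rule observation you reprove). The problem with your proposed fix is that the inductive hypothesis $\ph(U_{i-1}(\mathcal A))\subseteq U_{i-1}(\mathcal B)$ controls images of elements of $\mathcal A$, whereas the decomposable part $D$ of $\ph(v)=w+D$ is an element of $\mathcal B$ that is not the image of anything already handled; ``lower-depth cases together with multiplicativity'' therefore say nothing about it. Nor do you know that $dw\in U_{i-1}(\mathcal B)$: you only know $dw+dD=\ph(dv)\in U_{i-1}(\mathcal B)$, so the two halves of your final step each presuppose the other.

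In fact the first sentence of the proposition is false as literally stated, so no argument can close this gap. Take $\mathcal A=\mathcal M^*_{S^8}=\bigwedge(v_8,v_{15})$ with $dv_8=0$, $dv_{15}=v_8^2$, and $\mathcal B=\mathcal M^*_{S^3\vee S^3}\supseteq\bigwedge(a_1,a_2,u,\dots)$ with $da_i=0$, $du=a_1a_2$. Setting $\ph(v_8)=a_1u$ and $\ph(v_{15})=0$ defines a DGA homomorphism, since $d(a_1u)=-a_1^2a_2=0$ and $(a_1u)^2=-a_1^2u^2=0$. Here $v_8\in U_0(\mathcal A)$, but $a_1u\in U_1\setminus U_0$ in $\mathcal B$: the closed indecomposables of $\mathcal M^*_{S^3\vee S^3}$ are exactly $a_1,a_2$, so $U_0=\bigwedge(a_1,a_2)$ vanishes in degree $8$, while $u$ has depth $1$. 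What survives, and what the paper actually uses, is the second sentence: the \emph{indecomposable part} of $\ph(v)$ has depth at most that of $v$ (in the example it is $0$), and this dualizes, exactly as in your last paragraph, to $g_*(\Lambda_i)\subseteq\Lambda_i$. Proving that weaker statement requires tracking only indecomposable parts --- for instance by identifying $\{\Lambda_i\}$ with the lower central series of the homotopy Lie algebra, which any map of spaces respects --- rather than pushing all of $U_i$ forward; the latter strategy cannot be repaired.
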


The depth filtration allows us to define an alternate notion of ``size'' for homomorphisms $\ph:\mathcal{M}_Y^* \to \Omega^*X$, where $Y$ is compact and simply connected and $X$ is any metric complex.  Like dilatation above, this notion depends on the choice of norms on the spaces $V_k$, but this affects it only up to a constant depending on the dimension of $X$, since each $V_k$ is finite-dimensional.  Specifically, we define the \emph{$U$-dilatation} of $\ph$ to be
\[\Dil^U(\ph):=\max_{\substack{2 \leq n \leq \dim X\\0 \leq k<n}}
\lVert \ph|_{V_n \cap U_k} \rVert_{\mathrm{op}}^{\frac{1}{n+k}};\]
this is bounded above by dilatation but has significant advantages in that bounds on $U$-dilatation are often easy to preserve via inductive obstruction-theoretic constructions.

Using this formalism, we can show that the upper bound in Gromov's distortion conjecture holds in all cases:
\begin{prop} \label{dist<}
  For any simply connected $Y$, every element of $\pi_n(Y) \cap (\Lambda_k \setminus \Lambda_{k+1})$ has distortion $O(L^{n+k})$.
\end{prop}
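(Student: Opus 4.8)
The plan is to run the algebraicization machinery of the preceding subsections on an $L$-Lipschitz representative of $m\alpha$ and to read off the bound $m=O(L^{n+k})$ from the $U$-dilatation of the resulting formal maps, using the Leibniz consequence $d(U_i)\subseteq U_{i-1}$ to keep depths under control.

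So suppose $f\colon S^n\to Y$ is $L$-Lipschitz and represents $m\alpha$. By Lemma~\ref{flatLip} we may form $f^*m_Y\colon\mathcal M_Y^*\to\Omega_\flat^*(S^n)$, and Proposition~\ref{pullback-bound} gives $\Dil(f^*m_Y)\le C(Y)L$, hence also $\Dil^U(f^*m_Y)\le C(Y)L$. I would extract the rational homotopy class of $f$ using the $D^{n+1}$ construction of \S\ref{S:pi_n}: restrict $f^*m_Y$ to $\mathcal M_Y^*(n-1)$ and, since the restriction $\Omega_\flat^*(D^{n+1})\to\Omega_\flat^*(S^n)$ is surjective, apply Proposition~\ref{extExist} generator by generator in order of increasing degree to extend it to a formal map $\varphi\colon\mathcal M_Y^*(n-1)\to\Omega_\flat^*(D^{n+1})$. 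The obstructions to these extensions lie in $H^{j+1}(D^{n+1},S^n;V_j)$ for $j\le n-1$ and vanish because $H^*(D^{n+1},S^n)$ is concentrated in degree $n+1$, so $\varphi$ exists.

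The quantitative heart of the argument---and the step I expect to be the main obstacle---is to perform this extension with $\Dil^U(\varphi)\le C(Y)L$. One inducts on the generators: for $v\in V_j\cap U_\ell$ with $j\le n-1$, minimality of $\mathcal M_Y^*$ makes $dv$ a sum of products of strictly lower-degree generators of total degree $j+1$, and $dv\in U_{\ell-1}$ by the Leibniz rule, so each such product has total depth $\le\ell-1$; thus $\lVert\varphi(dv)\rVert_\infty\lesssim\Dil^U(\varphi)^{\,j+\ell}$ by the inductive bound, and Lemma~\ref{coIP} provides a primitive $\varphi(v)$ with $\lVert\varphi(v)\rVert_\infty\lesssim\lVert\varphi(dv)\rVert_\infty+\lVert(f^*m_Y)(v)\rVert_\infty$ (the homotopies produced alongside by Proposition~\ref{extExist} obey analogous bounds), while the case $dv=0$ is handled directly since $H^j(S^n)=0$ for $2\le j\le n-1$. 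Choosing the constant large enough in advance, this makes $\lVert\varphi|_{V_j\cap U_\ell}\rVert_\infty\lesssim L^{j+\ell}$ propagate through the finitely many steps.

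Finally, the obstruction to extending $\varphi$ over $V_n$ is a class in $H^{n+1}(h;V_n)\cong\Hom(V_n,\mathbb R)$ which, as explained in \S\ref{S:pi_n}, is the image of $[f]=m\alpha$ under $\pi_n(Y)\to\pi_n(Y)\otimes\mathbb R$; since this class does not depend on the choices made in building $\varphi$, it is computed by the explicit cocycle $O(v)=\bigl(\varphi(dv),\,(f^*m_Y)(v)+\int_0^1 H(dv)\bigr)$ of Proposition~\ref{extExist}. For $v\in V_n\cap U_k$ the same estimate as above gives $\lVert O(v)\rVert_\infty\lesssim\Dil^U(\varphi)^{\,n+k}\lesssim L^{n+k}$, and pairing this cocycle against the relative fundamental class of $(D^{n+1},S^n)$ recovers a fixed nonzero multiple of $m\langle v,\alpha\rangle$. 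Since $\alpha\in\Lambda_k\setminus\Lambda_{k+1}$ there is a $v\in V_n\cap U_k$ with $\langle v,\alpha\rangle\ne0$; fixing it yields $|m\langle v,\alpha\rangle|\lesssim L^{n+k}$, hence $\delta_\alpha(L)=O(L^{n+k})$ as claimed. (For $k=0$ this recovers Corollary~\ref{hur-bound}.)
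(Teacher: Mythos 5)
Your argument is correct, and the quantitative engine is the same as the paper's (induct over indecomposables, invoke the coisoperimetric Lemma~\ref{coIP} at each stage, and track depth via $d(U_\ell)\subseteq U_{\ell-1}$ so that the exponent attached to $V_j\cap U_\ell$ is $j+\ell$), but you route it through the other of the two dual constructions of \S\ref{S:pi_n}. The paper first proves the standalone Proposition~\ref{Udil-for-M}: for any $L$-Lipschitz $f:X\to Y$ between simply connected finite complexes there is $\ph:\mathcal M_Y^*\to\mathcal M_X^*$ with $f^*m_Y\simeq m_X\ph$ and $\Dil^U(m_X\ph)\le C(X,Y)L$; the distortion bound then falls out by evaluating $\ph$ on a depth-$k$ indecomposable $x\in V_n\cap U_k$ pairing nontrivially with $\alpha$, giving $N\le\Dil^U(\ph)^{n+k}\lvert x\rvert$. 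You instead extend $(f^*m_Y)|_{\mathcal M_Y^*(n-1)}$ over $D^{n+1}$ and bound the obstruction cocycle of Proposition~\ref{extExist} paired against the relative fundamental class. Your route is more self-contained for this one proposition (no need for $\mathcal M_{S^n}^*$ or the auxiliary maps $d^{-1}$ and $r$ on it), and it is in fact the method the paper itself deploys later in the proof of Proposition~\ref{NF:no}; the paper's route buys a reusable general statement that is needed again for Theorem~\ref{MM-only} and Lemma~\ref{strongShadows}. The two points you flag as requiring care --- controlling the primitive on $D^{n+1}$ together with the boundary-matching homotopy via \eqref{extHtpy}, and the independence of the degree-$n$ obstruction class from the choices made below degree $n$ --- are exactly the points the paper also treats (the former in the proof of Proposition~\ref{Udil-for-M}, the latter asserted in \S\ref{S:pi_n}), and your handling of them is adequate.
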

Thus Gromov's distortion conjecture is satisfied if this bound is sharp.  We will see that this is the case for scalable spaces.
  

In effect, this estimate is obtained by keeping track of the size of the formal nullhomotopy produced in \S\ref{S:pi_n}.  We prove it as a corollary of a useful, albeit technical, proposition for all maps between simply connected spaces:
\begin{prop} \label{Udil-for-M}
  Let $X$ and $Y$ be two simply connected finite complexes with minimal models $m_X:\mathcal M_X^* \to \Omega_\flat^*(X)$ and $m_Y:\mathcal M_Y^* \to \Omega_\flat^*(Y)$.  Then for every $L$-Lipschitz map $f:X \to Y$, there is a map $\ph:\mathcal M_Y^* \to \mathcal M_X^*$ such that $f^*m_Y \simeq m_X\ph$ and
  \[\Dil^U(m_X\ph) \leq C(X,Y)L.\]
\end{prop}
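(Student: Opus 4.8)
The plan is to build $\ph$ by the obstruction-theoretic induction of Proposition~\ref{extExist}, applied to the tower of elementary extensions $\mathcal{M}_Y^*(n-1)\hookrightarrow\mathcal{M}_Y^*(n)$, taking $\mathcal B=\mathcal M_X^*$, $\mathcal C=\Omega_\flat^*(X)$, $h=m_X$, and letting the source map $g$ at each stage be $f^*m_Y$. The crucial structural input is that $m_X$ is a quasi-isomorphism, so the mapping cone $C^*(m_X)$ is acyclic and every obstruction class in $H^{n+1}(m_X;V_n)$ automatically vanishes; hence the induction never stops, and the only content is to carry it out with control. Fix once and for all a norm on each (finite-dimensional) space $\mathcal M_X^n$ for $n\le\dim X+1$, together with a linear section $s_n$ of $d\colon\mathcal M_X^n\to\mathcal M_X^{n+1}$ on its image, and let $C(X)$ bound the operator norms of $m_X$, of the multiplication, and of the $s_n$ in these degrees. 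The inductive hypothesis to maintain is: a homomorphism $\ph\colon\mathcal M_Y^*(n-1)\to\mathcal M_X^*$ together with a formal homotopy $H\colon\mathcal M_Y^*(n-1)\to\Omega_\flat^*(X)\otimes\mathbb R\langle t,dt\rangle$ from $f^*m_Y$ to $m_X\ph$, such that for every indecomposable $u$ of degree $j\le n-1$ and depth $\ell$ one has $\lVert\ph(u)\rVert\le C(X,Y) L^{j+\ell}$ and $\lVert H(u)\rVert_\infty\le C(X,Y)L^{j+\ell}$. The base case $n=2$ is immediate: every degree-$2$ indecomposable is closed, hence of depth $0$, and since $m_X$ is an isomorphism on $H^2$ one picks $\ph(v)\in\mathcal M_X^2$ representing $[f^*m_Y(v)]$ with $\lVert\ph(v)\rVert\lesssim\lVert f^*m_Y(v)\rVert_\infty\lesssim L^2$, then corrects the homotopy via Lemma~\ref{coIP}. (Note $V_1=0$ by simple connectivity, so the induction indeed starts at $n=2$.)

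For the inductive step we extend over $V_n$. For $v\in V_n$ of depth $k$, the obstruction cocycle of Proposition~\ref{extExist} is $O(v)=\bigl(\ph(dv),\,f^*m_Y(v)+\int_0^1 H(dv)\bigr)$; since $C^*(m_X)$ is acyclic it has a primitive, i.e.\ there are $b(v)\in\mathcal M_X^n$ and $c(v)\in\Omega_\flat^{n-1}(X)$ with $db(v)=\ph(dv)$ and $dc(v)=m_X b(v)-f^*m_Y(v)-\int_0^1 H(dv)$, after which one sets $\tilde\ph(v)=b(v)$ and updates the homotopy by the formula~\eqref{extHtpy}. I take $b(v)=s_n\bigl(\ph(dv)\bigr)$, which makes sense because $\ph(dv)$ is closed and is exact (its cohomology class dies in $H^{n+1}(\mathcal M_X)\cong H^{n+1}(\Omega_\flat X)$ since it becomes $d\bigl(f^*m_Y(v)+\int_0^1 H(dv)\bigr)$ there, by~\eqref{eqn:I01}), and I take $c(v)$ to be a coisoperimetric primitive from Lemma~\ref{coIP}, legitimate because the right-hand side of the second equation is automatically exact.

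The bound now comes from the interaction of depth with the Leibniz rule. Since $v$ has depth $k$, its differential $dv$ lies in $U_{k-1}$ and, by minimality of $\mathcal M_Y^*$, is a sum of products of indecomposables of degree $<n$ whose depths add up to at most $k-1$; hence by multiplicativity and the inductive hypothesis $\lVert\ph(dv)\rVert\lesssim L^{(n+1)+(k-1)}=L^{n+k}$ and likewise $\lVert H(dv)\rVert_\infty\lesssim L^{n+k}$. As $\lVert f^*m_Y(v)\rVert_\infty\lesssim L^n\le L^{n+k}$ and the operators $\int_0^1,\int_0^t$ are norm-nonincreasing, we get $\lVert\ph(v)\rVert=\lVert s_n(\ph(dv))\rVert\lesssim L^{n+k}$ and $\lVert c(v)\rVert_\infty\lesssim L^{n+k}$ from Lemma~\ref{coIP}; feeding these into~\eqref{extHtpy}, and noting that realizing $\mathbb R\langle t,dt\rangle$ over $[0,1]$ changes $L^\infty$ norms by at most a constant, yields $\lVert\tilde H(v)\rVert_\infty\lesssim L^{n+k}$. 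This closes the induction in degree $n$; for $n>\dim X$ one simply extends $\ph$ over $V_n$ with no size constraint, again possible by acyclicity of $C^*(m_X)$, and this does not affect $\Dil^U$, which only sees $n\le\dim X$. Finally, since $m_X$ has operator norm $\le C(X)$ in each degree $\le\dim X$, we obtain $\lVert(m_X\ph)|_{V_n\cap U_k}\rVert_{\mathrm{op}}\le C(X,Y)L^{n+k}$ for all relevant $n,k$, so $\Dil^U(m_X\ph)=\max_{n,k}\lVert(m_X\ph)|_{V_n\cap U_k}\rVert_{\mathrm{op}}^{1/(n+k)}\le C(X,Y)L$, while $f^*m_Y\simeq m_X\ph$ holds by construction.

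The main obstacle is bookkeeping rather than ideas: one must carry the homotopy $H$ along with the map $\ph$ through the induction with the sharp exponent $j+\ell$ in place of the naive $j$, and check that the single extra power of $L$ lost at each depth level is exactly absorbed into the constants produced by the bounded section $s_n$ and by the coisoperimetric Lemma~\ref{coIP}. Subsidiary points to handle carefully are the verification, via~\eqref{eqn:I01}, that $\ph(dv)$ is genuinely exact in $\mathcal M_X^*$ (so that $s_n$ applies) and that the algebraic interval $\mathbb R\langle t,dt\rangle$ can be realized over $[0,1]$ without norm loss; both are routine given that $\mathcal M_X^*$ is finitely generated in each degree.
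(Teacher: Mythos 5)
Your overall strategy coincides with the paper's: an induction over the elementary extensions $\mathcal M_Y^*(n-1)\hookrightarrow\mathcal M_Y^*(n)$ via Proposition~\ref{extExist} with $h=m_X$, carrying the formal homotopy $H$ along with $\ph$, and using minimality plus the Leibniz rule to convert depth $k$ into the exponent $n+k$. The depth bookkeeping and the final passage to $\Dil^U(m_X\ph)$ are correct. However, there is a genuine gap in the inductive step: the assertion that for your choice $b(v)=s_n(\ph(dv))$ the closed $n$-form $m_Xb(v)-f^*m_Y(v)-\int_0^1 H(dv)$ is ``automatically exact'' is false. Acyclicity of the cone $C^*(m_X)$ guarantees only that \emph{some} primitive $(b,c)$ of the obstruction cocycle exists, not that every $b$ with $db=\ph(dv)$ admits a companion $c$: replacing $b(v)$ by $b(v)+z$ with $z$ closed shifts the class of the error form by $[m_Xz]$, which ranges over all of $H^n(X;\mathbb R)$. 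So for your particular $b(v)$ the class $\bigl[m_Xb(v)-f^*m_Y(v)-\int_0^1 H(dv)\bigr]\in H^n(X;\mathbb R)$ need not vanish, and Lemma~\ref{coIP} cannot be applied to produce $c(v)$. (You handle exactly this issue correctly in your base case $n=2$, where you choose $\ph(v)$ to represent the class $[f^*m_Y(v)]$; the same correction is needed in every degree $n$ with $H^n(X;\mathbb R)\neq 0$.)

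The repair is the one the paper makes: fix in advance a bounded linear map $r:H^*(X;\mathbb R)\to\mathcal M_X^*$ selecting closed representatives, let $a(v)$ denote the cohomology class of the error form $\zeta(v)=m_Xs_n(\ph(dv))-f^*m_Y(v)-\int_0^1 H(dv)$, and set $b(v)=s_n(\ph(dv))-r(a(v))$. Then $\zeta(v)-m_Xr(a(v))$ is exact and Lemma~\ref{coIP} applies. The correction is compatible with your estimates because the class $a(v)$ has a representative of $L^\infty$-norm at most $\lVert\zeta(v)\rVert_\infty\lesssim L^{n+k}$, and comparing this to the norm of the fixed representative $m_Xr(a(v))$ costs only a constant depending on $X$ (finite-dimensionality of $H^n(X;\mathbb R)$). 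With this correction inserted, your argument is the paper's proof.
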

\begin{proof}[Proof of Prop.~\ref{dist<}]
  Let $f:S^n \to Y$ be an $L$-Lipschitz map.  Then, as described in \S\ref{S:pi_n}, the corresponding DGA homomorphism $\ph:\mathcal M_Y^* \to \mathcal M_{S^n}^*$ constructed in Proposition \ref{Udil-for-M} restricts to a homomorphism $\ph_0:V_n \to \mathbb{R}$ which represents the pairing between $V_n$ and $[f]$.  In particular, if $f$ is a representative of $N\alpha \in \pi_n(Y) \cap (\Lambda_k \setminus \Lambda_{k+1})$, then there is some element $x \in V_n \cap U_k$ such that $\ph_0(x)=N$.  Therefore
  \[N \leq \Dil^U(\ph)^{n+k}\lvert x \rvert \leq C(\alpha)L^{n+k}.\]
  In other words, the distortion of $\alpha$ is $O(L^{n+k})$.
\end{proof}
\begin{proof}[Proof of Prop.~\ref{Udil-for-M}]
  Since $m_X$ is a quasi-isomorphism, there is a map $\ph$ which makes the diagram
  \[\xymatrix{
    \mathcal M_Y^* \ar@{-->}[r]^\ph \ar[d]_{m_Y} & \mathcal M_X^* \ar[d]^{m_X} \\
    \Omega_\flat^*(Y) \ar[r]^{f^*} & \Omega_\flat^*(X)
  }\]
  commute up to homotopy; we would like to build such a $\ph$ with controlled $U$-dilatation.  In the process, we will also need to control $\Dil^U_1(\Phi)$, where
  \[\Phi:\mathcal M_Y^* \to \Omega_\flat^*(X) \times \mathbb{R}\langle t,dt \rangle\]
  is the formal homotopy between $f^*m_Y$ and $m_X\ph$.
  
  We start by choosing a linear map $d^{-1}:d\mathcal M_X^* \to \mathcal M_X^*$ which gives a choice of primitive for each coboundary in $\mathcal M_X^*$.   We also choose a linear map $r:H^*(X;\mathbb R) \to \mathcal M_X^*$ fixing representatives for every cohomology class.  Since in every degree both the cohomology and the vector space of coboundaries are finite-dimensional, there are constants $C_n$, depending on the choices, such that for every $b \in d\mathcal M_X^n$,
  \[\lVert m_Xd^{-1}b \rVert_\infty \leq C_n\lVert m_Xb \rVert_\infty,\]
  and for every $a \in H^n(X;\mathbb Q)$,
  \[\lVert m_Xr(a) \rVert_\infty \leq C_n\inf \{\lVert \omega \rVert_\infty: \omega \in \Omega^n_\flat(X)\text{ and }[\omega]=a\}.\]
  
  We construct $\ph$ and $\Phi$ by induction on the stages of $\mathcal M_Y^*$, using Proposition \ref{extExist}.  Suppose we have built $\ph_n:\mathcal M_Y^*(n) \to \mathcal M_X^*$ and a homotopy
  \[\Phi_n:\mathcal M_Y^*(n) \to \Omega_\flat^*(X) \times \mathbb{R}\langle t,dt \rangle\]
  between $f^*m_Y|_{M_Y^*(n)}$ and $m_X\ph_n$ with the right estimates on dilatation.  We would like to extend both to $V_{n+1}$, the vector space of $(n+1)$-dimensional indecomposables of $Y$.  Fix a basis $\{v_i\}$ for $V_{n+1}$ which respects the depth filtration, that is, each $U_\ell \cap V_{n+1}$ is spanned by a subbasis.  For each $v_i \in U_\ell \setminus U_{\ell-1}$,
  \[\zeta(v_i)=m_Xd^{-1}\ph_n(dv_i)-f^*m_Y(v_i)-\textstyle{\int_0^1} \Phi_n(dv_i)\]
  is a closed form in $\Omega_\flat^{n+1}(X)$.  Its cohomology class has a representative $a(v_i) \in H^{n+1}(X;\mathbb R)$.  By Proposition \ref{coIP}, we can choose a primitive $\sigma(v_i)$ for $\zeta(v_i)-m_Xr(a(v_i))$ with
  \[\lVert \sigma(v_i) \rVert_\infty \leq C(n,Y)(1+C_n)\lVert \zeta(v_i) \rVert_\infty.\]
  In turn,
  \[\lVert \zeta(v_i) \rVert_\infty \leq C_n\Dil^U(\ph_n)^{(n+2)+(\ell-1)}+C(Y)L^{n+1}+\Dil^U_1(\Phi)^{(n+2)+(\ell-1)}.\]
  Then, using \eqref{extHtpy}, we choose
  \begin{align*}
    \ph_{n+1}(v_i) &= m_Xd^{-1}\ph_n(dv_i)-m_Xr(a(v_i)) \\
    \Phi_{n+1}(v_i) &= f^*m_Y(v_i)+d(\sigma(v_i) \otimes t)+\textstyle{\int_0^t} \Phi_n(dv_i).
  \end{align*}
  As desired, $\lVert \ph_{n+1}(v_i) \rVert_\infty$ and $\lVert \Phi_{n+1}(v_i) \rVert_\infty$ are both bounded by $C(X,Y)L^{n+\ell+1}$.
\end{proof}

Plugging the same estimates into the proof of \cite[Prop.~3--9]{PCDF} yields an additional technical lemma:
\begin{prop} \label{qLift}
  Suppose that $\Phi_k:\mathcal{M}_Y^*(k) \to \Omega^*_\flat(X) \otimes \mathbb{R}\langle t, dt \rangle$ is a partially defined homotopy between $\ph,\psi:\mathcal{M}_Y^* \to \Omega^*_\flat(X)$, and suppose that $\Dil^U(\ph)$, $\Dil^U(\psi)$, and $\Dil^U_1(\Phi_k)$ are all bounded by $L>0$.
  \begin{enumerate}[(i)]
  \item The obstruction to extending $\Phi_k$ to a homotopy
    \[\Phi_{k+1}:\mathcal{M}_Y^*(k+1) \to \Omega^*X \otimes \mathbb{R}\langle t,dt \rangle\]
    is a class in $H^k(X;V_{k+1})$ represented by a cochain whose restriction to $V_{k+1} \cap U_i$ has operator norm bounded by $C(k+1,Y)L^{k+1+i}$.
  \item If this obstruction class vanishes, then we can choose $\Phi_{k+1}$ so that $\Dil^U_1(\Phi_{k+1}) \leq C(k,Y)L$.
  \end{enumerate}
\end{prop}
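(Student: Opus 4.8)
The strategy is to run the proof of Proposition~\ref{Udil-for-M} essentially verbatim, replacing its map-extension step with the homotopy-extension version of Proposition~\ref{extExist} and tracking $U$-dilatation in place of dilatation. Concretely, I would apply Proposition~\ref{extExist} with $\mathcal A=\mathcal M_Y^*(k)$, $\mathcal A\langle V\rangle=\mathcal M_Y^*(k+1)$ (so $V=V_{k+1}$), $\mathcal B=\Omega_\flat^*(X)\otimes\mathbb R\langle t,dt\rangle$, $\mathcal C=\Omega_\flat^*(X)\times\Omega_\flat^*(X)$, $h$ the pair of endpoint evaluations $u\mapsto(u|_{t=0},u|_{t=1})$, $f=\Phi_k$, and $g=(\ph,\psi)|_{\mathcal M_Y^*(k+1)}$. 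Since $h$ is surjective the comparison homotopy may be taken constant, so the obstruction cochain sends an indecomposable $v$ to the endpoint mismatch
\[\mu(v)=\psi(v)-\ph(v)-{\textstyle\int_0^1}\Phi_k(dv),\]
which is closed by \eqref{eqn:I01} together with $\Phi_k(d\,dv)=0$; this form represents the obstruction class of part~(i) in $H^k(X;V_{k+1})$.

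For the norm bound in (i), fix a basis $\{v_i\}$ of $V_{k+1}$ subordinate to the depth filtration, with $v_i$ of some depth $\ell$. Since $dv_i$ is decomposable and lies in $U_{\ell-1}$, writing it as a sum of products of indecomposables of total degree $k+2$ and total depth at most $\ell-1$, and using that $\ph,\psi,\Phi_k$ are DGA homomorphisms with all three dilatations bounded by $L$, gives
\[\lVert\ph(v_i)\rVert_\infty,\ \lVert\psi(v_i)\rVert_\infty\le L^{(k+1)+\ell},\qquad \lVert\Phi_k(dv_i)\rVert_\infty\le C L^{(k+2)+(\ell-1)}=CL^{(k+1)+\ell}.\]
Since $\int_0^1$ only rescales by the constants $\tfrac1{i+1}$, the obstruction cochain restricted to $V_{k+1}\cap U_\ell$ has operator norm at most $C(k+1,Y)L^{(k+1)+\ell}$, as claimed.

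For (ii), suppose the obstruction vanishes, so each $\mu(v_i)$ is exact; by Lemma~\ref{coIP} choose a primitive $\tau_i\in\Omega_\flat^k(X)$ with $\lVert\tau_i\rVert_\infty\le C(k,X)\lVert\mu(v_i)\rVert_\infty\le CL^{(k+1)+\ell}$. Then define $\Phi_{k+1}$ on the new generators by the instance of \eqref{extHtpy} adapted to this set-up, namely
\[\Phi_{k+1}(v_i)=\ph(v_i)\otimes 1+{\textstyle\int_0^t}\Phi_k(dv_i)+d(\tau_i\otimes t);\]
identity~\eqref{eqn:I0t} shows this is a DGA homomorphism extending $\Phi_k$ with $\Phi_{k+1}(v_i)|_{t=0}=\ph(v_i)$ and $\Phi_{k+1}(v_i)|_{t=1}=\psi(v_i)$. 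The only $t$-dependence introduced lives in $t$-polynomial coefficients bounded by $1$ on $[0,1]$, so $\lVert\Phi_{k+1}(v_i)\rVert_\infty\le CL^{(k+1)+\ell}$, which contributes $(CL^{(k+1)+\ell})^{1/((k+1)+\ell)}\le C'L$ to the $U$-dilatation; since $\Phi_{k+1}$ restricts to $\Phi_k$ on $\mathcal M_Y^*(k)$, this yields $\Dil^U_1(\Phi_{k+1})\le C(k,Y)L$.

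The one point that needs genuine care—exactly as in Proposition~\ref{Udil-for-M}—is the depth bookkeeping: one must check that on each graded piece $V_n\cap U_i$ every error term ($\ph(v_i)$, $\psi(v_i)$, $\int_0^1\Phi_k(dv_i)$, and the primitive $\tau_i$) carries exponent at most $n+i$. This is precisely what the Leibniz-rule observation $x\in U_i\Rightarrow dx\in U_{i-1}$ buys, and it is what makes the estimate self-propagating through the induction over the stages of $\mathcal M_Y^*$. No difficulty arises beyond those already handled in Proposition~\ref{Udil-for-M}; this is the content of the remark that one ``plugs the same estimates into the proof of \cite[Prop.~3--9]{PCDF}''.
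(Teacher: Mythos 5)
Your proof is correct and matches the paper's intended argument: the paper's entire proof of Proposition \ref{qLift} is the remark that one plugs the $U$-dilatation estimates from Proposition \ref{Udil-for-M} into the homotopy-extension obstruction theory of \cite[Prop.~3--9]{PCDF}, which is precisely what you carry out via Proposition \ref{extExist} with the endpoint-evaluation surjection, the identities \eqref{eqn:I0t}--\eqref{eqn:I01}, the extension formula \eqref{extHtpy}, Lemma \ref{coIP}, and the Leibniz-rule depth bookkeeping. (One cosmetic point: your obstruction cocycle $\mu(v)$ is a closed $(k+1)$-form, so its class most naturally sits in $H^{k+1}(X;V_{k+1})$; the statement's $H^k(X;V_{k+1})$ refers to the same obstruction under the standard identification with the relative cohomology of the cylinder rel its ends, and this labeling has no effect on your estimates.)
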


\subsection{Formal spaces} \label{S:formal}

Many of the spaces we will be discussing in this paper are \emph{formal} in the sense of Sullivan.  A space $Y$ is formal if $\Omega^*Y$ is quasi-isomorphic to the cohomology ring $H^*(Y;\mathbb{R})$, viewed as a DGA with zero differential.  In other words, there is a map $\mathcal{M}_Y^* \to H^*(Y;\mathbb{R})$ which is a quasi-isomorphism.  (By \cite[Thm.~12.1]{SulLong}, the definition using any other ground field $\mathbb{F} \supseteq \mathbb{Q}$ is equivalent.)  More generally, we say a DGA is \emph{formal} if it is quasi-isomorphic to its cohomology ring.

Another way of saying this from the point of view of minimal models is this.  Formal spaces are those whose cohomology is a quotient of $\bigwedge U_0$, where $U_0$ is the subspace of indecomposables in the minimal model which have zero differential.  In other words, a minimal DGA is \emph{non-}formal if and only if it has a cohomology class which is not a linear combination of cup products of elements of $U_0$.

It follows that, while many rational homotopy types may have the same cohomology ring, there is exactly one formal one, and its minimal DGA can be constructed ``formally'' from the cohomology ring: at stage $k$, one adds generators that kill the relative $(k+1)$st cohomology of the map $\mathcal{M}_Y^*(k-1)(\mathbb{F}) \to H^*(Y;\mathbb{F})$.  This is the genesis of the term.

In fact, Halperin and Stasheff showed \cite[\S3]{HaSt} that for a formal space, one can choose the vector space of indecomposable generators\footnote{While the minimal model is unique up to isomorphism, such an isomorphism need not preserve this.} so that the depth filtration $\{U_i\}$ can be refined, non-canonically, to a bigrading $\mathcal{M}_Y^*=\bigwedge_i W_i$, where
\[(U_i \cap \text{indecomposables})
=W_i \oplus (U_{i-1} \cap \text{indecomposables}).\]

Spaces known to be formal include the simply connected symmetric spaces \cite{SulLong} and K\"ahler manifolds \cite{DGMS}, but there are many other examples, some of which are given in Table \ref{table}.

An important alternate characterization of formal spaces is that they are those $Y$ for which the \emph{grading automorphisms} $\rho_t:H^*(Y;\mathbb{F}) \to H^*(Y;\mathbb{F})$ taking $w \mapsto t^{\deg w}w$ lift to automorphisms of the minimal model \cite[Thm.~12.7]{SulLong}.  This lift is homotopically nonunique (for example, maps $S^2 \vee S^3 \to S^2 \vee S^3$ are characterized not only by the degrees on $S^2$ and $S^3$ but also by the Hopf invariant of the restriction-projection $S^3 \to S^2$) but all such lifts share certain properties.  In particular, all of them send $U_i$ to itself; moreover, given $w \in W_i \cap \mathcal{M}_Y^j$, they send $w \mapsto t^{i+j}w+w'$ where $w' \in U_{i-1}$.

Given a choice of $W_i$, one choice of lift sends every $w \in W_i \cap \mathcal{M}_Y^j$ to $t^{i+j}w$.  We refer to this as the automorphism associated to the bigrading $\{W_i\}$.

Similarly, after fixing a quasi-isomorphism $h_Y:\mathcal{M}_Y^*(\mathbb{Q}) \to H^*(Y;\mathbb{Q})$, the composition $\rho_t h_Y$ lifts to a canonical choice of automorphism of the minimal model, giving a ``one-parameter family'' of such automorphisms.  It turns out that we can always find enough genuine maps $Y \to Y$ implementing this choice:
\begin{thm}[{\cite[Corollary 1.1]{PWSM}}] \label{lem:Shiga}
  Let $Y$ be a formal finite CW complex.  There is an integer $t_0 \geq 1$, such that for every $z \in \mathbb{Z}$, $\rho_{zt_0}h_Y$ is realized by a genuine map $Y \to Y$.
\end{thm}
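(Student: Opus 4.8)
The plan is to first realize $\rho_t h_Y$ rationally, by a self-map of the minimal model depending polynomially on $t$, and then to promote this to a genuine self-map of $Y$ by obstruction theory up the Postnikov tower of the target, choosing $t_0$ so as to clear the finitely many torsion obstructions that arise. (We may assume $Y$ is simply connected, as is the case throughout the paper.)

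First I would fix a one-parameter family of lifts. Since $Y$ is formal, $\rho_t h_Y:\mathcal{M}_Y^* \to H^*(Y;\mathbb{Q})$ is a DGA homomorphism, and as $h_Y$ is a quasi-isomorphism, the standard lifting property of minimal models (proved by induction on elementary extensions via Proposition \ref{extExist}, applied with $\mathcal{C}=H^*(Y;\mathbb{Q})$, whose relative cohomology with $h_Y$ vanishes) produces a DGA endomorphism $\sigma_t:\mathcal{M}_Y^* \to \mathcal{M}_Y^*$ lifting it. Using a Halperin--Stasheff bigrading $\mathcal{M}_Y^*=\bigwedge_i W_i$ as in \S\ref{S:formal}, I would take the canonical lift $\sigma_t(w)=t^{i+j}w$ for $w \in W_i \cap \mathcal{M}_Y^j$; this is strictly multiplicative, with $\sigma_1=\id$ and $\sigma_0=$ the augmentation, and it depends polynomially on $t$. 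It corresponds to a self-map $\bar f_t$ of the rationalization $Y_{\mathbb{Q}}$.

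The heart of the argument is realizing $\sigma_t$ integrally for $t$ in a sublattice. I would build $f_t:Y \to Y$ by lifting successively through the Postnikov stages $P_k Y$ of the target, maintaining a map $f_t^{(k)}:Y \to P_k Y$ whose rationalization agrees with the $k$-truncation of $\bar f_t$. The obstruction to extending $f_t^{(k)}$ over $P_{k+1}Y$ is $(f_t^{(k)})^*\kappa_{k+1}$, where $\kappa_{k+1} \in H^{k+2}(P_kY;\pi_{k+1}Y)$ is the $k$-invariant; since $\bar f_t$ supplies a lift after rationalization, one arranges (adjusting $f_t^{(k)}$ within its rational homotopy class, which is possible because $Y$ is finite and simply connected, so all the groups in sight are finitely generated) that this obstruction lies in the torsion subgroup $A_k$, a finite group. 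Because $Y$ is finite-dimensional, only finitely many $A_k$ occur. Now track the dependence on $t$: since $\sigma_t$ multiplies degree-$j$, depth-$i$ indecomposables by $t^{i+j}$, and the obstruction is assembled from these by cup products and integral cohomology operations, the obstruction $o_k(t) \in A_k$ is a polynomial function of $t$ with $o_k(0)=0$ (realized by the constant map) and $o_k(1)=0$ (realized by $\id_Y$), hence with no constant term. Taking $t_0$ to be a common multiple of the exponents of all the $A_k$, we get $o_k(zt_0)=0$ for every $z \in \mathbb{Z}$ (the divisibility argument applies verbatim to negative $t$) and every $k$, so the lifts exist at every stage and assemble to $f_{zt_0}:Y \to Y$ realizing $\rho_{zt_0}h_Y$.

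The main obstacle, requiring the most care, is the last step of the construction: one must carry the polynomial dependence on $t$ as an extra inductive hypothesis (for instance by working over $Y$ times a parameter line), and simultaneously verify that at each stage the rational data supplied by $\sigma_t$ really does confine the obstruction to a finite group, uniformly in $t$. An alternative route, avoiding the Postnikov bookkeeping, uses the arithmetic fracture square for the simply connected finite complex $Y$: a self-map of $Y$ amounts to a compatible pair consisting of $\bar f_t$ on $Y_{\mathbb{Q}}$ together with a self-map of the profinite completion $\widehat Y$ inducing multiplication by $t^n$ on $H^n(\widehat Y;\widehat{\mathbb{Z}})$; the existence of the latter for $t$ in a suitable congruence class is precisely the existence of an unstable Adams operation $\psi^t$ on $\widehat Y$, and $t_0$ is then the product of the finitely many ``bad'' primes. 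Whichever version one carries out in full, the output is a single $t_0$ that works for all $z$ and all stages at once.
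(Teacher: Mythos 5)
The paper does not contain a proof of this statement: it is imported wholesale as \cite[Corollary 1.1]{PWSM}, and the sentence immediately following the theorem defers both the proof and the ``full history'' to that reference, noting only that a result of this type was originally stated by Shiga. So there is no in-paper argument to compare yours against; what can be judged is whether your sketch would stand on its own.

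As written, it would not. Your first step --- the canonical multiplicative lift $\sigma_t$ attached to a Halperin--Stasheff bigrading, acting on $W_i\cap\mathcal M_Y^j$ by $t^{i+j}$ and depending polynomially on $t$ --- is correct and is the standard starting point. The gap is exactly where you flag it: the assertion that the Postnikov obstruction $o_k(t)\in A_k$ is a polynomial function of $t$. That obstruction depends on the choice of the lift $f_t^{(k)}$, and your construction makes these choices one value of $t$ at a time; until the $f_t^{(k)}$ are produced as a single coherent family (e.g.\ parametrized over an arithmetic family of $t$'s, with the indeterminacy in $H^{k+1}(Y;\pi_{k+1}Y)$ controlled at every stage), there is no well-defined function $o_k(t)$, let alone a polynomial one whose nonconstant coefficients can be killed by divisibility. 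This is not routine bookkeeping: it is the entire content of the theorem, and it is precisely the point at which the earlier published arguments going back to Shiga were found to be incomplete --- which is why the present paper cites \cite{PWSM} rather than sketching the argument itself. Your fallback via the arithmetic fracture square has the same problem in different clothing: the existence of a self-map of the profinite completion $\widehat Y$ inducing multiplication by $t^n$ on $H^n$ (an unstable Adams operation) for a general formal finite complex is essentially equivalent to the statement being proved, not an available input. In short, the proposal correctly identifies the architecture and correctly locates the hard step, but leaves that step unproved.
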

A result of this type was originally stated in \cite{Shiga}; see \cite{PWSM} for the proof as well as the full history.

\section{A non-formal example} \label{S:NF}
In this section we discuss an example space $Y$ which is not formal, but satisfies condition (iv) of Theorem \ref{tfae}: for $n<\dim Y$, nullhomotopic $L$-Lipschitz maps $S^n \to Y$ have $O(L)$-Lipschitz nullhomotopies.  On the other hand, nullhomotopies of maps from higher-dimensional spheres cannot be made linear (so condition (iii) is not satisfied).  This demonstrates that the method of proof of Theorem \ref{tfae}, which relies on induction by skeleta to show that (iv) implies the other conditions, cannot be straightforwardly extended to show that non-formal spaces never admit linear nullhomotopies.  On the other hand, we also do not have a candidate non-formal space which could admit linear nullhomotopies from all domains.  Thus the following question remains open:
\begin{question}
  Do non-formal simply connected targets ever admit linear nullhomotopies of maps from all compact domains?  For that matter, from all spheres?
\end{question}
Our space is 8-dimensional, although a 6-dimensional example can also be constructed.  Namely, we take the CW complex
\[Y=(S^3_a \vee S^3_b \vee S^5) \cup_f e^8,\]
where $f:S^7 \to S^3 \vee S^3 \vee S^5$ is given by the iterated Whitehead product
\[\bigl[\id_a,\id_{S^5}+[\id_a,\id_b]\bigr],\]
with $\id_a$ and $\id_b$ representing the identity maps on the two copies of $S^3$.

Above and below we use the following conventions to define representatives of homotopy classes with good Lipschitz constants.  Let $\ph:S^k \to Y$ and $\psi:S^\ell \to Y$ be maps with Lipschitz constant $\leq L$.  The notation $[\ph,\psi]$ represents the standard Whitehead product of $\ph$ and $\psi$, that is the $C(k,\ell)L$-Lipschitz map $S^{k+\ell-1} \to Y$ given by composing $\ph \vee \psi$ with the attaching map of the $(k+\ell)$-cell of $S^k \times S^\ell$.  The notation $N\ph$ represents the composition of $\ph$ with a degree $N$, $O(N^{1/k})$-Lipschitz map $S^k \to S^k$, as constructed in Proposition \ref{SntoSn}.  Finally, if $k=\ell$, then $\ph+\psi$ represents the $C(k)L$-Lipschitz map given by composing $\ph \vee \psi$ with a map sending the northern and southern hemisphere to different copies of the sphere.
\begin{prop} \label{NF:yes}
  For $n \leq 7$, nullhomotopic maps $S^n \to Y$ have linear nullhomotopies.
\end{prop}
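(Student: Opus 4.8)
The plan is to split along the dimension of the domain. Set $W:=Y^{(7)}=S^3_a\vee S^3_b\vee S^5$; as a wedge of spheres it is scalable, so it satisfies conclusion (iii) of Theorem~\ref{tfae}. For $n<7$, a transversality argument pushes $g$, through an $O(L)$-Lipschitz homotopy, off the open $8$-cell of $Y$, and then composing with the $O(1)$-Lipschitz deformation retraction of the complement onto $W$ yields an $O(L)$-Lipschitz map $g'\colon S^n\to W$ with $g'\simeq g$ in $Y$. Since attaching an $8$-cell is a $7$-connected operation, $\pi_n(W)\to\pi_n(Y)$ is injective for $n<7$, so $g'$ is nullhomotopic already in $W$; scalability of $W$ provides an $O(L)$-Lipschitz nullhomotopy there, which I concatenate with the homotopy $g\simeq g'$ to finish this range.

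The case $n=7$ is the substance. After the same pushing-off step take $g\colon S^7\to W$; then $[g]=k[f]$ for some $k\in\mathbb Z$, because $\ker(\pi_7(W)\to\pi_7(Y))=\mathbb Z[f]$. Rationally $[f]$ corresponds to $[x_a,x_c]+[x_a,[x_a,x_b]]$ in the free graded Lie algebra $\pi_*(W)\otimes\mathbb Q$, and its depth-$1$ summand $[x_a,x_c]$ is nonzero, so $[f]\in\Lambda_1\setminus\Lambda_2$; Proposition~\ref{dist<} applied to the scalable $W$ then gives that $[f]$ has distortion $\Theta(L^8)$ in $\pi_7(W)$, whence $k=O(L^8)$, i.e.\ $k^{1/8}=O(L)$. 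The key construction is, for each $p$, an $O(p)$-Lipschitz representative of $p^8[f]$ carrying its own $O(p)$-Lipschitz nullhomotopy in $Y$. For this I use that $f=e\circ[\iota_3,\iota_5]$, where $e\colon S^3\vee S^5\to W$ sends the two spheres by $\id_a$ and by $\id_{S^5}+[\id_a,\id_b]$. Since $f$ is the attaching map of the $8$-cell it is nullhomotopic in $Y$, filled by the characteristic map $\chi$; and since $S^3\vee S^5$ is the $7$-skeleton of $S^3\times S^5$, the map $e$ extends to an $O(1)$-Lipschitz $\bar e\colon S^3\times S^5\to Y$ with $\bar e\circ\Xi=\chi$, where $\Xi\colon D^8\to S^3\times S^5$ is the characteristic map of the top cell. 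Precomposing with the $O(p)$-Lipschitz product $d_{p^3}\times d_{p^5}$ of the degree-$p^3$ and degree-$p^5$ maps of Proposition~\ref{SntoSn}, I set $\tilde g_p:=\bar e\circ(d_{p^3}\times d_{p^5})\circ\Xi|_{S^7}\colon S^7\to W$; using $(d_{p^3}\vee d_{p^5})_*[\iota_3,\iota_5]=p^3p^5[\iota_3,\iota_5]$ and $e_*[\iota_3,\iota_5]=[f]$, this represents $p^8[f]$ (it is $[p^3\,\id_a,\;p^5(\id_{S^5}+[\id_a,\id_b])]$ in the paper's conventions), and the disk $\bar e\circ(d_{p^3}\times d_{p^5})\circ\Xi\colon D^8\to Y$ is an $O(p)$-Lipschitz nullhomotopy of it.

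To complete the argument for a general $k=O(L^8)$, I would peel off $p^8$ with $p=\lfloor k^{1/8}\rfloor=O(L)$, leaving a remainder $r=k-p^8=O(L^7)$, and iterate; the successive remainders shrink like $L,\,L^{7/8},\,L^{49/64},\dots$, so after $O(\log\log L)$ stages one reaches a bounded element, handled by a fixed nullhomotopy. The $j$-th stage gives a map $\tilde g_{p^{(j)}}$ of Lipschitz constant $L_j=O(L^{(7/8)^{j-1}})$ together with an $O(L_j)$-Lipschitz nullhomotopy in $Y$; I would place the $j$-th map on a sub-ball of $S^7$ of volume $\asymp(L_j/L)^7$, so that after rescaling both the piece and its nullhomotopy are $O(L)$-Lipschitz, while the total volume used is $1+o(1)$, absorbed by slightly shrinking the ball of the dominant first stage. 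The difference between $g$ and $\sum_j\tilde g_{p^{(j)}}$ together with the bounded correction lies in $\ker(\pi_7(W)\to\pi_7(Y))$ and is zero by construction, hence nullhomotopic in $W$ and $O(L)$-Lipschitz; scalability of $W$ nullhomotopes it linearly inside $W\subseteq Y$, and concatenating all the pieces yields the desired $O(L)$-Lipschitz nullhomotopy.

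I expect the last paragraph to be the main obstacle: controlling the accumulation of Lipschitz constants across the $O(\log\log L)$ stages of the greedy decomposition of $k$, which is what forces the geometrically shrinking schedule of ball radii — a cruder accounting would only give a nullhomotopy of size $L(\log\log L)^{O(1)}$. A secondary point needing care is that the extension $\bar e$ and the characteristic maps can be realized with bounded Lipschitz constant.
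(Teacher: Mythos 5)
Your argument is correct, and for $n<7$ it coincides with the paper's (push into $W=S^3_a\vee S^3_b\vee S^5$, use injectivity of $\pi_n(W)\to\pi_n(Y)$ for $n<7$, and invoke scalability of the wedge). For $n=7$ you also share the paper's key mechanism --- extend $e\colon S^3\vee S^5\to Y$ over $S^3\times S^5$ using the nullhomotopy of $f$ given by the $8$-cell, and exploit the death of the Whitehead product in the product --- and your identification of $[f]\in\Lambda_1\setminus\Lambda_2$ with distortion $\Theta(L^8)$ matches the paper's use of Theorem~\ref{props}(d). Where you genuinely diverge is in how a general multiple $k[f]$, $k=O(L^8)$, gets filled. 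The paper writes $k=AB+C$ with $A\lesssim L^3$, $B\lesssim L^5$, $C\lesssim L^7$, observes that $[A\iota_3,B\iota_5]+C[\iota_3,\iota_5]$ is an $O(L)$-Lipschitz map $S^7\to S^3\times S^5$ that is nullhomotopic there, and applies condition (iii) of Theorem~\ref{tfae} to the scalable space $S^3\times S^5$ \emph{once} to get the linear filling, then pushes it forward. You instead only produce explicit fillings of the $8$th-power multiples $p^8[f]$ (via $\bar e\circ(d_{p^3}\times d_{p^5})\circ\Xi$) and compensate with a greedy decomposition of $k$ over $O(\log\log L)$ scales, together with the disjoint-ball packing and rescaling bookkeeping. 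This works --- the constants per stage are uniform, the fillings live on disjoint balls so Lipschitz constants do not compound, and the volumes are summable --- but it is more laborious than necessary: your own $\bar e$ already lets you quote scalability of $S^3\times S^5$ to fill $(AB+C)[\iota_3,\iota_5]$ in one shot, eliminating the iteration entirely. One cosmetic point: Proposition~\ref{dist<} gives the upper bound $k=O(L^8)$ for any simply connected target, so you do not need scalability of $W$ for that step (only for the subsequent $O(L)$-Lipschitz homotopy in $W$ between $g$ and your standard-form representative, a step the paper performs with the same appeal to Theorem~\ref{tfae}).
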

\begin{prop} \label{NF:no}
  There is a sequence of nullhomotopic maps $g_N:S^{13} \to Y$ with Lipschitz constant $O(N)$ but such that every nullhomotopy of $g_N$ has Lipschitz constant $\Omega(N^{17/16})$.
\end{prop}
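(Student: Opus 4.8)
The plan is to adapt the rescaling‑and‑compactness arguments of Theorems~\ref{CP2sharp4} and~\ref{grconj} to this non‑formal target, exploiting the iterated Whitehead‑product structure of $f$. First I would build the maps $g_N$. Since $f$ bounds the $8$‑cell, the class $[f]=[\id_a,\id_{S^5}]+[\id_a,[\id_a,\id_b]]$ vanishes in $\pi_7(Y)$, so $[\id_a,[\id_a,\id_b]]=-[\id_a,\id_{S^5}]$ there; as the Whitehead square of any element of $\pi_7$ is $2$‑torsion, the iterated Whitehead product
\[w:=\bigl[\,[\id_a,[\id_a,\id_b]],\,[\id_a,\id_{S^5}]\,\bigr]\in\pi_{13}(Y)\]
satisfies $2w=0$. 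Using the conventions preceding the proposition, I would let $g_N$ represent $2N^{17}w$, obtained by precomposing each of the four $S^3$‑factors occurring in $w$ with a degree‑$N^3$ self‑map of $S^3$ and the single $S^5$‑factor with a degree‑$N^5$ self‑map of $S^5$. A degree‑$N^k$ self‑map of $S^k$ is $O(N)$‑Lipschitz and Whitehead products of $O(N)$‑Lipschitz maps are $O(N)$‑Lipschitz, so $g_N$ is $O(N)$‑Lipschitz, and it is nullhomotopic since it represents $N^{17}(2w)=0$. The exponent $17=4\cdot 3+5$ is the largest total degree distributable over the sphere factors while keeping the product map $O(N)$‑Lipschitz, and it is this exponent, together with the ``size'' of the generator recording the non‑formality, that produces the exponent $\tfrac{17}{16}$.

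For the lower bound I would fix $\alpha,\beta\in\Omega^3_\flat(Y)$ and $\gamma\in\Omega^5_\flat(Y)$ representing generators of $H^3$ and $H^5$, choose $\lambda\in\Omega^5_\flat(Y)$ with $d\lambda=\alpha\wedge\beta$ (possible as $H^6(Y)=0$), and, reading the relation $[f]=0$ through the cup‑product/Massey‑product dictionary, choose $\rho\in\Omega^7_\flat(Y)$ with $d\rho=\alpha\wedge\gamma+\alpha\wedge\lambda$; the crucial feature, which is where the non‑formality of $Y$ enters, is that the closed form $\alpha\wedge\lambda$ is cohomologous to $-\alpha\wedge\gamma$ rather than to zero. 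Given any nullhomotopy $h_N\colon D^{14}\to Y$ with $\Lip(h_N)=L_N$, I would pull these forms back along $h_N$; since $D^{14}$ is contractible the pullbacks of the closed (but not exact) forms $\alpha$, $\gamma$, $\alpha\wedge\gamma$, $\alpha\wedge\lambda$ are exact, and by the coisoperimetric inequality (Lemma~\ref{coIP}) admit primitives with $L^\infty$ norm $\lesssim L_N^{3},L_N^{5},L_N^{8},L_N^{8}$. From these pullbacks and exactly two such coisoperimetric primitives (of $\alpha\wedge\gamma$ and of $\alpha\wedge\lambda$) I would assemble a closed $14$‑form $\Theta_N$ on $D^{14}$ with $\lVert\Theta_N\rVert_\infty\lesssim L_N^{16}$: a pullback of a degree‑$k$ form costs a factor $L_N^{k}$ (Proposition~\ref{pullback-bound}), while a coisoperimetric primitive of a degree‑$k$ form costs $L_N^{k}$ but contributes only $k-1$ to the total form degree, so the two primitives raise the cost exponent from $14$ to $16$. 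On the other hand, Stokes' theorem identifies $\int_{D^{14}}\Theta_N$ with a Whitehead–Massey‑type invariant of $g_N$ which is multilinear in the degrees of the five sphere factors and, because $Y$ is non‑formal, does not vanish on $w$; hence $\int_{D^{14}}\Theta_N=cN^{17}$ with $c\neq 0$. This value is independent of $h_N$ and of the chosen primitives for the same reason as in Theorems~\ref{CP2sharp4} and~\ref{grconj}: $Y$ is $8$‑dimensional, so the relevant top‑degree data pull back trivially from $S^{14}$, and moreover $\pi_8(Y)\otimes\mathbb{Q}=0$. Comparing the two estimates gives $cN^{17}\lesssim L_N^{16}$, i.e.\ $L_N=\Omega(N^{17/16})$. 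Equivalently, assuming $L_N=o(N^{17/16})$ one divides the pulled‑back forms by the appropriate powers of $L_N$ and passes to weak${}^\flat$ limits (Lemma~\ref{lem:dga}, Banach–Alaoglu): the rescaled primitives carry a strictly larger power of $L_N$ than their degrees warrant and so tend to $0$, the limiting forms $\alpha^\infty,\beta^\infty\in\Omega^3,\ \gamma^\infty,\lambda^\infty\in\Omega^5$ satisfy the ``formal'' relations $\alpha^\infty\wedge\beta^\infty=0$, $d\lambda^\infty=0$, $\alpha^\infty\wedge\gamma^\infty=\alpha^\infty\wedge\lambda^\infty$, while nonvanishing of the invariant forces $\alpha^\infty\wedge\gamma^\infty\neq 0$; at a point $p$ where this holds these forms define a graded‑algebra homomorphism from a fixed finitely presented algebra into $\bigwedge^*\mathbb{R}^{14}$ which is injective by the Poincar\'e‑duality argument of Proposition~\ref{LocRep} but admits no embedding into $\bigwedge^*\mathbb{R}^N$ for any $N$, by a computation in the spirit of Theorem~\ref{subalg} — and the non‑embeddability of that algebra is precisely the non‑formality of $Y$.

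I expect the main obstacle to be the bookkeeping that pins the cost exponent at exactly $16$ (and hence forces the exponent $\tfrac{17}{16}$ rather than something nearby), together with isolating the precise detecting invariant and the target algebra, and checking that this secondary invariant of the \emph{nullhomotopic} map $g_N$ is genuinely well defined. As in the formal counterexamples of \S\ref{S4}, the latter ultimately rests on the low dimension of $Y$ and the vanishing of $\pi_8(Y)\otimes\mathbb{Q}$; I would expect the cleanest route to be to phrase everything through the depth filtration and $U$‑dilatation of \S\ref{S2}, where the bound becomes $\Dil^U\gtrsim(N^{17})^{1/16}$ applied to the formal nullhomotopy provided by $h_N$.
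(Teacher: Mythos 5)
Your construction of $g_N$ and your top-level bookkeeping ($N^{17}$ from the five sphere factors against $L^{16}$ from two degree-$8$ coisoperimetric primitives) do match the paper, which takes $g_N=[[N^3\id_a,N^5\id_{S^5}],[N^3\id_a,[N^3\id_a,N^3\id_b]]]$ and detects it with the degree-$13$ generator $z$ of $\mathcal{M}_W^*$, $W=S^3_a\vee S^3_b\vee S^5$, where $dz=u_cv_b-v_cu_b-cw_b+w_cb$. But the execution of the lower bound has a real gap: the $14$-form you propose --- essentially the product of a primitive of $h_N^*(\alpha\wedge\gamma)$ with a primitive of $h_N^*(\alpha\wedge\lambda)$ --- is not closed, and its integral over $D^{14}$ is not shown to be independent of $h_N$ or of the chosen primitives. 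To get a well-defined functional you need the full cocycle $\epsi(dz)$, which carries three further correction terms ($-v_cu_b-cw_b+w_cb$) requiring primitives in degrees $9$, $9$ and $11$; each of these must separately be checked to contribute only $O(L^{16})$, and the well-definedness comes from the vanishing of $H^*(D^{14},S^{13};\mathbb{R})$ through degree $13$ (Proposition \ref{extExist} applied to the surjection $\Omega^*_\flat(D^{14})\to\Omega^*_\flat(S^{13})$), not from ``$Y$ is $8$-dimensional'' or from $\pi_8(Y)\otimes\mathbb{Q}=0$. Note also that the functional must be read on maps $S^{13}\to W$, evaluated at $F|_{\partial}$ for $F:(D^{14},S^{13})\to(Y,W)$: as an $\mathbb{R}$-valued homotopy invariant of maps into $Y$ it would necessarily vanish on your class $w$, which is $2$-torsion in $\pi_{13}(Y)$.

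The closing ``equivalently'' paragraph does not work. The limiting relations you extract ($\alpha^\infty\wedge\beta^\infty=0$, $d\lambda^\infty=0$, $\alpha^\infty\wedge\gamma^\infty=\alpha^\infty\wedge\lambda^\infty\neq0$) are trivially realized pointwise in $\bigwedge^*\mathbb{R}^{14}$ (take $\lambda^\infty=\gamma^\infty$), so there is no local non-embedding of the kind exploited in \S\ref{S3}--\S\ref{S4}; relatedly, ``$\alpha\wedge\lambda$ cohomologous to $-\alpha\wedge\gamma$'' is not an invariant statement, since replacing $\lambda$ by $\lambda+t\gamma$ shifts $[\alpha\wedge\lambda]$ by an arbitrary multiple of $[\alpha\wedge\gamma]$. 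The actual mechanism, which the paper isolates via the non-minimal model $\mathcal{M}_W^*\oplus\mathbb{R}y$ with $d'x=dx+\langle x,[f]\rangle y$, is that $ac$ and $au_b$ represent nonzero classes in $H^8(Y;\mathbb{R})$, so the primitives $\epsi(u_c)$ and $\epsi(v_b)$ must be taken on $D^{14}$ at cost $O(L^8)$ rather than pulled back from forms on $Y$ at cost $O(L^7)$. This is a statement about norms of primitives, invisible at a single point, which is precisely why the paper's proof here is a direct estimate rather than a rescaling-and-compactness argument.
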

\begin{proof}[Proof of Prop.~\ref{NF:yes}.]
  For $n \leq 7$, any $L$-Lipschitz map $S^n \to Y$ has an $O(L)$-Lipschitz homotopy to one whose image lies in the 7-skeleton of $Y$, $W=S^3 \vee S^3 \vee S^5$.  Moreover, if $n<7$, such a map is nullhomotopic in $Y$ if and only if it is nullhomotopic in $W$.  Since this is a scalable space, any such nullhomotopy can be made $O(L)$-Lipschitz by Theorem \ref{tfae}.

  There remains the case $n=7$.  Clearly a map $g:S^7 \to W$ is nullhomotopic in $Y$ if and only if it is in the homotopy class $N[f] \in \pi_7(W)$ for some $N$.  By Theorem \ref{props}\eqref{B:dist}, the distortion of $[f]$ in $W$ is $\sim L^8$, meaning that if $g$ is $L$-Lipschitz, it is homotopic in $W$ to the $O(L)$-Lipschitz map
  \[g'=\bigl[A\id_{S^3_a},B\bigl(\id_{S^5}+[\id_a,\id_b]\bigr)\bigr]+Cf\]
  for $A \lesssim L^3$, $B \lesssim L^5$, and $C \lesssim L^7$, and again by Theorem \ref{tfae}, this homotopy can be made $O(L)$-Lipschitz.

  Finally, we need to show that $g'$ has an $O(L)$-Lipschitz nullhomotopy in $Y$.  So consider a map $p:S^3 \times S^5 \to Y$ sending the $S^3$ factor to $S^3_a$ and the $S^5$ factor to $Y$ via $\id_{S^5}+[\id_a,\id_b]$.  Since $S^3 \times S^5$ is scalable, the map
  \[[A\id_{S^3},B\id_{S^5}]+C[\id_{S^3},\id_{S^5}]\]
  for $A \lesssim L^3$, $B \lesssim L^5$, and $C \lesssim L^7$ has an $O(L)$-Lipschitz nullhomotopy there.  Pushing this nullhomotopy to $Y$ via $p$ gives an $O(L)$-Lipschitz nullhomotopy of $g'$.
\end{proof}
\begin{proof}[Proof of Prop.~\ref{NF:no}.]
  The map
  \[g_N=[[N^3\id_a,N^5\id_{S^5}],[N^3\id_a,[N^3\id_a,N^3\id_b]]]\]
  is $O(N)$-Lipschitz, and it is homotopic in $S^3_a \vee S^3_b \vee S^5$ to $[[N^3\id_a,N^5\id_{S^5}],N^9f]$ and therefore nullhomotopic in $Y$.  We will show that any nullhomotopy has Lipschitz constant $\Omega(N^{17/16})$.

  We will need to understand some of the rational homotopy theory of the subspace
  \[W=S^3_a \vee S^3_b \vee S^5 \subset Y.\]
  We note that $W$ is formal and therefore its minimal DGA can be computed formally. Here are some of the generators in low dimensions (the number $n$ in $x^{(n)}$ denotes the degree of a generator $x$):
  \[\mathcal{M}_W^* \supset \left\langle \begin{array}{r | l}
    a^{(3)},b^{(3)},c^{(5)},u_b^{(5)} & da=db=dc=0, du_b=ab \\
      u_c^{(7)}, v_b^{(7)}, w_b^{(9)}, v_c^{(9)} &
      du_c=ac, dv_b=au_b, dw_b=av_b, dv_c=au_c \\
    w_c^{(11)}, z^{(13)} & dw_c=av_c, dz=u_cv_b-v_cu_b-cw_b+w_cb
  \end{array}\right\rangle.\]
  We will show two facts: first, $\langle z,[g_N] \rangle \sim N^{17}$; second, if $F:(D^{14},\partial D^{14}) \to (Y,W)$ is an $L$-Lipschitz map, then $\langle z, [F|_\partial] \rangle=O(L^{16})$.  Therefore, if $F$ is a nullhomotopy of $g_N$, then its Lipschitz constant is $\Omega(N^{17/16})$.

  Since $[g_N]=N^{17}[[\id_a,\id_{S^5}],[\id_a,[\id_a,\id_b]]]=N^{17}[g_1]$, to see that $\langle z,[g_N] \rangle \sim N^{17}$, it is enough to show that the pairing $\langle z, [g_1] \rangle$ is nontrivial.  As explained in \cite[\S 13(e)]{FHT}, the Whitehead product is dual to the quadratic part of the differential in the minimal model.  In particular, $u_b$ is dual to $[\id_a,\id_b]$ and $u_c$ is dual to $[\id_a,\id_{S^5}]$; therefore $v_b$ is dual to $[\id_a,[\id_a,\id_b]]$; and finally, since $dz$ contains the term $u_cv_b$, $z$ pairs nontrivially with $[g_1]$.

  Now suppose that $F:(D^{14},\partial D^{14}) \to (Y,W)$ is an $L$-Lipschitz map.  We can compute the pairing $\langle z, [F|_\partial] \rangle$ using the second method discussed in \S\ref{S:pi_n}.  Fix a minimal model $m_W:(\mathcal{M}_W^*,d)\to\Omega^*W$; we attempt to extend $(F|_\partial)^*m_W$ to a map $\epsi:\mathcal{M}_W^* \to \Omega^*D^{14}$.  Since the relative cohomology is zero through dimension 13, we do not encounter an obstruction until we try to extend to 13-dimensional indecomposables.  At that point, regardless of previous choices, the obstruction to extending to $z$ is given by the pairing, that is,
  \[\int_{D^{14}} \epsi(dz)=\langle z, [F|_\partial] \rangle.\]
  We will use the map $F$ to build one such extension with bounds on the sizes of the forms; in particular we will make sure that $\lVert \epsi(dz) \rVert=O(L^{16})$, so that the pairing is also $O(L^{16})$.

  By \cite[\S 13(d)]{FHT}, $m_W$ can be extended to a quasi-isomorphism $m_Y:(\mathcal{M}_W^* \oplus \mathbb{R}y,d') \to \Omega^*Y$ which we use as a non-minimal model for $Y$.  Here $y$ satisfies $y^2=0=dy$ and $xy=0$ for every $x \in \mathcal{M}_W^*$, and $d'=d$ except for 7-dimensional indecomposables $x$ in $\mathcal{M}_W^*$, for which
  \[d'x=dx+\langle x,[f] \rangle y.\]
  In particular, $m_Yy$ is a closed form concentrated in the interior of the 8-cell, representing the fundamental class of $H^8(Y,W;\mathbb{R})$.

  To build the extension of $(F_\partial)^*m_W$ to $\Omega^*D^{14}$, we first send
  \[a \mapsto F^*m_Ya,\quad b \mapsto F^*m_Yb,\quad c \mapsto F^*m_Yc,\quad
  u_b \mapsto F^*m_Yu_b;\]
  then choose a 7-form $\omega \in \Omega^*(D^{14},\partial D^{14})$ satisfying
  $d\omega=F^*m_Yy$ and $\lVert\omega\rVert_\infty=O(L^8)$ and send
  \[u_c \mapsto F^*m_Yu_c-\langle u_c,[f] \rangle\omega, \quad
  v_b \mapsto F^*m_Yv_b-\langle v_b,[f] \rangle\omega;\]
  and finally, using Lemma \ref{coIP}, pick forms $\epsi(w_b)$, $\epsi(v_c)$, and $\epsi(w_c)$ satisfying
  \[\lVert\epsi(w_b)\rVert_\infty=O(L^{11}), \quad
  \lVert\epsi(v_c)\rVert_\infty=O(L^{11}), \quad
  \lVert\epsi(w_c)\rVert_\infty=O(L^{13}).\]
  This construction gives us $\lVert \epsi(dz) \rVert_\infty=O(L^{16})$.
\end{proof}

\section{Proof of Theorem \ref{tfae}} \label{S5}
In this section we prove Theorem \ref{tfae} together with Theorem \ref{props}(a).
First, we restate these results:
\begin{thm*}
  The following are equivalent for a simply connected finite complex $Y$:
  \begin{enumerate}[(i)]
  \item There is a DGA homomorphism $i:H^*(Y;\mathbb{R}) \to \Omega_\flat^*Y$ which sends each cohomology class to a representative of that class.
  \item There is a constant $C(Y)$ and infinitely many (indeed, a logarithmically dense set of) $p \in \mathbb{N}$ such that there is a $C(Y)(p+1)$-Lipschitz self-map which induces multiplication by $p^n$ on $H^n(Y;\mathbb{R})$.
  \item $Y$ is formal, and for all finite simplicial complexes $X$, nullhomotopic $L$-Lipschitz maps $X \to Y$ have $C(X,Y)(L+1)$-Lipschitz nullhomotopies.
  \item $Y$ is formal, and for all $n<\dim Y$, nullhomotopic $L$-Lipschitz maps $S^n \to Y$ have $C(X,Y)(L+1)$-Lipschitz homotopies.
  \end{enumerate}
  Moreover, this property is a rational homotopy invariant.
\end{thm*}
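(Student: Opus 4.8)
The plan is to prove the equivalence as a strongly connected diagram of implications --- $\text{(i)}\Rightarrow\text{(ii)}\Rightarrow\text{(iii)}\Rightarrow\text{(iv)}\Rightarrow\text{(ii)}$ together with $\text{(ii)}\Rightarrow\text{(i)}$ --- and then to deduce rational invariance by transferring condition (ii) along a rational equivalence. First I would note that each of (i)--(iv) forces $Y$ to be formal --- for (i) because the hypothesized map $i\colon H^*(Y;\mathbb R)\to\Omega_\flat^*(Y)$ hits a representative in each degree, hence is a quasi-isomorphism, which together with Lemma~\ref{flatQI} exhibits $Y$ as formal; for (ii) by Sullivan's criterion that a single lift of a grading automorphism $\rho_p$, $p\ge2$, forces formality; for (iii) and (iv) by hypothesis --- so that throughout one has the minimal model $m_Y\colon\mathcal M_Y^*\to\Omega_\flat^*(Y)$, the formal quasi-isomorphism $h_Y\colon\mathcal M_Y^*\to H^*(Y;\mathbb R)$, the depth filtration $\{U_i\}$ with $U_0$ the closed indecomposables, and the grading automorphisms $\Psi_p\in\operatorname{Aut}(\mathcal M_Y^*)$ lifting $\rho_p h_Y$. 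Now $\text{(iii)}\Rightarrow\text{(iv)}$ is immediate, since (iv) is the case $X=S^n$, $n<\dim Y$, of (iii) with the finitely many constants amalgamated. For $\text{(i)}\Rightarrow\text{(ii)}$, fix $t_0$ as in Theorem~\ref{lem:Shiga}; for each $p\in t_0\mathbb N$ there is a genuine self-map $g_p\colon Y\to Y$ with $g_p^*m_Y\simeq m_Y\Psi_p$, and since $ih_Y$ is a quasi-isomorphism out of the minimal DGA it is homotopic to $m_Y$, giving $m_Y\Psi_p\simeq ih_Y\Psi_p=i\rho_p h_Y$; this last homomorphism sends a degree-$k$ indecomposable to $p^k$ times a fixed representative form, so $\Dil(i\rho_p h_Y)=O(p)$, and feeding $g_p$ and the formally homotopic $i\rho_p h_Y$ into the shadowing principle (Theorem~\ref{shadow}) yields a $C(Y)(p+1)$-Lipschitz self-map homotopic to $g_p$, hence inducing multiplication by $p^n$ on $H^n(Y;\mathbb R)$.

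The quantitative heart is $\text{(ii)}\Rightarrow\text{(iii)}$, which I would prove by generalizing the first author's combing argument for maps $S^3\to S^2$. Given a nullhomotopic $L$-Lipschitz map $f\colon X\to Y$, I homotope it through maps ``combed at scale $2^j$'' for $j=1,\dots,O(\log L)$, where being combed at scale $K$ means being homotopic to a composite of a scaling self-map from (ii) with an $O(L/K)$-Lipschitz map relative to a coarse decomposition of $X$; the homotopy taking scale $2^{j-1}$ to scale $2^{j}$ is produced by the relative shadowing principle (Theorem~\ref{relshadow}), is $O(2^j)$-Lipschitz, and occupies time $O(2^j)$, so that the concatenation of all of them is $C(X,Y)(L+1)$-Lipschitz and the fully combed map is organized enough to admit an $O(1)$-Lipschitz nullhomotopy by hand. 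For $\text{(iv)}\Rightarrow\text{(ii)}$, I would begin with a genuine self-map $f_p\colon Y\to Y$ inducing $\rho_p$ on cohomology (again Theorem~\ref{lem:Shiga}) and replace it, by induction on the skeleta of $Y$, with an $O(p)$-Lipschitz map in the same homotopy class: on $Y^{(2)}$ that class has an evident $O(p)$-Lipschitz model built from degree-$p^2$ self-maps of $2$-spheres, and extending the current $O(p)$-Lipschitz map over a $(k+1)$-cell --- where $k<\dim Y$, so (iv) applies --- amounts to filling by an $O(p)$-Lipschitz disk the $O(p)$-Lipschitz, nullhomotopic composite of that map with the attaching sphere.

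For $\text{(ii)}\Rightarrow\text{(i)}$ I would run a scaling-limit argument. For a closed indecomposable $v\in U_0$ of degree $k$, $m_Y(v)$ is a closed form representing $[v]\in H^k(Y)$, so by Proposition~\ref{pullback-bound} the closed flat form $g_p^*m_Y(v)$ has $L^\infty$-norm $O(p^k)$ and represents $p^k[v]$; hence $p^{-k}g_p^*m_Y(v)$ is a bounded family of closed forms representing $[v]$. Passing (Banach--Alaoglu, on the finitely many generators of degree $\le\dim Y$) to a subsequence of $p$ along which all these weak$^\flat$-converge produces closed limit forms $i(v)$ still representing $[v]$ (Lemma~\ref{lem:dga}), and extending multiplicatively defines a DGA map $\bigwedge U_0\to\Omega_\flat^*(Y)$ into closed forms which annihilates each relation $P=du$ among the $U_0$-generators, since $p^{-\deg P}g_p^*m_Y(P)=p^{-\deg P}d\bigl(g_p^*m_Y(u)\bigr)$ while $p^{-\deg P}\lVert g_p^*m_Y(u)\rVert_\infty=O(1/p)\to0$; as $Y$ is formal, $\bigwedge U_0\to H^*(Y;\mathbb R)$ is surjective, so this descends to the section $i$ of (i). Finally, for rational invariance it suffices to transfer (ii) across $Y\simeq_{\mathbb Q}Y'$: both being formal with isomorphic rational minimal models, a Sullivan-style obstruction argument (of the kind behind Theorem~\ref{lem:Shiga}) gives fixed genuine maps $u\colon Y'\to Y$, $v\colon Y\to Y'$ with $u^*v^*=\rho_N$ on $H^*(Y';\mathbb R)$ for some fixed $N$; then for a scaling self-map $s_p$ of $Y$ the composite $vs_pu$ is $O(p)$-Lipschitz and induces $u^*\rho_p v^*=\rho_p u^*v^*=\rho_{pN}$ on $H^*(Y';\mathbb R)$ (grading automorphisms commuting with every degree-preserving map), while $\{pN\}$ remains logarithmically dense.

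The step I expect to be the main obstacle is the quantitative organization of $\text{(ii)}\Rightarrow\text{(iii)}$: one must run the combing so that at each of the $\log L$ scales the relative shadowing principle is invoked with a formal homotopy whose dilatation is bounded by the \emph{current} scale $2^j$ and not by $L$, and so that the successive homotopies have sizes forming a geometric series summing to $O(L)$ rather than $\log L$ terms each of size $O(L)$; controlling the $U$-dilatation (rather than the dilatation) along the induction is what should make the estimates telescope. A secondary delicate point is the compatible choice of cell extensions in $\text{(iv)}\Rightarrow\text{(ii)}$, needed to keep the homotopy class fixed while retaining the linear bound.
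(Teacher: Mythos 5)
Your overall architecture coincides with the paper's, and your (ii)$\Rightarrow$(i) scaling-limit argument is a correct and slightly cleaner variant: where the paper rescales the whole minimal model via a Halperin--Stasheff bigrading $\{W_\ell\}$ and shows the weak$^\flat$ limit annihilates $\bigoplus_{\ell\geq 1}W_\ell$, you take limits only on the closed indecomposables $U_0$, extend multiplicatively (using that weak$^\flat$ limits commute with $\wedge$ and $d$), and kill each relation $P=du$ by the $O(1/p)$ decay of the rescaled primitive. Both work. The implications (i)$\Rightarrow$(ii), (iii)$\Rightarrow$(iv), rational invariance, and the combing scheme for (ii)$\Rightarrow$(iii) all match the paper in substance: the paper realizes your ``combed at scale $p^k$'' maps as shadows of $f^*m_Y\rho_{p^{-k}}$ and organizes the homotopies exactly as your geometric series requires, using the formal homotopies recorded by the shadowing principle. (One small imprecision: you should \emph{construct} $h_Y$ by lifting $m_Y$ through $i$ so that $ih_Y\simeq m_Y$; this is not automatic for an arbitrary quasi-isomorphism to $H^*(Y;\mathbb R)$.)

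The genuine gap is in (iv)$\Rightarrow$(ii). Filling each $(k+1)$-cell by an $O(p)$-Lipschitz nullhomotopy of the attaching composite produces \emph{an} extension, but it may differ from $f_p$ on that cell by an arbitrary element of $\pi_{k+1}(Y)$; this error can change the induced map on $H^{k+1}$ and in any case changes the homotopy class, so the ``compatible choice of cell extensions'' you flag as a secondary point is in fact the crux, and you offer no mechanism for it. The paper's mechanism (Lemma \ref{strongShadows} and the final section) is: carry along a formal homotopy from the partial map to a reference homomorphism with controlled $U$-dilatation, use Proposition \ref{qLift} to bound the component of the obstruction cochain in $V_{k+1}\cap U_i$ by $O(L^{k+1+i})$, and then invoke Gromov's distortion conjecture \emph{for the skeleton} $Y^{(k+1)}$ (Theorem \ref{props}(d) together with Lemma \ref{ref-dist}) to realize the correction class by an $O(L)$-Lipschitz sphere that is glued into the extension. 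The distortion statement for the skeleta is itself only available because the skeleta satisfy (ii) --- which is what is being proved --- forcing the simultaneous induction $(a_n)\Rightarrow(b_n)$. Without this, the correction elements in $\pi_{k+1}(Y)$ may require super-linear Lipschitz constant to represent, and your skeletal induction loses the linear bound.
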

\begin{proof}
  We start by proving the equivalence of (i) and (ii), followed by rational invariance; the statements on homotopies are the most involved and are deferred to the end.

  \subsubsection*{(i) $\Rightarrow$ (ii)}
  We start by showing:
  \begin{lem}
    A space satisfying (i) is formal.
  \end{lem}
  \begin{proof}
    The homomorphism $i:H^*(Y; \mathbb R) \to \Omega_\flat^*Y$ guaranteed by (i) is a quasi-isomorphism.  Let $m_Y':\mathcal M_Y^* \to \Omega_\flat^*Y$ be a minimal model.  Then by repeated applications of Proposition \ref{extExist}, we get a quasi-isomorphism $h_Y:\mathcal M_Y^* \to H^*(Y;\mathbb R)$ such that $i \circ h_Y \simeq m_Y$.  This shows that $h_Y$ is a quasi-isomorphism of DGAs over the reals, in other words $\mathcal M_Y^*$ is formal.
  \end{proof}
  The map $i \circ q$ constructed in the lemma is a minimal model $m_Y=i \circ h_Y:\mathcal{M}^*_Y \to \Omega^*Y$ which sends all homologically trivial elements to 0.

  Let $\rho_t:H^*(Y) \to H^*(Y)$ be the grading automorphism which multiplies $H^k$ by $t^k$.  Then by Theorem \ref{lem:Shiga}, there is some $t>1$ such that $\rho_t \circ h_Y$ is realized by a genuine map $f:Y \to Y$.  For every $N$, $f^N$ is in the rational homotopy class of the map $i\rho_{t^N}h_Y:\mathcal{M}_Y^* \to \Omega^*Y$, whose dilatation is $O(t)$; therefore, by the shadowing principle \ref{shadow}, we can build an $O(t^N)$-Lipschitz map in this homotopy class.  Therefore such maps are at least logarithmically dense.

  \subsubsection*{(ii) $\Rightarrow$ (i)}
  Suppose that there is an infinite sequence of $p \in \mathbb{N}$ and $C(Y)(p+1)$-Lipschitz maps $r_p$ as given.  Let $m_Y:\mathcal{M}_Y^* \to \Omega^*Y$ be a minimal model, and
  \[\mathcal{M}_Y^*=\bigwedge_{\ell=1}^\infty W_\ell\]
  a bigrading as described in \S\ref{S:formal}.  There is an automorphism $r_p$ of $\mathcal{M}_Y^*$ extending the grading automorphism on $H^*(Y)$ which sends $a \in W_\ell$ to $p^{\deg a+\ell}a$.

  Now for each $p$ the map $\ph_p=r_p^*m_Y\rho_{1/p}:\mathcal{M}_Y^* \to \Omega_\flat^*(Y)$ sends
  \[w \mapsto \frac{1}{p^{\ell+\deg w}}r_p^*m_Yw, \qquad w \in W_\ell.\]
  This sequence of maps is uniformly bounded, and therefore has a subsequence
  which weak$^\flat$-converges to some $\ph_\infty$.
  \begin{lem}
    For indecomposables $w$, $\ph_\infty(w)=0$ if and only if
    $w \in \bigoplus_{\ell=1}^\infty W_\ell$.
  \end{lem}
  \begin{proof}
    If $w \in \bigoplus_{\ell=1}^\infty W_\ell$, then its image is zero since $\lVert r_p^*m_Yw \rVert_\infty \leq [C(Y)(p+1)]^{\deg w}$.  On the other hand, if $w \in W_0$, then it is cohomologically nontrivial, and thus there is a flat cycle $A$ and a $C_A>0$ such that $\int_A r_p^*m_Yw=C_Ap^{\deg w}$ for every $p$.  Thus $\int_A \ph_\infty(w)=C_A$ and so $\ph_\infty(w) \neq 0$.
  \end{proof}
  Now, if an element $w \in \bigwedge W_0$ is zero in $H^*(Y;\mathbb{R})$, then it is the differential of some element of $W_1$ and therefore again $\ph_\infty(w)=0$.  Thus $\ph_\infty:\mathcal{M}_Y^* \to \Omega_\flat^*(Y)$ factors through $H^*(Y;\mathbb{R})$, showing (i).

  \subsubsection*{Rational homotopy invariance of (ii)}
  Suppose that $Y$ has property (ii) and $Z$ is a rationally equivalent finite complex.  By \cite[Theorem B]{PWSM}, there are maps $Z \xrightarrow{f} Y \xrightarrow{g} Z$ inducing rational homotopy equivalences such that $g \circ f$ induces the automorphism $\rho_q$ for some $q$.  Then we can get a sequence of maps verifying (ii) for $Z$ by composing
  \[Z \xrightarrow{f} Y \xrightarrow{r_p} Y \xrightarrow{g} Z\]
  for each $p$ in the sequence verifying (ii) for $Y$.

  \subsubsection*{(iii) $\Rightarrow$ (iv)}
  This is clear.

  \subsubsection*{(iv) $\Rightarrow$ (ii)}
  Suppose that $Y$ is formal and admits linear nullhomotopies of maps from $S^n$.  Theorem \ref{lem:Shiga} gives a way of realizing the grading automorphism $\rho_t$ of $Y$ by a map $r_t:Y \to Y$ for some infinite, logarithmically dense sequence of $t$, but without geometric constraints.  It thus remains to construct homotopic maps with Lipschitz constant $O(L)$.  We defer the details to the next section as they require some additional technical machinery from \cite{PCDF}.

  In fact, our construction will give a more general result, which may be thought of as a strengthening of the shadowing principle for scalable spaces:
  \begin{lem} \label{strongShadows}
    Suppose $Y$ admits linear nullhomotopies of maps from $S^k$, $k \leq n-1$.  Let $X$ be an $n$-dimensional simplicial complex, and let $\ph:\mathcal{M}^*_Y \to \Omega^*_\flat(X)$ be a homomorphism which satisfies
    \[\Dil^U(\ph) \leq L,\]
    and which is formally homotopic to $f^*m_Y$ for some $f:X \to Y$.  Then there is a $g:X \to Y$ which is $C(n,Y)(L+1)$-Lipschitz and homotopic to $f$, where $C(n,Y)$ depends on the choices of norms on $V_k$.
  \end{lem}
  As a special case, in combination with Theorem \ref{lem:Shiga}, we see that such a $Y$ satisfies (ii).  Formally, this lemma also implies Gromov's distortion conjecture for $Y$, Theorem \ref{props}(\ref{B:dist}).  In fact, though, we will prove this separately and use it in the proof.

  \subsubsection*{(ii) $\Rightarrow$ (iii)}
  Let $X$ be a finite simplicial complex and $f:X \to Y$ a nullhomotopic $L$-Lipschitz map.  Choose a natural number $t>1$ such that there is an map $r_t:Y \to Y$ which induces the grading automorphism $\rho_t$ on cohomology.

  We will define a nullhomotopy of $f$ by homotoping through a series of maps which are more and more ``locally organized''.  Specifically, for $1 \leq k \leq s=\lceil\log_pL\rceil$, we build a $C(X,Y)(L/p^k+1)$-Lipschitz map $f_k:X \to Y$ by applying the shadowing principle \ref{shadow} to the map
  \[f^*m_Y\rho_{p^{-k}}:\mathcal{M}_Y^* \to \Omega^*X.\]
  We will build a nullhomotopy of $f$ through the sequence of maps
  \[\xymatrix{
    f \ar@{-}[r] & r_p \circ f_1 \ar@{-}[rd] & r_{p^2} \circ f_2 \ar@{-}[rd] &
    \ldots \ar@{-}[rd] & r_{p^s} \circ f_s \ar@{-}[r] & \text{const}. \\
    && r_p \circ r_p \circ f_2 \ar@{-}[u] & \ldots &
    r_{p^{s-1}} \circ r_p \circ f_s \ar@{-}[u]
  }\]
  As we go right, the \emph{length} (Lipschitz constant in the time direction) of the $k$th intermediate homotopy increases---it is $O(p^k)$---while the \emph{thickness} (Lipschitz constant in the space direction) remains $O(L)$. Thus all together, these homotopies can be glued into an $O(L)$-Lipschitz nullhomotopy of $f$.

  Informally, the intermediate maps $r_{p^k} \circ f_k$ look at scale $p^k/L$ like thickness-$p^k$ ``bundles'' or ``cables'' of identical standard maps at scale $1/L$.  This structure makes them essentially as easy to nullhomotope as $L/p^k$-Lipschitz maps.

  We now build the aforementioned homotopies:
  \begin{lem} \label{lem:rp}
    There is an $O(p^k)$-Lipschitz homotopy $F_k:Y \times [0,1] \to Y$ between $r_{p^k}$ and $r_{p^{k-1}} \circ r_p$.
  \end{lem}
  \begin{lem} \label{lem:fk}
    There is a thickness-$O(L/p^k)$, constant length homotopy $G_k:X \times [0,1] \to Y$ between $f_k$ and $r_p \circ f_{k+1}$.
  \end{lem}
  This induces homotopies of thickness $O(L)$ and length $O(p^k)$:
  \begin{itemize}
  \item $F_k \circ (f_k \times \id)$ from $r_{p^{k-1}} \circ r_p \circ f_k$ to $r_{p^k} \circ f_k$;
  \item $r_{p^k} \circ G_k$ from $r_{p^k} \circ f_k$ to $r_{p^k} \circ r_p \circ f_{k+1}$.
  \end{itemize}
  Finally, the map $f_s$ is $C(X,Y)$-Lipschitz and therefore has a short homotopy to one of a finite set of nullhomotopic simplicial maps $X \to Y$.  For each map in this finite set, we can pick a fixed nullhomotopy, giving a constant bound for the Lipschitz constant of a nullhomotopy of $f_s$ and therefore a linear one for $r_{p^s} \circ f_s$.

  Adding up the lengths of all these homotopies gives a geometric series which sums to $O(L)$, completing the proof of the theorem modulo the two lemmas above.
\end{proof}
\begin{proof}[Proof of Lemma \ref{lem:rp}.]
  We use the fact that the maps $r_{p^i}$ were built using the shadowing principle.  Thus, there are formal homotopies $\Phi_i$ of length $C(X,Y)$ between $m_Y\rho_{p^i}$ and $r_{p^i}^*m_Y$.  This allows us to construct the following formal homotopies:
  \begin{itemize}
  \item $\Phi_k$, time-reversed, between $r_{p^k}^*m_Y$ and $m_Y\rho_{p^k}$, of length $C(X,Y)$;
  \item $\Phi_1\rho_{p^{k-1}}$ between $m_Y\rho_{p^k}$ and $r_p^*m_Y\rho_{p^{k-1}}$, of length $C(X,Y)p^{k-1}$;
  \item and $(r_{p^{k-1}}^* \otimes \id)\Phi_{k-1}$ between $r_p^*m_Y\rho_{p^{k-1}}$ and $r_p^*r_{p^{k-1}}^*m_Y$, of length $C(X,Y)$.
  \end{itemize}
  Concatenating these three homotopies and applying the relative shadowing principle \ref{relshadow} to the resulting map $\mathcal{M}^*_Y \to \Omega^*(Y \times [0,1])$ rel ends, we get a linear thickness homotopy of length $O(p^{k-1})$ between the two maps.
\end{proof}
\begin{proof}[Proof of Lemma \ref{lem:fk}.]
  We use the fact that the maps $f_k$ and $f_{k+1}$ were built using the shadowing principle.  Thus there are formal homotopies $\Psi_i$ of length $C(X,Y)$ between $f^*m_Y\rho_{p^{-i}}$ and $f_i$.  This allows us to construct the following formal homotopies:
  \begin{itemize}
  \item $\Psi_k$, time-reversed, between $f_k$ and $f^*m_Y\rho_{p^{-k}}$, of length $C(X,Y)$;
  \item $\Psi_{k+1}\rho_p$ between $f^*m_Y\rho_{p^{-k}}$ and $f_{k+1}^*m_Y\rho_p$, of length $C(X,Y)p$;
  \item and $(f_{k+1}^* \otimes \id)\Phi_1$ between $f_{k+1}^*m_Y\rho_p$ and $r_p^*f_{k+1}^*m_Y$, of length $C(X,Y)$.
  \end{itemize}
  Concatenating these three homotopies and applying the relative shadowing principle \ref{relshadow} to the resulting map $\mathcal{M}^*_Y \to \Omega^*(X \times [0,1])$ rel ends, we get a linear thickness homotopy of length $O(p)$ between the two maps.
\end{proof}

\section{Proof of Theorem \ref{props}} \label{S:B}
Now we prove Theorem \ref{props}, which we again restate:
\begin{thm*}[Properties of scalable spaces] \ 
  \begin{enumerate}[(a)]
  \item Scalability is invariant under rational homotopy equivalence.
  \item The class of scalable spaces is closed under products and wedge sums.
  \item All $n$-skeleta of scalable complexes are scalable, for $n \geq 2$.
  \item Scalable spaces satisfy Gromov's distortion conjecture.  That is, if $Y$ is scalable, all elements $\alpha \in \pi_k(Y) \cap \Lambda_\ell$ outside $\Lambda_{\ell+1}$ have distortion $\Theta(L^{k+\ell})$.
  \end{enumerate}
\end{thm*}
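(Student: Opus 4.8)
The plan is to handle (a)--(c) with tools already in place and to reserve the real work for (d).

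Part (a) needs nothing new: in \S\ref{S5} we already saw that condition (ii) of Theorem \ref{tfae}, and hence scalability, is a rational homotopy invariant. For (b) I would argue with characterization (i). Given scalable $Y_1,Y_2$ with DGA homomorphisms $i_j\colon H^*(Y_j;\mathbb{R})\to\Omega_\flat^*(Y_j)$ lifting cohomology, define on the Künneth factorization $H^*(Y_1;\mathbb{R})\otimes H^*(Y_2;\mathbb{R})$ the map $\alpha\otimes\beta\mapsto\pi_1^*i_1(\alpha)\wedge\pi_2^*i_2(\beta)$ with $\pi_j$ the (Lipschitz) projections; pullback preserves flatness by Lemma \ref{flatLip}, the Koszul signs agree, and the resulting closed flat form represents $\alpha\otimes\beta$. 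For the wedge, the key point is that a form of positive degree pulls back to zero under a constant map, so if $c_j\colon Y_1\vee Y_2\to Y_j$ is the collapse then $c_j^*i_j(\alpha)$ restricts to $0$ on $Y_{3-j}$; hence for $\alpha\in H^{\ge1}(Y_1;\mathbb{R})$ and $\beta\in H^{\ge1}(Y_2;\mathbb{R})$ the forms $c_1^*i_1(\alpha)$ and $c_2^*i_2(\beta)$ vanish on one another's supports and multiply to $0$, matching the vanishing cup product. Thus $\alpha\mapsto c_1^*i_1(\alpha)$, $\beta\mapsto c_2^*i_2(\beta)$, $1\mapsto 1$ glue to a homomorphism $H^*(Y_1\vee Y_2;\mathbb{R})\to\Omega_\flat^*(Y_1\vee Y_2)$ lifting cohomology.

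For (c), take a scalable $Y$ with $i\colon H^*(Y;\mathbb{R})\to\Omega_\flat^*Y$ as in (i) and let $n\ge 2$, so $Y^{(n)}$ is again a simply connected finite complex. Composing $i$ with restriction of forms along $Y^{(n)}\hookrightarrow Y$ (a DGA homomorphism, by Lemma \ref{flatLip}) lifts the subring $\img\bigl(H^*(Y;\mathbb{R})\to H^*(Y^{(n)};\mathbb{R})\bigr)$; this is well defined because a form of degree $>n$ restricts to $0$ on the $n$-complex $Y^{(n)}$, so the chosen lift of a class back to $Y$ is irrelevant. The remaining classes of $H^*(Y^{(n)};\mathbb{R})$ form a complementary subspace concentrated in degree $n$, they are indecomposable, and any product of one of them with a positive-degree class vanishes in $H^*(Y^{(n)};\mathbb{R})$ while the corresponding product of forms has degree $>n=\dim Y^{(n)}$ and so vanishes automatically; choosing arbitrary closed flat representatives for these and extending linearly gives the desired homomorphism, so $Y^{(n)}$ satisfies (i).

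Part (d) is the substance. The upper bound $O(L^{k+\ell})$ is Proposition \ref{dist<} and holds for any simply connected $Y$, so the task is, for scalable $Y$ and $\alpha\in\pi_k(Y)\cap(\Lambda_\ell\setminus\Lambda_{\ell+1})$, to produce $O(N^{1/(k+\ell)})$-Lipschitz representatives of $N\alpha$ for every $N$. Fix any genuine map $f_N\colon S^k\to Y$ representing $N\alpha$, let $\widehat{f_N}\colon\mathcal{M}_Y^*\to\mathcal{M}_{S^k}^*$ be a lift of $f_N^*m_Y$ along the quasi-isomorphism $m_{S^k}$ (Proposition \ref{extExist}), and set $\ph_N:=m_{S^k}\circ\widehat{f_N}$, which is formally homotopic to $f_N^*m_Y$ by construction. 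Now $\mathcal{M}_{S^k}^*$ has no indecomposable generators in degrees $2,\dots,k-1$, and its degree-$k$ part is spanned by a single generator $x_k$; hence $\ph_N$ vanishes on $V_j$ for $2\le j<k$, and on $V_k$ it is $v\mapsto\langle v,N\alpha\rangle\,m_{S^k}(x_k)=N\langle v,\alpha\rangle\,m_{S^k}(x_k)$, since --- as recalled in \S\ref{S:pi_n} --- the restriction of $\widehat{f_N}$ to $V_k$ is precisely the pairing of $V_k$ with the rational homotopy class $N\alpha$. Because $\alpha\in\Lambda_\ell\setminus\Lambda_{\ell+1}$ means $\alpha$ annihilates $V_k\cap U_{\ell-1}$ but pairs nontrivially with some element of $V_k\cap U_\ell$, we get $\lVert\ph_N|_{V_k\cap U_i}\rVert_{\mathrm{op}}=0$ for $i<\ell$ and of order $N$ for $\ell\le i\le k-1$; since every degree-$k$ indecomposable of $\mathcal{M}_Y^*$ has depth $\le k-2$ (by minimality and the absence of degree-$1$ generators, $V_1=0$), this exhausts the maximum defining $\Dil^U$, so $\Dil^U(\ph_N)\asymp N^{1/(k+\ell)}$ once $N$ is large. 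As scalability implies condition (iii) of Theorem \ref{tfae}, $Y$ admits linear nullhomotopies of maps from spheres of every dimension, so Lemma \ref{strongShadows} applies to $\ph_N$ and produces a map $S^k\to Y$ homotopic to $f_N$ --- hence representing $N\alpha$ --- with Lipschitz constant $O(\Dil^U(\ph_N)+1)=O(N^{1/(k+\ell)})$. Therefore $\delta_\alpha(L)=\Omega(L^{k+\ell})$, which together with Proposition \ref{dist<} gives distortion $\Theta(L^{k+\ell})$.

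The main obstacle is in (d): one has to replace the geometric algebraization $f_N^*m_Y$, whose formal dilatation is only $O(N^{1/k})$, by the formally homotopic but ``thinner'' map $\ph_N$, and then verify that $\Dil^U(\ph_N)$ has order $N^{1/(k+\ell)}$. This estimate rests on two facts working together: $\mathcal{M}_{S^k}^*$ is trivial below degree $k$, so $\ph_N$ is supported only on degree-$k$ indecomposables; and $\alpha\in\Lambda_\ell$ forces every degree-$k$ indecomposable pairing nontrivially with $\alpha$ to have depth $\ge\ell$, which is exactly what turns the exponent $\tfrac1k$ into $\tfrac1{k+\ell}$. Once the estimate is in hand, converting it to a genuine Lipschitz bound is precisely what Lemma \ref{strongShadows}, the strong shadowing principle for scalable spaces, is built to do. (When $k$ is even one also carries along the extra generator of $\mathcal{M}_{S^k}^*$ in degree $2k-1$, but it lies outside the range $2\le n\le\dim S^k$ relevant to $\Dil^U$ and contributes nothing; and when $\ell=0$ the argument degenerates to the classical degree bound, since $f_N$ is then already $O(N^{1/k})$-Lipschitz.)
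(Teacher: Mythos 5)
Parts (a)--(c) of your proposal are sound. For (a) you and the paper both simply cite the rational invariance of condition (ii) established in \S\ref{S5}. For (c) your argument is essentially the paper's: restrict the forms of (i) to $Y^{(n)}$ and pad out $H^n$ with arbitrary closed representatives, whose products vanish for degree reasons. For (b) you work with condition (i) (K\"unneth representatives for products, collapse-map pullbacks for wedges), whereas the paper works with condition (ii) and just takes products and wedges of the scaling self-maps; your route is longer but correct, and it has the mild virtue of not passing through the equivalence (i) $\Leftrightarrow$ (ii).

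Part (d), however, has a genuine logical gap: it is circular within the architecture of the paper. Your lower bound invokes Lemma \ref{strongShadows} (via ``scalability implies (iii), hence linear nullhomotopies from spheres''), but the proof of Lemma \ref{strongShadows} itself \emph{uses} Theorem \ref{props}(d): the inductive step extends $g_{k+1}$ over a $(k{+}1)$-cell by appealing to the distortion conjecture for $\pi_{k+1}(Y^{(k)})$ (through Lemma \ref{ref-dist}), which is supplied exactly by part (d) applied to the skeleton. If one tried to prove that instance of (d) by your method, one would need Lemma \ref{strongShadows} for maps $S^{k+1}\to Y^{(k)}$, whose proof in turn needs the distortion conjecture for $\pi_{k+1}(Y^{(k)})$ --- the induction does not terminate. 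The paper is explicit about this: it notes that Lemma \ref{strongShadows} ``formally implies'' the distortion conjecture but that (d) must be ``proved separately and used in the proof'' of that lemma. The non-circular argument is much more direct and uses only condition (ii): fix one representative $f$ of $\alpha\in\pi_k(Y)\cap(\Lambda_\ell\setminus\Lambda_{\ell+1})$ and compose it with the scaling self-maps $r_p$. Since $r_p$ lifts the grading automorphism and sends $W_i\cap\mathcal{M}_Y^j$ to $p^{i+j}$ times itself modulo $U_{i-1}$, the composite $r_p\circ f$ is an $O(p)$-Lipschitz representative of $p^{k+\ell}\alpha$; logarithmic density of the available $p$ handles the remaining multiples, and the matching upper bound is Proposition \ref{dist<}. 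Your computation that $\Dil^U(\ph_N)\asymp N^{1/(k+\ell)}$ for the minimal-model factorization is correct and is essentially the content of Proposition \ref{Udil-for-M} combined with Theorem \ref{MM-only}, but it can only be deployed as a proof of (d) \emph{after} (d) has been established by the direct route.
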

In fact, we will show this for spaces that satisfy (i) and (ii) of Theorem \ref{tfae}.  Thus it doesn't matter that we are not done proving that (iii) and (iv) are equivalent to (i) and (ii), and indeed we will use part (d) in the proof of Lemma \ref{strongShadows}.

We already showed (a) in the previous section.  For part (b), if (ii) holds for spaces $X$ and $Y$, then we can take the product and wedge sum of the respective scaling maps to get scaling maps of $X \times Y$ and $X \vee Y$.

To show part (c), we use (i).  Let $Y$ be a complex satisfying (i), and let $n \geq 2$.  The inclusion $i:Y^{(n)} \hookrightarrow Y$ induces isomorphisms for $H^k$ for $k<n$ and an injection on $H^n$.  Then the homomorphism $H^*(Y;\mathbb R) \to \Omega^*_\flat(Y)$ composed with the restriction to forms on $Y^{(n)}$ gives a homomorphism from $i^*H^*(Y;\mathbb R) \subseteq H^*(Y^{(n)}; \mathbb R)$ to $\Omega^*_\flat(Y^{(n)})$.  To extend to the rest of $H^n(Y^{(n)}; \mathbb R)$, we can choose any $n$-forms representing a basis for a complementary subspace and extend by linearity.  Since these forms are top-dimensional, their wedge product with any other form is zero, as desired.

Part (d) is a mild generalization of \cite[Theorem 5--4]{PCDF} showing that symmetric spaces satisfy Gromov's distortion conjecture, whose proof already uses the fact that they satisfy (i).  Suppose first that $\alpha \in \pi_k(Y)$ is contained in $\Lambda_\ell$.  We will show that its distortion is $\Omega(L^{k+\ell})$.  Let $f:S^k \to Y$ be a representative of $\alpha$, and let $r_p$ be maps realizing (ii) for any $p$ for which they exist.  Then $r_pf$ is an $O(L)$-Lipschitz representative of $q^{k+\ell}\alpha$.  Such a map $r_p$ exists for at least a logarithmically dense set of integers $p$, so all other multiples can also be represented with a similar Lipschitz constant.
  
The other side of the inequality follows immediately from Proposition \ref{dist<}.

\section{Maps to scalable spaces}
The purpose of this section is to prove Lemma \ref{strongShadows}, which we restate here:
\begin{lem*}
  Suppose $Y$ admits linear nullhomotopies of maps from $S^k$, $k \leq n-1$.  Let $X$ be an $n$-dimensional simplicial complex, and let $\ph:\mathcal{M}^*_Y \to \Omega^*_\flat(X)$ be a homomorphism which satisfies
  \[\Dil^U(\ph) \leq L,\]
  and which is DGA homotopic to $f^*m_Y$ for some $f:X \to Y$.  Then there is a map $g:X \to Y$ which is $C(n,Y)(L+1)$-Lipschitz and homotopic to $f$, where the constant $C(n,Y)$ depends on the choices of norms on $V_k$.
\end{lem*}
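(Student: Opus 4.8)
The plan is to construct $g$ by induction over the skeleta of $X$, carrying along at each stage not merely a genuine map but also a formal homotopy from $\ph$ to the corresponding restriction of $g^*m_Y$ whose $U$-dilatation is kept $O(L{+}1)$. Tracking the $U$-dilatation rather than the ordinary dilatation is essential: the plain shadowing principle \ref{shadow} applied to $\ph$ would only give a map that is $O(\Dil(\ph){+}1)$-Lipschitz, and $\Dil(\ph)$ may be far larger than $\Dil^U(\ph)=L$. Because a top-dimensional cell of $X$ cannot be handled by a lower-dimensional instance of the statement, this is really a double induction: on $n=\dim X$, and within each $n$ on the skeleton. After replacing $X$ by a CW complex of its homotopy type, with the resulting metric distortion absorbed into $C(n,Y)$, I may assume $X$ has a single vertex and no $1$-cells.

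\emph{Inductive hypothesis at level $j$.} There is a genuine $C(L{+}1)$-Lipschitz map $g_j\colon X^{(j)}\to Y$, homotopic to $f|_{X^{(j)}}$, together with a formal homotopy $\Phi_j\colon\mathcal M_Y^*\to\Omega^*_\flat(X^{(j)}\times[0,1])$ from $\ph|_{X^{(j)}}$ to $g_j^*m_Y$ with $\Dil^U_1(\Phi_j)\le C(L{+}1)$. This is trivial for $j=1$. For the step $j\to j{+}1$ I treat each $(j{+}1)$-cell $e$ separately. Its attaching map $a_e\colon S^j\to X^{(j)}$ may be taken $O(1)$-Lipschitz, so that $a_e^*\ph$ extends along the characteristic map to $\chi_e^*\ph\colon\mathcal M_Y^*\to\Omega^*_\flat(D^{j+1})$ with $\Dil^U(\chi_e^*\ph)\le L$, while $a_e^*\Phi_j$ is a formal homotopy from $a_e^*\ph$ to $(g_ja_e)^*m_Y$ of controlled $U$-dilatation. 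The composite $g_ja_e\colon S^j\to Y$ is $O(L)$-Lipschitz and nullhomotopic, since $f\circ\chi_e$ nullhomotopes $fa_e$ and $g_j\simeq f|_{X^{(j)}}$; as $j\le n-1$, the hypothesis that $Y$ admits linear nullhomotopies of maps from $S^j$ produces an $O(L)$-Lipschitz nullhomotopy of $g_ja_e$, i.e.\ an $O(L)$-Lipschitz extension $G^0_e\colon D^{j+1}\to Y$ of $g_ja_e$.

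It remains to make $G^0_e$ compatible with $\chi_e^*\ph$ and to extend the formal homotopy across $e$. Feeding $a_e^*\Phi_j$, $\chi_e^*\ph$ and $(G^0_e)^*m_Y$ into the quantitative obstruction theory of Propositions~\ref{extExist} and~\ref{qLift} and climbing the tower of indecomposables of $\mathcal M_Y^*$, the obstruction to producing a formal homotopy on $D^{j+1}$ between $\chi_e^*\ph$ and $(G^0_e)^*m_Y$ extending $a_e^*\Phi_j$ vanishes except in one degree, where it is a pairing against a finite-dimensional space of indecomposables whose restriction to depth $\ell$ has size $O(L^{\,j+1+\ell})$, by the bound on $\Dil^U(\ph)$. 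This obstruction is the difference, relative to the formal homotopy already built, between the rational homotopy class of $G^0_e$ rel $\partial D^{j+1}$ and the one prescribed by $\chi_e^*\ph$; I kill it by altering $G^0_e$ by an element $\beta_e\in\pi_{j+1}(Y)$ whose depth-$\ell$ pairings are of size $O(L^{\,j+1+\ell})$, which therefore admits an $O(L)$-Lipschitz representative: by a lower-dimensional instance of the present lemma when $j{+}1<n$, and by the case of a sphere when $j{+}1=n$, discussed below. Gluing the corrected fillings $G_e$ to $g_j$ yields $g_{j+1}$, still $C(L{+}1)$-Lipschitz; the formal homotopies assemble, by Proposition~\ref{qLift}, into $\Phi_{j+1}$ with $\Dil^U_1(\Phi_{j+1})\le C(L{+}1)$; and a final correction of the cell maps by torsion classes of $\pi_{j+1}(Y)$, whose representatives are $O(1)$-Lipschitz and so do not affect the bound, arranges $g_{j+1}\simeq f|_{X^{(j+1)}}$. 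Taking $j=n$ completes the induction.

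The crux is the reconciliation in the preceding paragraph: the filling handed to us by the nullhomotopy hypothesis need not be formally compatible with $\ph$, and correcting it forces us to realize certain classes in $\pi_{j+1}(Y)$ — classes that the bound on $\Dil^U(\ph)$ controls only in the graded ``staircase'' fashion built into the definition of $U$-dilatation — by maps of \emph{linear} Lipschitz constant. This is exactly the constructive half of Gromov's distortion conjecture for $Y$ (Theorem~\ref{props}(\ref{B:dist})), and for spheres it is the base of the double induction and cannot be obtained from the cell-matching above. Instead I would establish it directly, in the spirit of the combing argument behind (ii)$\Rightarrow$(iii): the geometrically uncontrolled self-maps of $Y$ realizing the grading automorphisms, supplied by Theorem~\ref{lem:Shiga}, are regularized scale by scale over $1,2,4,\dots,L$ using the linear-nullhomotopy hypothesis in each intermediate step, so that a large multiple of a depth-$\ell$ class in $\pi_n(Y)$ acquires a representative of Lipschitz constant proportional to the $(n{+}\ell)$-th root of its size; feeding this back into the cell-by-cell construction disposes of the top cells of a general $X$, and in particular shows that $Y$ satisfies condition (ii).
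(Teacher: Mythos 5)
Your main induction --- skeleton by skeleton, carrying alongside the genuine map a formal homotopy to $\ph$ whose $U$-dilatation is kept $O(L+1)$, filling each $(j{+}1)$-cell using the linear-nullhomotopy hypothesis on $S^j$, and then correcting the filling by a class in $\pi_{j+1}(Y)$ whose depth-$\ell$ components have coefficients $O(L^{\,j+1+\ell})$ as estimated via Proposition \ref{qLift} --- is exactly the paper's argument, including the internal lemma bounding the correction class.

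The gap is in how you realize those correction classes by $O(L)$-Lipschitz spheres in the top dimension $j+1=n$ (equivalently, the case $X=S^n$, which you correctly identify as the true base of the induction). Your proposed fix --- combing the geometrically uncontrolled self-maps of $Y$ from Theorem \ref{lem:Shiga} ``scale by scale'' --- is circular: the combing argument of (ii)$\Rightarrow$(iii) takes as \emph{input} self-maps of $Y$ that are already $O(p)$-Lipschitz, and producing such maps from the uncontrolled ones is precisely an application of the present lemma to the $n$-dimensional complex $X=Y$; moreover each regularization step would itself require filling $n$-cells of $Y$ and correcting by efficiently represented classes of $\pi_n(Y)$, which is the statement being proved. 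The paper breaks this circle by descending to the \emph{target's} skeleton rather than iterating on the domain: the inductive hypothesis $(a_{n-1})$ applied to $X=Y^{(n-1)}$ (which is formal, being a skeleton of a formal space) shows that $Y^{(n-1)}$ satisfies condition (ii), hence by Theorem \ref{props}(d) the distortion conjecture in every degree, including $\pi_n$; the exact sequence $\pi_n(Y^{(n-1)})\to\pi_n(Y)\to\pi_n(Y,Y^{(n-1)})$ (Lemma \ref{ref-dist}) then transfers this to $\pi_n(Y)$, since the classes needing nontrivially efficient representatives lie deep in the filtration, hence rationally lift to $Y^{(n-1)}$, where composition with that skeleton's scaling maps produces the $O(L)$-Lipschitz representatives. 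Passing to $Y^{(n-1)}$ is the idea your sketch is missing; without it the top-dimensional correction step does not close.
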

Both scalability and Gromov's distortion conjecture follow as corollaries of this lemma.  These should be thought of as instances of a wider principle that the lemma facilitates the construction of maximally efficient maps.  While the original shadowing principle gives a close relationship between the (usual) dilatation of the ``most efficient'' homomorphism $\mathcal{M}_Y^* \to \Omega^*X$ and the best Lipschitz constant of a map $X \to Y$ in a given homotopy class, the homomorphisms involved can be as difficult to construct as the maps.  On the other hand, in light of Proposition \ref{Udil-for-M}, homomorphisms with optimal $U$-dilatation can always be constructed by factoring through maps between minimal models.  This means that for scalable spaces, the Lipschitz norm of a homotopy class can be computed by studying the maps between minimal models which represent it.  Although the set of such maps may be quite complicated in general, computing it is at least a finite obstruction-theoretic problem.  We summarize this as a theorem:
\begin{thm} \label{MM-only}
  Given two simply connected spaces $X$ and $Y$ and a minimal model $m_X:\mathcal M_X^* \to \Omega^*_\flat(X)$, we can define two norms on the set of homotopy classes $\alpha \in [X,Y]$:
  \begin{align*}
    \lvert \alpha \rvert_{\Lip} &= \inf \{\Lip(f) \mid f:X \to Y, [f]=\alpha\} \\
    \lvert \alpha \rvert_{\mathrm{UDil}} &= \inf \{\Dil^U(m_X^*\ph) \mid \ph:\mathcal M_Y^* \to \mathcal M_X^*, m_X^*\ph \simeq f^*m_Y\text{ where }[f]=\alpha\}.
  \end{align*}
  If $Y$ is a scalable space, then there are constants $c,C>0$ such that for every $\alpha \in [X,Y]$,
  \[c\lvert \alpha \rvert_{\mathrm{UDil}} \leq \lvert \alpha \rvert_{\Lip} \leq C(\lvert \alpha \rvert_{\mathrm{UDil}}+1).\]
\end{thm}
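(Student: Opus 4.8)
The plan is to prove the two inequalities separately; only the second uses scalability, and both are largely repackagings of Proposition~\ref{Udil-for-M} and Lemma~\ref{strongShadows}.

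\emph{The inequality $c\lvert\alpha\rvert_{\mathrm{UDil}}\le\lvert\alpha\rvert_{\Lip}$ (no hypothesis on $Y$).} Given $\epsi>0$, choose $f\colon X\to Y$ with $[f]=\alpha$ and $\Lip(f)\le\lvert\alpha\rvert_{\Lip}+\epsi$. Proposition~\ref{Udil-for-M} produces a DGA map $\ph\colon\mathcal M_Y^*\to\mathcal M_X^*$ with $m_X\ph\simeq f^*m_Y$ and $\Dil^U(m_X\ph)\le C_1(X,Y)\Lip(f)$. Since $m_X\ph$ is DGA homotopic to $f^*m_Y$ with $[f]=\alpha$, this $\ph$ is admissible in the infimum defining $\lvert\alpha\rvert_{\mathrm{UDil}}$, so $\lvert\alpha\rvert_{\mathrm{UDil}}\le C_1(X,Y)(\lvert\alpha\rvert_{\Lip}+\epsi)$; letting $\epsi\to0$ gives the claim with $c=C_1(X,Y)^{-1}$. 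En route this shows the infimum defining $\lvert\alpha\rvert_{\mathrm{UDil}}$ is over a nonempty set and is finite.

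\emph{The inequality $\lvert\alpha\rvert_{\Lip}\le C(\lvert\alpha\rvert_{\mathrm{UDil}}+1)$ (here scalability enters).} Let $\ph\colon\mathcal M_Y^*\to\mathcal M_X^*$ be any admissible homomorphism, so $m_X\ph\simeq f^*m_Y$ for some $f$ with $[f]=\alpha$; set $L=\Dil^U(m_X\ph)$ and $n=\dim X$. Regarded as a homomorphism $\mathcal M_Y^*\to\Omega_\flat^*(X)$, the composite $m_X\ph$ is DGA homotopic to $f^*m_Y$ and satisfies $\Dil^U(m_X\ph)\le L$. As $Y$ is scalable it satisfies condition~(iii) of Theorem~\ref{tfae}, hence admits linear nullhomotopies of maps from $S^k$ for every $k$, in particular for $k\le n-1$. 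Lemma~\ref{strongShadows} then yields $g\colon X\to Y$ with $\Lip(g)\le C_2(n,Y)(L+1)$ and $g\simeq f$, so $[g]=\alpha$ and $\lvert\alpha\rvert_{\Lip}\le C_2(n,Y)(L+1)$. Taking the infimum over admissible $\ph$ gives $\lvert\alpha\rvert_{\Lip}\le C_2(\dim X,Y)(\lvert\alpha\rvert_{\mathrm{UDil}}+1)$, so one may take $C=C_2(\dim X,Y)$.

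Finally I would record that $c=C_1(X,Y)^{-1}$ and $C=C_2(\dim X,Y)$ depend only on $X$, $Y$, the fixed model $m_X$, and the chosen norms on the $V_k$, and never on $\alpha$, which is exactly the uniformity asserted. Since the substantive work is already done in Proposition~\ref{Udil-for-M} and Lemma~\ref{strongShadows}, there is little genuine obstacle; the only point demanding care in the assembly is that Lemma~\ref{strongShadows} must be supplied with linear nullhomotopies of maps from spheres $S^k$ for \emph{all} $k<\dim X$, and $\dim X$ may exceed $\dim Y$, so it is genuinely condition~(iii) --- full scalability --- that is used, not the a priori weaker condition~(iv).
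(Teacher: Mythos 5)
Your proof is correct and follows exactly the route the paper intends: the paper presents this theorem as a summary of Proposition~\ref{Udil-for-M} (which gives the lower bound without any hypothesis on $Y$) combined with Lemma~\ref{strongShadows} (which gives the upper bound using scalability), and your assembly of the two, including the observation that one needs linear nullhomotopies from $S^k$ for all $k<\dim X$ and hence condition~(iii) rather than~(iv), is accurate.
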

Note that this is true regardless of the choice of $m_X$, and therefore the norm $\lvert\alpha\rvert_{\mathrm{UDil}}$ essentially only depends on information about minimal models.

We prove Lemma \ref{strongShadows} by induction using the following statements:
\begin{enumerate}[($a_n$)]
\item Lemma \ref{strongShadows} holds through dimension $n$ (we make this more precise during the proof, but in particular it holds for $n$-dimensional $X$).
\item If $Z$ is an $n$-complex which is formal and admits linear nullhomotopies of maps from $S^k$, $k<n$, then it satisfies (ii).
\end{enumerate}
Clearly, ($a_n$) implies ($b_n$).  In particular, since any skeleton of a formal space is formal \cite[Lemma 3.1]{Shiga}, $Y^{(n)}$ satisfies (ii).  Therefore, according to Theorem \ref{props}(d), it also satisfies Gromov's distortion conjecture.  In addition, we need the following easy extension of that result:
\begin{lem} \label{ref-dist}
  Suppose that $Y$ is an $n$-complex and the distortion conjecture holds for $\pi_n(Y^{(n-1)})$.  Then it holds for $\pi_n(Y)$.
\end{lem}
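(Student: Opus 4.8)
The plan is as follows. The upper bound in the distortion conjecture---that every $\alpha\in\pi_n(Y)\cap(\Lambda_\ell\setminus\Lambda_{\ell+1})$ has distortion $O(L^{n+\ell})$---is already known unconditionally by Proposition~\ref{dist<}, so only the matching lower bound requires proof. The case $\ell=0$ needs nothing about $Y^{(n-1)}$: fixing any representative $S^n\to Y$ of $\alpha$ and precomposing with the degree-$q$, $O(q^{1/n})$-Lipschitz self-maps of $S^n$ from Proposition~\ref{SntoSn} gives representatives of $q\alpha$ with Lipschitz constant $O(q^{1/n})$, so the distortion of $\alpha$ is $\Omega(L^n)$. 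Assume from now on that $\ell\geq 1$.

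Write $A=Y^{(n-1)}$ and let $\iota\colon A\hookrightarrow Y$ be the inclusion. The first step is to lift $\alpha$ along $\iota_*\colon\pi_n(A)\to\pi_n(Y)$. Since the pair $(Y,A)$ is $(n-1)$-connected, the relative Hurewicz theorem identifies $\pi_n(Y,A)$ with the free abelian group $H_n(Y,A;\mathbb{Z})$ on the $n$-cells, and by naturality of the Hurewicz map the image of $\alpha$ under $\pi_n(Y)\to\pi_n(Y,A)$ corresponds to the image of the integral Hurewicz class $h(\alpha)\in H_n(Y;\mathbb{Z})$ in $H_n(Y,A;\mathbb{Z})$. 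Now $\alpha\in\Lambda_1$ precisely when $h(\alpha)$ is torsion, in which case it dies in the free group $H_n(Y,A;\mathbb{Z})$; hence $\alpha$ lies in the image of $\iota_*$, say $\alpha=\iota_*\beta$ with $\beta\in\pi_n(A)$.

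The key point is that $\beta$ can be chosen with $\beta\in\Lambda_\ell(A)$. Here one uses that $A$ and $Y$ are formal---skeleta of formal complexes are formal by \cite[Lemma~3.1]{Shiga}---so both admit Halperin--Stasheff bigraded minimal models \cite[\S3]{HaSt}, and that the ring homomorphism $\iota^*\colon H^*(Y;\mathbb{R})\to H^*(A;\mathbb{R})$ lifts to a map of minimal models preserving the bigrading, hence the depth filtration on indecomposables. Dualizing, the induced map on degree-$n$ indecomposables respects the $W$-grading, and a short computation with annihilators yields $\iota_*(\Lambda_k(A))=\Lambda_k(Y)\cap\img\iota_*$ for every $k$; taking $k=\ell$ provides the desired lift. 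I expect this to be the main obstacle: one must state and apply the functoriality of bigraded models for ring maps with some care (and note that, in the setting where this lemma is invoked, $Y$ is indeed a skeleton of a formal complex).

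With $\beta\in\Lambda_\ell(A)$ in hand---say $\beta\in\Lambda_{\ell'}(A)\setminus\Lambda_{\ell'+1}(A)$ with $\ell'\geq\ell$, so that in fact $\ell'=\ell$ because $\iota_*$ respects the filtration and $\alpha=\iota_*\beta\in\Lambda_{\ell'}(Y)$---the conclusion is routine. The hypothesis that the distortion conjecture holds for $\pi_n(A)$ gives, for a logarithmically dense set of $q$, a $C\,q^{1/(n+\ell)}$-Lipschitz representative $S^n\to A$ of $q\beta$; composing with the fixed-Lipschitz inclusion $\iota$ yields a representative of $q\alpha=\iota_*(q\beta)$ of Lipschitz constant $O(q^{1/(n+\ell)})$. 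Therefore the distortion of $\alpha$ in $Y$ is $\Omega(L^{n+\ell})$, which together with Proposition~\ref{dist<} shows it is $\Theta(L^{n+\ell})$, completing the proof.
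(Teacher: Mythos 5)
Your proof follows the same overall route as the paper's: use the exact sequence of the pair $(Y,Y^{(n-1)})$ to split off the elements with nontrivial rational Hurewicz image (which are undistorted), lift the remaining elements to $\pi_n(Y^{(n-1)})$, check that the lift can be chosen with the same depth in the $\Lambda$-filtration, and push efficient representatives forward along the (fixed-Lipschitz) inclusion. The reduction to the lower bound via Proposition~\ref{dist<}, the $\ell=0$ case, the lifting via the relative Hurewicz theorem, and the final composition all match the paper and are correct.

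The one step where you diverge --- and which you rightly flag as the main obstacle --- is the claim that the lift $\beta$ can be chosen in $\Lambda_\ell(Y^{(n-1)})$. The paper derives this from the explicit description (FHT, \S13(d)) of the map $\mathcal{M}_Y^* \to \mathcal{M}_{Y^{(n-1)}}^*$ induced by the \emph{actual} inclusion: it is injective on indecomposables in degrees $\leq n-1$, and the degree-$n$ indecomposables it kills are closed, hence of depth $0$; filtration-strictness, i.e.\ $\img\iota_* \cap \Lambda_k(Y)=\iota_*(\Lambda_k(Y^{(n-1)}))$, follows without any formality hypothesis. Your route via Halperin--Stasheff bigraded models has a gap: the bigrading-preserving lift of the ring map $\iota^*$ whose existence Halperin--Stasheff guarantee is only one among homotopically non-unique lifts of $\iota^*$ (as the paper notes in \S\ref{S:formal}, different lifts of the same cohomology map can induce different homomorphisms on $\pi_n\otimes\mathbb{R}$), so knowing that \emph{some} lift preserves the bigrading does not by itself tell you that the homomorphism induced by the geometric inclusion is filtration-strict. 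In the present situation this is repaired precisely by the FHT description above; but as written the appeal to bigraded functoriality does not close the argument, and it also imports a formality assumption that the lemma does not make (even if it holds where the lemma is applied).
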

\begin{proof}
  This follows from the exact sequence
  \[\cdots \to \pi_n(Y^{(n-1)}) \xrightarrow{i} \pi_n(Y) \xrightarrow{j} \pi_n(Y,Y^{(n-1)}) \to \cdots.\]
  Since $\img j \subseteq H_n(Y)$, all elements of $\pi_n(Y)$ not in $\ker j$ are undistorted.  Conversely, elements in the image of $i$ are at least as distorted as their preimages.  To show that this is consistent with the distortion conjecture, we must analyze the induced map $\mathcal{M}_Y^* \to \mathcal{M}_{Y^{(n-1)}}^*$.  In fact, this map is injective in degrees $\leq n-1$ (and hence preserves the filtration by the $U_j$) and all extra $n$-dimensional generators of $\mathcal{M}_Y^*$ have zero differential; see \cite[\S13(d)]{FHT}.  This completes the proof of the lemma.
\end{proof}
We now proceed with the proof of the inductive step.
\begin{proof}[Proof of Lemma \ref{strongShadows}.]
  The structure of this proof is very similar to the original proof of the shadowing principle in \cite[\S4]{PCDF}.  That is, we pull $f$ to a map with small Lipschitz constant skeleton by skeleton, all the while using $\ph$ as a model to ensure that we don't end up with overly large obstructions at the next stage (as might occur if we pulled in an arbitrary way.)  The biggest difference is that we don't need to subdivide before performing the induction.

  The details follow.  Suppose, as an inductive hypothesis, that we have constructed the following data:
  \begin{itemize}
  \item A map $g_k:X \to Y$, homotopic to $f$, whose restriction to $X^{(k)}$ is $C(k,Y)(L+1)$-Lipschitz.
  \item A homotopy
    $\Phi_k:\mathcal{M}_Y^* \to \Omega_\flat^*(X) \otimes \mathbb{R}(t,dt)$ from $g_k^*m_Y$ to $\ph$ such that
    \[\Dil_1^U((\Phi_k|_{\mathcal{M}_Y^*(k)})|_{X^{(k)}}) \leq C(k,Y)(L+1).\]
  \end{itemize}
  We write $\beta_k=\int_0^1 \Phi_k$; note that for $v \in V_i$,
  \[d\beta_k(v)=\ph(v)-g_k^*m_Y(v)-\textstyle{\int_0^1} \Phi_k(dv)\]
  and $\beta_k(v)|_A=0$.

  We then construct the analogous data one dimension higher, starting with $g_{k+1}$.  Let $b \in C^k(X;\pi_{k+1}(Y))$ be the simplicial cochain obtained by integrating $\beta_k|_{V_{k+1}}$ over $k$-simplices and choosing an element of $\pi_{k+1}(Y)$ whose image in $V_{k+1}$ is as close as possible in norm (but otherwise arbitrary.)  Note that the values of $b$ are not a priori bounded in any way.  We use $b$ to specify a homotopy $H_{k+1}:X \times [0,1] \to Y$ from $g_k$ to a new map $g_{k+1}$.

  We start by setting $H_{k+1}$ to be constant on $X^{(k-1)}$.  On each $k$-simplex $q$, we set $H_{k+1}|_q$ to be a map such that
  \[g_{k+1}|_q=H_{k+1}|_{q \times \{1\}}=H_{k+1}|_{q \times \{0\}}=g_k|_q,\]
  but such that on the cell $q \times [0,1]$, the map traces out the element $\langle b, q \rangle \in \pi_{k+1}(Y)$.  This is well-defined since $H_{k+1}|_{\partial(q \times [0,1])}$ is canonically nullhomotopic by precomposition with a linear contraction of the simplex.
  
  Now, relative homotopy classes of extensions to $p$ of $g_k|_{\partial p}$, where $p$ is a $(k+1)$-simplex, have a free action by $\pi_{k+1}(Y)$; in particular, differences between them can be labeled by elements of $\pi_{k+1}(Y)$ giving the obstruction to homotoping one to the other.  No matter how we extend $H_{k+1}$ over $p \times [0,1]$, this obstruction will be $g_{k+1}|_p-g_k|_p=\langle \delta b, p \rangle$.  We would like to show that we can do so in such a way that $g_{k+1}|_p$ is $C(k+1,Y)(L+1)$-Lipschitz.

  Note first that by assumption we can extend $g_{k+1}|_{\partial p}$ to $p$ via a $C(k+1,Y)(L+1)$-Lipschitz map $u:D^{k+1} \to Y$.  However, this map may be in the wrong homotopy class.  To build the extension we want, we first estimate the size of the obstruction in $\pi_{k+1}(Y)$ to homotoping $u$ to $g_{k+1}|_p$; Lemma \ref{ref-dist} applied to $Y^{(k+1)}$ then implies that this obstruction is represented by a $C(k+1,Y)(L+1)$-Lipschitz
  map $S^{k+1} \to Y$ which we then glue into the original extension to define
  $g_{k+1}|_p$.
  \begin{lem}
    The obstruction above can be written as $\alpha=\sum_i \alpha_i$ where $\alpha_i \in \pi_{k+1}(Y) \cap \Lambda_i$ and its coefficients in terms of a generating set for this subgroup are $O(L^{k+1+i})$.
  \end{lem}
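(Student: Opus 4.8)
The plan is to reduce the statement to a single estimate on how $\alpha$ pairs with the degree $k+1$ indecomposables of $\mathcal M_Y^*$, and then recover the filtered decomposition by a routine lattice argument. Write $\langle\cdot,\cdot\rangle$ for the pairing between $V_{k+1}$ and $\pi_{k+1}(Y)$; I claim it is enough to prove
\[
|\langle v,\alpha\rangle|\le C(k+1,Y)(L+1)^{k+1+i}\qquad\text{for every }v\in V_{k+1}\cap U_i .
\]
Granting this, one builds the $\alpha_i$ by peeling off filtration pieces from the bottom up: since $\Lambda_{i+1}=(V_{k+1}\cap U_i)^{\perp}$ inside $\pi_{k+1}(Y)\otimes\mathbb R$ and $\pi_{k+1}(Y)$ is a fixed finitely generated group, one can choose $\alpha_0\in\pi_{k+1}(Y)$ agreeing with $\alpha$ on $V_{k+1}\cap U_0$ and with coefficients $O(L^{k+1})$ in the chosen generating set of $\pi_{k+1}(Y)\cap\Lambda_0$; then $\alpha-\alpha_0\in\Lambda_1$, and the estimate for $v\in U_1$ together with the bound on $\alpha_0$ gives $\langle v,\alpha-\alpha_0\rangle=O(L^{k+2})$, so one splits off a bounded $\alpha_1\in\pi_{k+1}(Y)\cap\Lambda_1$, and so on. The number of steps is at most $\dim Y$, so the constants stay uniform.

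For the pairing estimate I would argue as the Hopf-type invariants are computed in \S\ref{S4}: express $\langle v,\alpha\rangle$ by Stokes' theorem in terms of data that is already under control. Recall that on a $k$-simplex $q\subset\partial p$ the value $\langle b,q\rangle$ lies within $O(1)$ of $\int_q\beta_k|_{V_{k+1}}$, with $\beta_k=\int_0^1\Phi_k$, and that $[g_{k+1}|_p]-[g_k|_p]=\langle\delta b,p\rangle$ independently of how $H_{k+1}$ is extended over $p\times[0,1]$. Hence for $v\in V_{k+1}$,
\[
\langle v,\alpha\rangle=\big\langle v,[g_k|_p]-[u]\big\rangle+\sum_{q\subset\partial p}\pm\langle v,\langle b,q\rangle\rangle
=\big\langle v,[g_k|_p]-[u]\big\rangle+\int_p d\beta_k(v)+O(1),
\]
using $\sum_{q\subset\partial p}\pm\int_q=\int_p d$ and $d\beta_k(v)=\ph(v)-g_k^*m_Y(v)-\int_0^1\Phi_k(dv)$. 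The only a priori unbounded ingredients are $g_k$ and $\Phi_k$ on the interior of the top cell $p$, where no size bound has been imposed; the crux of the computation is that when $\langle v,[g_k|_p]-[u]\rangle$ is written out as a relative obstruction integral over $p$ in the style of Proposition \ref{extExist} (using that the $\mathcal M_Y^*(k)$-restrictions of $(g_k|_p)^*m_Y$ and $u^*m_Y$ are formally homotopic rel $\partial p$, essentially uniquely, since $\pi_{k+1}(Y_{(k)})=0$), the term $\int_p g_k^*m_Y(v)$ together with the uncontrolled portion of the interpolating homotopy cancel against the corresponding pieces of $\int_p d\beta_k(v)$. What survives is built only from the controlled data — the efficient extension $u$, the restriction $\Phi_k|_{\mathcal M_Y^*(k)}$, and $\ph$ — because $dv$ is decomposable with all tensor factors in degree $\le k$, so $\Phi_k(dv)$ and the primitive corrections (chosen via the coisoperimetric inequality, Lemma \ref{coIP}) only ever touch $\Phi_k|_{\mathcal M_Y^*(k)}$.

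What remains is the size bookkeeping, which runs along the lines of Proposition \ref{qLift}. The term $\int_p\ph(v)$ is $O((L+1)^{k+1+i})$ straight from $\Dil^U(\ph)\le L$; the Leibniz rule gives $dv\in U_{i-1}$, so its factors (indecomposables of degree $\le k$, total degree $k+2$, total depth $\le i-1$) are each controlled by the inductive bound $\Dil_1^U((\Phi_k|_{\mathcal M_Y^*(k)})|_{X^{(k)}})\le C(k,Y)(L+1)$, whence $\int_0^1\Phi_k(dv)$ and the $u$-side primitive corrections have $L^\infty$ norm $O((L+1)^{(k+2)+(i-1)})=O((L+1)^{k+1+i})$; the efficient extension $u$ is produced with $\Dil^U(u^*m_Y)=O(L)$ by inductive scalability of $Y$ through dimension $k$ (statement $(b_{n-1})$ and the distortion bound of Theorem \ref{props}(d) for $Y^{(k+1)}$). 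Integrating these forms over the bounded-volume simplex $p$ does not change the order, giving the claimed estimate. The main obstacle is precisely the cancellation flagged above: neither $g_k$ on the interiors of the top cells nor the auxiliary cochain $b$ is individually bounded, so one has to organize the bookkeeping — as in \cite[\S4]{PCDF} — so that every genuinely unbounded quantity disappears and only $\ph$, $f$, the $u$'s, and $\Phi_k|_{\mathcal M_Y^*(k)}$ survive as inputs, and to match the geometric gluing obstruction $\alpha$ with the algebraic pairing $\langle v,\cdot\rangle$ without losing track of the depth grading.
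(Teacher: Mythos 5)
Your proposal is correct and follows essentially the same route as the paper, which simply invokes \cite[Lemma 4--2]{PCDF} together with Proposition~\ref{qLift}: you have unpacked exactly that argument, reducing the claim to the bound $|\langle v,\alpha\rangle|=O(L^{k+1+i})$ for $v\in V_{k+1}\cap U_i$ (the content of Proposition~\ref{qLift}), obtaining it by the same Stokes/obstruction-theoretic cancellation of the uncontrolled interior data, and then peeling off the $\alpha_i$ along the depth filtration. The cancellation step you flag as the main obstacle is precisely what the cited \cite{PCDF} bookkeeping supplies, so deferring to it there matches the paper's own level of detail.
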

  In other words, it is contained in a subset of $\pi_{k+1}(Y)$ whose elements, by Lemma \ref{ref-dist}, can be represented by $C(k,Y)(L+1)$-Lipschitz map.  The proof is exactly that of Lemma~4--2 in~\cite{PCDF}, except that Proposition~\ref{qLift} (instead of Proposition 3--9 of that paper) is used to give a bound.

  After fixing $g_{k+1}|_p$ for each $(k+1)$-cell $p$, we can extend $H_{k+1}$ to higher-dimensional cells arbitrarily.  The final task is to build a second-order homotopy from $\Phi_k$ to a homotopy $\Phi_{k+1}$ from $\ph$ to $g_{k+1}$ such that
  \[\Dil_1^U((\Phi_{k+1}|_{\mathcal{M}_Y^*(k+1)})|_{X^{(k+1)}})\leq C(k+1,Y)(L+1).\]
  Intuitively, this can be done since $\Phi_k|_{V_{k+1}}$ and $H_{k+1}^*|_{V_{k+1}}$ have, by construction, very similar integrals over $(k+1)$-cells; hence the obstruction to constructing such a homotopy is easy to kill.  The details are once again the same as in the proof of the shadowing principle in \cite{PCDF}.
\end{proof}

\bibliographystyle{amsalpha}
\bibliography{liphom}

\providecommand{\bysame}{\leavevmode\hbox to3em{\hrulefill}\thinspace}
\providecommand{\MR}{\relax\ifhmode\unskip\space\fi MR }
\providecommand{\MRhref}[2]{%
  \href{http://www.ams.org/mathscinet-getitem?mr=#1}{#2}
}
\providecommand{\href}[2]{#2}
\begin{thebibliography}{CDMW}

\bibitem[Berd]{Berd}
A.~Berdnikov, \emph{Lipschitz null-homotopy of mappings {$S^3 \to S^2$}}, arXiv
  preprint arXiv:1811.02606 (2018).

\bibitem[CDMW]{CDMW}
G.~R. Chambers, D.~Dotterrer, F.~Manin, and Sh. Weinberger, \emph{Quantitative
  null-cobordism}, J.\ Amer.\ Math.\ Soc. \textbf{31} (2018), no.~4,
  1165--1203.

\bibitem[CMW]{CMW}
G.~R. Chambers, F.~Manin, and Sh. Weinberger, \emph{Quantitative nullhomotopy
  and rational homotopy type}, Geometric and Functional Analysis (GAFA)
  \textbf{28} (2018), no.~3, 563--588.

\bibitem[DGMS]{DGMS}
Pierre Deligne, Phillip Griffiths, John Morgan, and Dennis Sullivan, \emph{Real
  homotopy theory of {K\"{a}}hler manifolds}, Invent. Math. \textbf{29} (1975),
  no.~3, 245--274.

\bibitem[Fed]{Fed}
H.~Federer, \emph{Geometric measure theory}, Die Grundlehren der mathematischen
  Wissenschaften, vol. 153, Springer, 1969.

\bibitem[FHT]{FHT}
Y.~F{\'e}lix, S.~Halperin, and J.-C. Thomas, \emph{Rational homotopy theory},
  Graduate Texts in Mathematics, vol. 205, Springer, 2012.

\bibitem[FW]{FWPNAS}
Steve Ferry and Shmuel Weinberger, \emph{Quantitative algebraic topology and
  lipschitz homotopy}, Proceedings of the National Academy of Sciences
  \textbf{110} (2013), no.~48, 19246--19250.

\bibitem[GKS]{GKS}
V.M. Gol'dshtein, V.I. Kuz'minov, and I.A. Shvedov, \emph{Differential forms on
  {L}ipschitz manifolds}, Siberian Mathematical Journal \textbf{23} (1982),
  no.~2, 151--161.

\bibitem[GrMo]{GrMo}
Ph.~A. Griffiths and J.~W. Morgan, \emph{Rational homotopy theory and
  differential forms}, Birkh{\"a}user, 1981.

\bibitem[Gro78]{GrHED}
M.~Gromov, \emph{Homotopical effects of dilatation}, J. Differential Geometry
  \textbf{13} (1978), no.~3, 303--310.

\bibitem[Gro99a]{GrMS}
\bysame, \emph{Metric structures for {R}iemannian and non-{R}iemannian spaces},
  Progress in Mathematics, vol. 152, Birkh\"{a}user Boston, Inc., Boston, MA,
  1999, with appendices by M. Katz, P. Pansu and S. Semmes, translated from the
  French by Sean Michael Bates.

\bibitem[Gro99b]{GrQHT}
\bysame, \emph{Quantitative homotopy theory}, Prospects in mathematics
  ({P}rinceton, {NJ}, 1996), Amer. Math. Soc., Providence, RI, 1999,
  pp.~45--49.

\bibitem[Guth]{Guth}
L.~Guth, \emph{Recent progress in quantitative topology}, Surveys in
  {D}ifferential {G}eometry 2017. {C}elebrating the 50th anniversary of the
  {J}ournal of {D}ifferential {G}eometry, Surv. Differ. Geom., vol.~22, Int.
  Press, Somerville, MA, 2018, pp.~191--216.

\bibitem[HaSt]{HaSt}
Stephen Halperin and James Stasheff, \emph{Obstructions to homotopy
  equivalences}, Advances in mathematics \textbf{32} (1979), no.~3, 233--279.

\bibitem[Kot]{Kot}
D.~Kotschick, \emph{On products of harmonic forms}, Duke Mathematical Journal
  \textbf{107} (2001), no.~3, 521--531.

\bibitem[Man19]{PCDF}
F.~Manin, \emph{Plato's cave and differential forms}, Geometry \& Topology
  \textbf{23} (2019), no.~6, 3141--3202.

\bibitem[Man20]{zoo}
\bysame, \emph{A zoo of growth functions of mapping class sets}, J. Topol.
  Anal. \textbf{12} (2020), no.~3, 841--855.

\bibitem[Man22]{PWSM}
\bysame, \emph{Positive weights and self-maps}, Proc. Amer. Math. Soc. (to
  appear).

\bibitem[MW]{IRMC}
F.~Manin and Sh. Weinberger, \emph{Integral and rational mapping classes}, Duke
  Math. J. \textbf{169} (2020), no.~10, 1943--1969.

\bibitem[Shiga]{Shiga}
H.~Shiga, \emph{Rational homotopy type and self maps}, Journal of the
  Mathematical Society of Japan \textbf{31} (1979), no.~3, 427--434.

\bibitem[Sul]{SulLong}
D.~Sullivan, \emph{Infinitesimal computations in topology}, Inst. Hautes
  \'{E}tudes Sci. Publ. Math. (1977), no.~47, 269--331 (1978).

\bibitem[Wen]{Wenger}
S.~Wenger, \emph{Nilpotent groups without exactly polynomial {D}ehn function},
  Journal of Topology \textbf{4} (2011), no.~1, 141--160.

\bibitem[Whit]{GIT}
Hassler Whitney, \emph{Geometric integration theory}, Princeton Mathematical
  Series, vol.~21, Princeton University Press, 1957.

\end{thebibliography}

\end{document}